\theoremstyle{plain}
    \newtheorem{thm}{Theorem}[section]
    \newtheorem{claim}[thm]{Claim}
     \newtheorem{defi}[thm]{Definition}
    \newtheorem{coro}[thm]{Corollary}
    \newtheorem{lem}[thm]{Lemma}
    \newtheorem{pro}[thm]{Proposition}
    \newtheorem{remark}[thm]{Remark}
\newcommand{\Rmnum}[1]{\expandafter\@slowromancap\romannumeral #1@}
\begin{document}

\bibliographystyle{plain}
\title[Totally invariant divisors of int-amplified endomorphisms]
{Totally invariant divisors of int-amplified endomorphisms of normal projective varieties
}

\author{Guolei Zhong}
\address
{
\textsc{Department of Mathematics} \endgraf
\textsc{National University of Singapore,
Singapore 119076, Republic of Singapore
}}
\email{zhongguolei@u.nus.edu}

\begin{abstract}
We consider an arbitrary int-amplified surjective endomorphism $f$ of a normal projective variety $X$ over $\mathbb{C}$ and its $f^{-1}$-stable prime divisors. We  extend the early result in  \cite[Theorem 1.3]{zhang2014invariant} for the case of polarized endomorphisms to the case of int-amplified endomorphisms.

Assume further that $X$ has at worst Kawamata log terminal singularities. We prove that the total number of  $f^{-1}$-stable prime divisors has an optimal upper bound $\dim X+\rho(X)$, where $\rho(X)$ is the Picard number. Also, we  give a sufficient condition for $X$ to be rationally connected and simply connected. Finally, by running the minimal model program (MMP), we prove that, under some extra conditions, the end product of the MMP can only be an elliptic curve or a single point.
\end{abstract}
\subjclass[2010]{
14E30,   
32H50, 
08A35,  
}

\keywords{ int-amplified endomorphism,  minimal model program, rationally connected variety}

\maketitle

\tableofcontents

\section{Introduction}
~~~~We work over the complex numbers field $\mathbb{C}$. This article  generalizes the early result of \cite[Theorem 1.3]{zhang2014invariant}. We  extend the result for the case of polarized endomorphisms to the case of int-amplified endomorphisms. The main method we use is to run the minimal model program (MMP) equivariantly so that we can reduce the dimension of varieties and then use the induction. It is accessible under some good conditions for the variety $X$ (cf.~\cite[Theorem 1.10]{meng2017building}). See Theorem \ref{thm1.10} for a detailed description.

Suppose $X$ is a projective variety. Let $f: X\rightarrow X$ be a surjective endomorphism. The endomorphism $f$ is said to be \textit{int-amplified}, if there exist ample Cartier divisors $H$ and $L$ such that $f^*L-L=H$. A prime divisor $V$ on $X$ is said to be \textit{totally invariant} under the endomorphism $f$, if $f^{-1}(V)=V$ set-theoretically. To simplify the expression, we first introduce a notation for the set of totally invariant prime divisors for the int-amplified endomorphism $f:X\rightarrow X$:
$$\textup{TI}_f(X)=\{V~|~V~\textup{is a prime divisor on }X~\textup{such that }f^{-1}(V)=V\}.$$

In this article, we will bound the cardinality $c:=\#\textup{TI}_f(X)$. It turns out that there exists an optimal upper bound which is determined by the Picard number and dimension of $X$. Also, we want to give a sufficient condition for  $f^*|_{\textup{Pic}(X)}$ to be diagonalizable over $\mathbb{Q}$ by applying some early results. Moreover, under some extra conditions, we show that the variety $X$ is rationally connected and simply connected with respect to complex topology.

The following is our main result.

\begin{thm}\label{thm1.1}
Let $X$ be a projective variety of dimension $n$ with only $\mathbb{Q}$-factorial Kawamata log terminal singularities, and $f:X\rightarrow X$ an int-amplified endomorphism. Let $V_j~(1\le j\le c)$ be all the prime divisors in $\textup{TI}_f(X)$. Then we have \textup{(}with $\rho:=\rho(X)$\textup{)}:
\begin{enumerate}
\item[\textup{(1)}] $c\le n+\rho$.  Furthermore, if $c\ge 1$, then the pair $(X,\sum V_j)$ is log canonical and $X$ is uniruled.
\item[\textup{(2)}] Suppose  $c\ge n+\rho-2$. Then either $X$ is rationally connected and simply connected with respect to complex topology, or there is a fibration $X\rightarrow E$ onto an elliptic curve $E$ such that every fibre is normal, irreducible, equi-dimensional and rationally connected. Further, in the latter case, for some integer $t\ge 1$, $f^t$ descends to an int-amplified endomorphism $g_E:E\rightarrow E$. In both cases, $(f^t)^*|_{\textup{NS}_{\mathbb{Q}}(X)}$ is diagonalizable over $\mathbb{Q}$.
\item[\textup{(3)}] Suppose  $c\ge n+\rho-1$. Then $X$ is rationally connected and simply connected with respect to complex topology. Further, for some integer $t\ge 1$, $(f^t)^*|_{\textup{Pic}(X)}$ is diagonalizable over $\mathbb{Q}$.
\item[\textup{(4)}] Suppose  $c\ge n+\rho$. Then $c=n+\rho$, $K_X+\sum_{j=1}^{n+\rho}V_j\sim_{\mathbb{Q}}0$ and $f$ is \'etale outside $(\bigcup V_j)\cup f^{-1}(\textup{Sing}\,X)$.
\end{enumerate}
\end{thm}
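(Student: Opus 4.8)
The engine of the proof is one ramification identity together with the spectral positivity of int‑amplified pullbacks, fed into an induction on $\dim X$ via the equivariant minimal model program of Theorem \ref{thm1.10}.

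\smallskip
\emph{Set‑up and statement (1).} Put $D=\sum_{j=1}^{c}V_j$. As $X$ is $\mathbb Q$‑factorial klt it is smooth in codimension one, so the ramification divisor $R_f\ge 0$ with $K_X=f^*K_X+R_f$ is defined; since $f^{-1}(V_j)=V_j$ set‑theoretically we have $f^*V_j=d_jV_j$ with integers $d_j\ge 2$ (the $[V_j]$ are eigenvectors of $f^*$ and every eigenvalue of $f^*$ on $N^1(X)_{\mathbb R}$ has modulus $>1$ because $f$ is int‑amplified). Subtracting gives
\[
(K_X+D)-f^*(K_X+D)=R_f-\textstyle\sum_j(d_j-1)V_j=:R'\ge 0,
\]
an effective $\mathbb Q$‑divisor no component of which lies in $D$; note $f$ is unramified in codimension one off $D\cup f^{-1}(\textup{Sing}\,X)$ exactly when $R'=0$. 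Since $\textup{id}-f^*$ is invertible on $N^1(X)_{\mathbb R}$ and $(f^*)^{-1}=\tfrac1{\deg f}f_*$ preserves the effective cone, $-(K_X+D)=\sum_{k\ge1}(f^*)^{-k}R'$ is a geometrically convergent sum of effective classes, hence $-(K_X+D)$ is pseudoeffective once $c\ge 1$. A discrepancy computation with this identity iterated, using that a non‑lc locus would be a totally invariant closed subset incompatible with int‑amplifiedness, shows $(X,\sum V_j)$ is log canonical, while uniruledness of $X$ is part of the structure theory of int‑amplified endomorphisms (Theorem \ref{thm1.10}).

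\smallskip
\emph{The bound $c\le n+\rho$ and the equality case (4).} Induct on $n=\dim X$; for $n=1$ this is the classification of curves carrying an int‑amplified self‑map ($\mathbb P^1$ with $c\le 2$, or an elliptic curve with $c=0$). For $n\ge 2$, after replacing $f$ by a power run the $f$‑equivariant MMP $X=X_0\dashrightarrow\cdots\dashrightarrow X_r=Y$ ending in a Mori fibre contraction $\psi\colon Y\to Z$ (Theorem \ref{thm1.10}). Track $\delta_i:=\dim X_i+\rho(X_i)-c(X_i)$: a flip changes nothing, and a divisorial contraction of $E$ lowers $\rho$ by $1$ while lowering $c$ by $0$ or $1$ (the totally invariant divisors other than $E$ push forward to distinct totally invariant divisors), so $\delta$ never increases downward and $\delta(X)\ge\delta(Y)$. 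It then suffices to prove $\delta(Y)\ge 0$: the totally invariant divisors of $Y$ that are vertical over $Z$ are dominated via $\psi$ by $\textup{TI}_h(Z)$ (for the descended endomorphism $h$ of $Z$) together with the $\psi$‑exceptional ones, hence are at most $\dim Z+\rho(Z)$ many by induction, whereas the horizontal ones cut out on a general fibre $F$ (which carries an int‑amplified self‑map over a periodic point of $h$) totally invariant divisors of $F$, at most $\dim F+\rho(F)$ many by induction; assembling these with $\dim Y=\dim Z+\dim F$ and $\rho(Y)=\rho(Z)+\rho(Y/Z)$ yields $\delta(Y)\ge 0$. When $c=n+\rho$, equality holds at every stage: the MMP uses only flips and divisorial contractions of single $V_j$'s and the Mori fibre space contributes nothing extra, so chasing $K+\text{boundary}$ back up the tower gives $K_X+\sum V_j\sim_{\mathbb Q}0$; then $R'\ge 0$ is $\mathbb Q$‑linearly trivial, so $R'=0$ and, by purity of the branch locus on the smooth locus, $f$ is étale outside $(\bigcup V_j)\cup f^{-1}(\textup{Sing}\,X)$. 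This proves (1) and (4).

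\smallskip
\emph{Rational connectedness, the near‑maximal cases (2),(3), and diagonalizability.} After replacing $f$ by a power, the MRC fibration of $X$ is $f$‑equivariant with base $B$ containing no rational curves and inheriting an int‑amplified endomorphism; by the structure theory $B$ is, up to a further equivariant MMP, assembled from abelian varieties, and since an int‑amplified self‑map of an abelian variety has no totally invariant prime divisor, the $V_j$ are vertical over $B$ up to a bounded correction. The counting of the previous paragraph then forces: if $c\ge n+\rho-2$ then $\dim B\le 1$, so $B$ is a point — whence $X$ is rationally connected, hence simply connected by Campana and Kollár–Miyaoka–Mori — or $B$ is an elliptic curve $E$, in which case the general fibre is rationally connected, normality and equidimensionality of all fibres follow from equivariance (the branch locus of each $f^t$ over $E$ is totally invariant, which a multiplication‑type map of $E$ cannot sustain unless it is empty), and $f^t$ descends to an int‑amplified $g_E\colon E\to E$. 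If $c\ge n+\rho-1$, one extra unit in the count rules out the elliptic alternative, so $X$ is rationally connected and simply connected and $\textup{Pic}(X)$ is torsion‑free. In all cases the $[V_j]$ are $(f^t)^*$‑eigenvectors, and when $c$ is this large they span $\textup{NS}_{\mathbb Q}(X)$ (respectively $\textup{Pic}(X)\otimes\mathbb Q$), so a subset of them is an eigenbasis and $(f^t)^*$ is diagonalizable over $\mathbb Q$.

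\smallskip
\emph{Main obstacle.} The crux is the bookkeeping on the Mori fibre space $\psi\colon Y\to Z$: relating $c(Y)$ to $c(F)$ and $c(Z)$ means controlling $\psi$‑exceptional and multiple fibres, monodromy permuting the components of $V_j\cap F$, and above all the gap between $\rho(F)$ and $\rho(Y/Z)=\rho(Y)-\rho(Z)$; keeping $\delta$ nonnegative through these comparisons, and tracking precisely how equality propagates to extract $K_X+\sum V_j\sim_{\mathbb Q}0$, is where the real work lies.
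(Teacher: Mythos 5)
Your architecture (ramification identity plus equivariant MMP plus induction on dimension) is the same as the paper's, and several pieces are essentially right: the base case $\dim X = 1$, the accounting for divisorial contractions and flips (exactly the paper's Lemma \ref{invariant}), and the telescoping argument showing $-(K_X+D)$ is pseudoeffective. But the step where you handle the Mori fibre contraction has a genuine gap. You bound the ``horizontal'' totally invariant divisors of $Y$ by $\dim F+\rho(F)$ by invoking the inductive hypothesis on a general fibre $F$. This is not strong enough: a Mori fibre space has $\rho(Y/Z)=1$, but the general fibre can satisfy $\rho(F)>1$ (for example a $\mathbb{P}^1\times\mathbb{P}^1$-bundle whose monodromy swaps the rulings). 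Your assembly $\dim Z+\rho(Z)+\dim F+\rho(F)\le \dim Y+\rho(Y)=\dim Y+\rho(Z)+1$ then forces $\rho(F)\le 1$, which fails. The paper avoids this by observing that the horizontal divisors restrict to \emph{ample} divisors on $F$ (because $\rho(X_r/Y_0)=1$), that the restricted endomorphism is \emph{polarized} (Claim \ref{claim000}), and that a polarized endomorphism admits at most $\dim F+1$ totally invariant ample divisors — this is Proposition \ref{promm100}, proved separately by a dimension-reduction chain of normalizations down to surfaces. That proposition, giving the sharp bound $\dim F+1$ rather than $\dim F+\rho(F)$, is the missing key lemma in your argument.

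A second, smaller gap is in the diagonalizability in parts (2)–(3). You assert that when $c\ge n+\rho-2$ (or $-1$) the classes $[V_j]$ span $\textup{NS}_{\mathbb{Q}}(X)$ and therefore furnish an eigenbasis; neither claim is justified (there are $n+\rho$ of them in a $\rho$-dimensional space and they may not span, since several can be proportional). The paper instead uses Theorem \ref{thm1.10}(3): $g_0^*$ on $\textup{N}^1(X)$ is diagonalizable over $\mathbb{C}$ if and only if the induced map on the end product $Y$ of the MMP is, and since the induction forces $Y$ to be a point or an elliptic curve, one then checks that each eigenvalue arises from a $\rho=1$ quotient $\textup{Pic}(X_i)/u_i^*\textup{Pic}(X_{i+1})$, hence is a rational (in fact positive integral) number. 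This MMP-decomposition argument, not spanning by the $V_j$, is what yields rationality of the eigenvalues and diagonalizability over $\mathbb{Q}$. You would also need the paper's technical Claim \ref{keyclaim}, which propagates the equality $\Delta(r)=0$ from the fibre and base back up the tower to prove $K_X+\sum V_j\sim_\mathbb{Q}0$ in case (4); your ``chasing $K+\text{boundary}$ back up the tower'' glosses over this, which requires a separate inductive argument using the Iitaka-dimension comparison on the graph of $\delta$ as in Lemma \ref{verykeylemma1}.
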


One direct result of our main theorem is as follows. We refer to \cite{brown2016geometric} for the toric pair.
\begin{coro}\label{coro1.2}
Suppose $X$ is a normal projective variety with only $\mathbb{Q}$-factorial Kawamata log terminal singularities, and $f:X\rightarrow X$ is an int-amplified endomorphism of $X$. If the cardinality $c$ of $\textup{TI}_f(X)$ satisfies $c=\rho(X)+\dim X$ (achieving the upper bound) as in Theorem \ref{thm1.1} (4), then $(X,\sum V_j)$ is a toric pair.
\end{coro}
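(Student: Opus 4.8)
The plan is to combine Theorem~\ref{thm1.1} with the geometric characterization of toric varieties via the complexity invariant of \cite{brown2016geometric}. First I would apply Theorem~\ref{thm1.1} to the given $X$ and $f$. The hypothesis $c=\rho(X)+\dim X$ means in particular $c\ge n+\rho-1$ and $c\ge n+\rho$, so part~(3) gives that $X$ is rationally connected, part~(1) gives that the pair $\bigl(X,\sum_{j=1}^{c}V_j\bigr)$ is log canonical, and part~(4) gives that $c=n+\rho$ and $K_X+\Delta\sim_{\mathbb{Q}}0$, where $\Delta:=\sum_{j=1}^{n+\rho}V_j$ is a reduced divisor with exactly $n+\rho$ distinct prime components. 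In particular $-(K_X+\Delta)$ is $\mathbb{Q}$-trivial, hence nef, so $(X,\Delta)$ satisfies the hypotheses of the toric characterization theorem.

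Next I would compute the complexity of $(X,\Delta)$. Recall it is $\dim X+\mathrm{rank}\,\mathrm{Cl}(X)-|\Delta|$, with $|\Delta|$ the sum of the coefficients of $\Delta$; here $|\Delta|=n+\rho$ since $\Delta$ is reduced with $n+\rho$ components. Because $X$ is rationally connected with Kawamata log terminal, hence rational, singularities, a resolution together with the Leray spectral sequence gives $H^1(X,\mathcal{O}_X)=0$, so $\mathrm{Pic}^0(X)=0$ and $\mathrm{Pic}(X)=\mathrm{NS}(X)$ has rank $\rho:=\rho(X)$; as $X$ is $\mathbb{Q}$-factorial we also have $\mathrm{Cl}(X)\otimes\mathbb{Q}=\mathrm{Pic}(X)\otimes\mathbb{Q}$, so $\mathrm{rank}\,\mathrm{Cl}(X)=\rho$. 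Hence the complexity of $(X,\Delta)$ equals $n+\rho-(n+\rho)=0$, which is well below the threshold in \cite{brown2016geometric}.

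Finally I would invoke the characterization: a projective log canonical pair $(X,\Delta)$ with $-(K_X+\Delta)$ nef and complexity less than the critical bound admits a reduced divisor $D\ge\lfloor\Delta\rfloor$ with $(X,D)$ a toric pair; in particular $X$ is a projective toric variety and $D$ is the sum of all torus-invariant prime divisors. Since $X$ is $\mathbb{Q}$-factorial its fan is simplicial, so $D$ has exactly $\dim X+\rho(X)=n+\rho$ components. But $\lfloor\Delta\rfloor=\Delta=\sum_{j=1}^{n+\rho}V_j$ already has $n+\rho$ prime components and $\sum V_j\le D$ with $D$ reduced, forcing $\sum V_j=D$. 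Therefore $(X,\sum V_j)$ is a toric pair, as claimed.

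The one point that requires care is matching the invariant appearing in the cited complexity theorem with the Picard number $\rho(X)$ used in the statement; this is exactly where rational connectedness (Theorem~\ref{thm1.1}(3), giving $H^1(X,\mathcal{O}_X)=0$) and $\mathbb{Q}$-factoriality enter, to identify $\mathrm{rank}\,\mathrm{Cl}(X)$ with $\rho(X)$ and, on the toric side, to count the rays of the fan. Once this identification is in place the argument is a direct consequence of Theorem~\ref{thm1.1} and of \cite{brown2016geometric}.
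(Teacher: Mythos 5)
Your argument is correct and is exactly the route taken in the paper: apply Theorem~\ref{thm1.1} to get a log canonical pair $(X,\sum V_j)$ with $K_X+\sum V_j\sim_{\mathbb{Q}}0$ and the right number of components, then observe that the complexity of \cite[Definition 1.1]{brown2016geometric} is zero and invoke \cite[Theorem 1.2]{brown2016geometric}. The paper states this in one sentence; you spell out the identification $\mathrm{rank}\,\mathrm{Cl}(X)=\rho(X)$ and the final ray count, but the underlying proof is the same.
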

Indeed, Corollary \ref{coro1.2} follows immediately from \cite[Theorem 1.2]{brown2016geometric} and our main theorem, in which case, the complexity \textup{(}cf.~\cite[Definition 1.1]{brown2016geometric}\textup{)} is zero. So $(X,\sum V_j)$ is a toric pair.

Let $\textup{N}^1(X)=\textup{NS}(X)\otimes_{\mathbb{Z}}\mathbb{R}$. Comparing with the case of polarized endomorphisms, we remove the hypothesis (cf.~\cite[Theorem 1.3]{zhang2014invariant}): either $f^*|_{\textup{N}^1(X)}=q~\textup{id}_{\textup{N}^1(X)}$, or $n\le 3$. In general, $f^*|_{\textup{N}^1(X)}$ may not be a scalar matrix even in the case of polarized endomorphisms (cf.~\cite[Example 7.1]{meng2018building}). In our case of int-amplified endomorphisms, the diagonalizable result holds even when $c=\rho(X)+\dim X-1$, extending \cite[Theorem 1.3]{zhang2014invariant}. Moreover, we add more details to the proof of \cite[Theorem 1.3 and Proposition 2.12]{zhang2014invariant}. 

For the organization of the paper, we begin with several preliminaries in Section \ref{section2}. Then we give a detailed proof for our Theorem \ref{thm1.1} in Section \ref{section3}.
\subsubsection*{\textbf{\textup{Acknowledgments}}}
The author would like to deeply thank Professor De-Qi Zhang for many inspiring ideas and discussions. Also, he would like to thank Doctor Sheng Meng for the result of running MMP  \cite{meng2017building} on int-amplified endomorphisms, and the referees for many constructive suggestions to improve the paper.

\section{Preliminaries}\label{section2}
\subsection{Notation and terminology}\label{section2.1}

Let $X$ be a $\mathbb{Q}$-factorial normal projective variety of dimension $n$ over $\mathbb{C}$. Let $D$ be a Cartier divisor on $X$, and $\mathcal{O}_X(D)$ its corresponding invertible sheaf. We often identify $D$ with $\mathcal{O}_X(D)$. Since $X$ is $\mathbb{Q}$-factorial, any Weil divisor $B$ on $X$ is $\mathbb{Q}$-Cartier, 
i.e.\,there exists some integer $m>0$, such that $mB$ is Cartier. We use $K_X$ to denote a canonical divisor of the variety $X$.

In correspondence with notations in \cite{hartshorne2013algebraic}, let $\textup{NS}(X)=\textup{Pic}(X)/\textup{Pic}^\circ(X)$ denote the N\'eron--Severi group of $X$. Let $\textup{NS}_{\mathbb{Q}}(X):=\textup{NS}(X)\otimes_{\mathbb{Z}}\mathbb{Q}$ and $\textup{N}^1(X):=\textup{NS}(X)\otimes_{\mathbb{Z}}\mathbb{R}$. Denote by $q(X)$  the irregularity $h^1(X,\mathcal{O}_X):=\dim \textup{H}^1(X,\mathcal{O}_X)$. 

Let $\textup{N}_{n-1}(X)$ denote the space of weakly numerically equivalent classes of Weil $\mathbb{R}$-divisors (cf.~\cite[Definition 2.2]{meng2018building}). When $X$ is normal, we can regard $\textup{N}^1(X)$ as a subspace of $\textup{N}_{n-1}(X)$ (cf.~\cite[Lemma 3.2]{zhang2016action}). 
    
    Let $f:X\rightarrow X$ be a finite surjective endomorphism. We can define the pullback of $r$-cycles for $f$, such that $f^*$ induces an automorphism of $\textup{N}_r(X)$ and $f_*f^*=(\deg f)~\textup{id}$. More precisely, the pullback $f^*:\textup{N}_r(X)\rightarrow \textup{N}_r(X)$ is defined by $f^*=(\deg f)~(f_*)^{-1}$. We recall the following  cones, which are $(f^*)^{\pm}$-invariant (cf.~\cite[Definition 2.4]{meng2018building}). 
    \begin{itemize}
\item $\textup{Amp}(X)$: the set of classes of ample $\mathbb{R}$-Cartier divisors in $\textup{N}^1(X)$;
\item $\textup{Nef}(X)$: the set of classes of nef $\mathbb{R}$-Cartier divisors in $\textup{N}^1(X)$;
\item $\textup{PE}(X)$: the closure of the set of classes of effective $(n-1)$-cycles with $\mathbb{R}$-coefficients in $\textup{N}_{n-1}(X)$.
\end{itemize}

The Theorem of the Base of N\'eron--Severi asserts that the real space $\textup{N}_1(X)$ of $1$-cycles with real coefficients modulo numerical equivalence is a finite dimensional $\mathbb{R}$-vector space. There is a natural perfect pairing:
$$\textup{N}_1(X)\times \textup{N}^1(X)\rightarrow \mathbb{R}.$$
Then $\textup{rank}_{\mathbb{R}}\textup{N}^1(X)=\textup{rank}_{\mathbb{R}}\textup{N}_1(X)=\rho(X)$, the \textit{Picard number} of $X$.

We use the symbol $\sim$ (resp.~$\sim_{\mathbb{Q}}$, $\equiv$ or $\equiv_w$) to denote the linear (resp.~$\mathbb{Q}$-linear, numerical or weak numerical) equivalence relation. By \cite[CH.~\Rmnum{13}, Theorem 4.6]{berthelot2006theorie}), we see that numerically equivalent divisors are algebraically equivalent up to a positive multiple.

We refer to \cite{nakayama2004zariski} for the definition and general properties of a pseudo-effective Weil divisor on $X$. Also, we refer to \cite[CH.2]{lazarsfeld2017positivity} for the definition and general properties of Kodaira dimension and Iitaka dimension of any line bundle over $X$. 

In addition, we now assume that $X$ has at worst Kawamata log terminal (klt) singularities. We refer to \cite[Definitions 2.28 and 2.34]{kollar2008birational} for the definition of discrepancies and different kinds of singularities. The next proposition about the inversion of adjunction (cf.~\cite[Theorem 1]{kawakita2007inversion}) is useful 
for our proof.
\begin{pro}[cf.~\cite{kawakita2007inversion}]\label{normallc}
Let $(X,S+B)$ be a log pair \textup{(}i.e.\,$K_X+S+B$ is $\mathbb{Q}$-Cartier and all the coefficients of irreducible components of $S+B$ are in $[0,1]$\textup{)} such that $S$ is a reduced divisor which has no common component with the support of $B$, let $\nu:S^{\nu}\rightarrow X$ denote the normalization of $S$, and let $B^{\nu}$ denote the different of $B$ on $S^{\nu}$ \textup{(}so that $K_{S^{\nu}}+B^{\nu}=\nu^*(K_X+S+B)$\textup{)}. Then $(X,S+B)$ is log canonical near $S$ if and only if $(S^{\nu},B^{\nu})$ is log canonical.
\end{pro}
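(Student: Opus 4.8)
The plan is to prove the equivalence by establishing its two implications separately. The forward direction --- if $(X,S+B)$ is log canonical near $S$ then $(S^\nu,B^\nu)$ is log canonical --- is the adjunction statement and amounts to bookkeeping with the different; the reverse direction is the genuine inversion-of-adjunction assertion and carries all the difficulty. In both cases I would fix a point $x\in S$ and take a log resolution $g\colon Y\rightarrow X$ of $(X,S+B)$ whose strict transform $\widetilde S$ of $S$ is smooth, such that $g|_{\widetilde S}$ factors through the normalization as $\pi\colon\widetilde S\rightarrow S^\nu$ with $\pi$ itself a log resolution of $(S^\nu,B^\nu)$, and write $K_Y+\widetilde S+\sum_i a_iE_i=g^*(K_X+S+B)$, the $E_i$ being the remaining exceptional and strict-transform components.

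For the forward direction I would restrict to $\widetilde S$. Ordinary adjunction on the smooth variety $Y$ gives $K_{\widetilde S}=(K_Y+\widetilde S)|_{\widetilde S}$, while by definition of the different $K_{S^\nu}+B^\nu=\nu^*(K_X+S+B)$. Comparing the two after pulling back to $\widetilde S$, the divisors $E_i|_{\widetilde S}$ account, through the standard combinatorics of the different (cf.\ the treatment of adjunction in \cite{kollar2008birational}), for precisely the exceptional and boundary divisors of $\pi$ over $(S^\nu,B^\nu)$, each with coefficient no worse than the corresponding $a_i$; since every divisorial valuation over $(S^\nu,B^\nu)$ is captured this way on a suitable $g$, the inequalities $a_i\ge -1$ (log canonicity of $(X,S+B)$ near $S$) force every discrepancy of $(S^\nu,B^\nu)$ to be $\ge -1$.

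For the reverse direction I would argue by contradiction. Assume $(S^\nu,B^\nu)$ is log canonical but $(X,S+B)$ is not log canonical near $x$; extracting a bad valuation on $g$ we may take $a_j<-1$ for some $E_j$ with $x\in g(E_j)$. With $\Delta:=\widetilde S+\sum_i a_iE_i$, apply the Koll\'ar--Shokurov connectedness lemma to the birational morphism $g$ over $x$: the non-klt locus of $(Y,\Delta)$ meets the fibre $g^{-1}(x)$ in a connected set, and it contains both $\widetilde S$ (as $g(\widetilde S)=S\ni x$) and $E_j$. A further blow-up together with a tie-breaking perturbation of the boundary, removing the auxiliary coefficient-one components from the non-klt locus, then forces a component of coefficient $>1$ to meet $\widetilde S$; intersecting with $\widetilde S$ and running the different/adjunction bookkeeping of the previous paragraph in reverse produces a divisorial valuation over $(S^\nu,B^\nu)$ of discrepancy $<-1$, a contradiction.

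I expect the non-normality of $S$ to be the real obstacle: when $S$ is not normal its conductor contributes to $B^\nu$, and one must check that the bad divisor produced on $\widetilde S$ descends to an honest non-log-canonical place of $(S^\nu,B^\nu)$ rather than being swallowed by the conductor locus. Making this precise needs the more delicate analysis of \cite{kawakita2007inversion} --- working with $S^\nu$ from the start, tracking the conductor in the adjunction formula, and inducting on $\dim X$ --- and the connectedness lemma itself rests on Kawamata--Viehweg vanishing, which is where the characteristic-zero hypothesis enters. I would therefore carry out the normal case along the lines above and invoke \cite{kawakita2007inversion} for the general statement, exactly as the proposition does.
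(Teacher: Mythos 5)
The paper does not prove this proposition; it is stated as Kawakita's theorem and cited directly to \cite{kawakita2007inversion}, so there is no argument in the paper to compare against. As a standalone sketch, your forward (adjunction) direction is fine, and the reverse-direction plan via the Koll\'ar--Shokurov connectedness lemma with tie-breaking is the classical route to inversion of adjunction. Two caveats. First, a sign slip: with your convention $K_Y+\widetilde S+\sum_i a_iE_i=g^*(K_X+S+B)$, log canonicity near $S$ means $a_i\le 1$ for all $E_i$ over $S$, so the ``bad'' divisor should satisfy $a_j>1$, not $a_j<-1$. Second, you locate the chief obstacle in the non-normality of $S$, but that is not quite where the real difficulty lies: the connectedness-plus-tie-breaking argument gives the plt (klt) form of inversion of adjunction, and upgrading that to the full log canonical statement is precisely the content of Kawakita's theorem, whose proof proceeds by an MMP-based induction rather than by the connectedness lemma. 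So the ``more delicate analysis'' you defer to at the end is not a refinement of your sketch but a genuinely different method. Since the paper itself offers only the citation, your decision to fall back on \cite{kawakita2007inversion} for the general statement is consistent with the paper's treatment.
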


Suppose $f: X\rightarrow X$ is a finite surjective endomorphism of a normal variety $X$. Since $X$ is normal, it is regular in codimension one. Then one can define the pullback of a Weil divisor $H$ on $X$, as the closure of $(f|_U)^*(H|_U)$ where $U\subseteq X$ is a smooth open locus on $X$ and $X\backslash U$ is a codimension $\ge 2$ closed subset. Furthermore, when $H$ is $\mathbb{Q}$-Cartier, the pullback we discussed above coincides with the usual pullback of $\mathbb{Q}$-Cartier divisor.
\setlength\parskip{8pt}

\subsection{Int-amplified endomorphisms}
~~~~In this subsection, we first recall the definitions of polarized, amplified and int-amplified endomorphisms. Then we refer to \cite{meng2017building} for the general properties of int-amplified endomorphisms.\setlength\parskip{0pt}
\begin{defi}\label{defiample}
\textup{Let $f: X\rightarrow X$ be a surjective endomorphism of a projective variety $X$. We say that
\begin{enumerate}
\item[\textup{(1)}] $f$ is \textit{polarized} if $f^* D\sim qD$ for some ample Cartier divisor $D$ and integer $q>1$;
\item[\textup{(2)}] $f$ is \textit{amplified} if $f^*D-D=H$ for some Cartier divisor $D$ and ample Cartier divisor $H$; and
\item[\textup{(3)}] $f$ is \textit{int-amplified} if $f^*D-D=H$ for some ample Cartier divisors $D$ and $H$.
\end{enumerate}}
\end{defi}
It follows from the definition that $(1)\Rightarrow (3)\Rightarrow (2)$. One can also check directly that if $f$ is an int-amplified endomorphism, then any power of $f$ is also int-amplified.

In what follows, we will recall the  theorem below (cf.~\cite[Theorem 1.10]{meng2017building}), which ensures that we can run equivariant MMP and then do the induction on the dimension of $X$. It extends the result of equivariant MMP (cf.~\cite[Theorem 1.8]{meng2018building}) for the case of  polarized endomorphisms. Recall that a normal projective variety $X$ is said to be \textit{$Q$-abelian} if there exists a finite surjective morphism $A\rightarrow X$ \'etale in codimension one (or \textit{quasi-\'etale} in short) with
$A$ an abelian variety.
\begin{thm}[cf.~\cite{meng2017building}]\label{thm1.10}
Let $f: X\rightarrow X$ be an int-amplified endomorphism of a $\mathbb{Q}$-factorial Kawamata log terminal projective variety $X$. Then replacing $f$ by a positive power, there exist a Q-abelian variety $Y$, a morphism $X\rightarrow Y$, and an $f$-equivariant relative minimal model program over $Y$
\[
\xymatrix{
X=X_0\ar@{-->}[r]&X_1\ar@{-->}[r]&\cdots\ar@{-->}[r]&X_i\ar@{-->}[r]&\cdots\ar@{-->}[r]&X_l\ar@{-->}[r]&X_{l+1}=Y
}
\]
which means a positive power of $f$ descends to an endomorphism $g_i$ on each $X_i$, for $1\le i\le l+1$, with every $u_i:X_i\dashrightarrow X_{i+1}$ a divisorial contraction, a flip or a Fano contraction over $Y$, of a $K_{X_i}$-negative extremal ray. Further, we have:
\begin{enumerate}
\item[\textup{(1)}] If $K_X$ is pseudo-effective, then $X=Y$ and it is Q-abelian.
\item[\textup{(2)}] If $K_X$ is not pseudo-effective, then for each $i$, $X_i\rightarrow Y$ is equi-dimensional and holomorphic with every fibre \textup{(}irreducible\textup{)} rationally connected and $g_i$ is int-amplified. The last rational map $X_l\dashrightarrow X_{l+1}=Y$ is a Fano contraction \textup{(}a morphism\textup{)}.
\item[\textup{(3)}] $f^*|_{\textup{N}^1(X)}$ is diagonalizable over $\mathbb{C}$ if and only if so is $g_{l+1}^*|_{\textup{N}^1(Y)}$.
\end{enumerate}
\end{thm}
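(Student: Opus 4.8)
The plan is to produce the required sequence one birational modification at a time: run a single step of an $f$-equivariant MMP, check that the descended endomorphism on the new variety is still int-amplified, iterate, and then analyse the end product $Y$ and deduce (1)--(3). Throughout, ``replacing $f$ by a positive power'' is used freely, which is harmless since any power of an int-amplified endomorphism is int-amplified; the hard part of the argument will be termination of the process.

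\emph{One equivariant step.} The endomorphism $f$ acts on the cone of curves $\overline{\textup{NE}}(X)$ and permutes its extremal rays; by Mori's bound on lengths of extremal rays there are only finitely many $K_X$-negative rays of bounded length, and the expanding action of $f^*$ on $\textup{Nef}(X)$ (recall $f^*L-L$ is ample for the defining ample divisor $L$) dually forces the relevant part of $\overline{\textup{NE}}(X)$ to be contracted towards its $K_X$-trivial boundary. Hence, after replacing $f$ by a power, a chosen $K_X$-negative extremal ray $R$ is $f$-invariant, so the contraction $u_0:X\to X'$ of $R$ (or the flip $X\dashrightarrow X^{+}$ when $R$ is small) is $f$-equivariant, and $f$ descends to a finite surjective endomorphism $g$ of the new variety by the universal property; $\mathbb{Q}$-factoriality and the klt property are preserved. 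The crucial point is that $g$ is again int-amplified: for a divisorial or a Fano contraction one pushes the relation $f^*L-L=H$ forward along $u_0$ and uses that $u_{0*}$ of an ample class stays ample on $X'$ (after correcting by the contracted divisor in the divisorial case) together with the projection formula; for a flip one transports the relation across the isomorphism in codimension one, ampleness of the relevant class being detected there. This is the int-amplified analogue of the standard descent lemmas, extending \cite[Theorem 1.8]{meng2018building} from the polarized case, and I would invoke these rather than redo the cone arithmetic.

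\emph{Termination and identification of $Y$.} A divisorial contraction drops $\rho(X)$ and a Fano contraction drops $\dim X$, so the only obstruction to termination is an infinite string of flips; this is the step I expect to be the main obstacle, since termination of flips is open in general. The way around it is to genuinely exploit the dynamics: the whole sequence $X_0\dashrightarrow X_1\dashrightarrow\cdots$ carries compatible int-amplified endomorphisms, which rigidly control the numerical position of $K_{X_i}$, and combined with the finiteness of the chamber decomposition of the movable cone under the $g_i^*$-actions (or a recursion through a fibration reducing the needed termination to smaller dimension) the flip sequence must stop. Once we reach a minimal model, there are two cases. If $K_X$ is pseudo-effective the process is trivial: by the known lemma that an int-amplified endomorphism of a klt projective variety forces $\kappa(K_X)\le 0$ (via equivariance of the Iitaka fibration) together with the equality $\kappa=\nu$ for such endomorphisms, one gets $K_X\equiv 0$, and then the structure theorem for klt varieties with a non-invertible endomorphism and numerically trivial canonical class (à la Nakayama--Zhang) gives $X=Y$ is $Q$-abelian, which is (1). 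If $K_X$ is not pseudo-effective, then $K_{X_i}$ is never pseudo-effective either, so the MMP cannot terminate with $K$ nef and must end at a genuine Fano contraction $X_l\dashrightarrow Y$; recursing on dimension, the base $Y$ carries an int-amplified endomorphism of a variety of smaller dimension and is $Q$-abelian by induction (the base cases $\dim\le 1$ being a point or an elliptic curve). The fibres of each $X_i\to Y$ are rationally connected because each step is a $K$-negative divisorial contraction, flip, or Fano contraction, all of which preserve ``rationally connected general fibre over $Y$'', and a variety fibred in rationally connected varieties over a $Q$-abelian base is precisely realising its MRC fibration; equi-dimensionality and holomorphy of $X_i\to Y$ are again dynamical, since $g_i^*$ stabilises (up to a scalar) the pullback of an ample class from $Y$, which precludes jumping of fibre dimension and upgrades the rational map to a morphism. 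This gives (2).

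\emph{Transfer of diagonalizability.} Each step of the MMP yields a short exact sequence of Néron--Severi groups compatible with the descended endomorphisms: for a divisorial contraction the unique contracted divisor $E$ satisfies $f^*E=mE$ (as divisors, since $f^{-1}(E)=E$), so $\textup{N}^1(X_i)=u_0^*\textup{N}^1(X_{i+1})\oplus\mathbb{R}[E]$ is an $f^*$-invariant decomposition; for a flip $\textup{N}^1(X_i)\cong\textup{N}^1(X_{i+1})$ canonically and $f^*$-equivariantly, using $\mathbb{Q}$-factoriality and the functorial construction of the flip; and for the Fano contraction $u:X_l\to Y$, a relative ample class produces an $f^*$-eigenvector splitting $\textup{N}^1(X_l)=u^*\textup{N}^1(Y)\oplus\mathbb{R}A'$. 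In all three cases the extra summand is one-dimensional with $g_i^*$ acting by a scalar, hence diagonalizable there, so $g_i^*|_{\textup{N}^1(X_i)}$ is diagonalizable over $\mathbb{C}$ if and only if $g_{i+1}^*|_{\textup{N}^1(X_{i+1})}$ is; chaining these equivalences along the MMP gives (3).
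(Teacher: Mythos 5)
This statement is not proved in the paper at all: it is quoted verbatim as Theorem 1.10 of Meng's paper \cite{meng2017building}, and the author only uses it as a black box. So the comparison can only be between your sketch and Meng's actual argument, whose architecture your proposal does broadly reproduce (equivariant single steps, descent of the int-amplified property, case split on pseudo-effectivity of $K_X$, induction on dimension, and the Néron--Severi splitting for (3)).

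That said, your sketch has genuine gaps precisely at the points where the theorem is hard. First, the termination of the equivariant MMP: you correctly identify an infinite string of flips as the obstruction, but the paragraph you offer in its place (``finiteness of the chamber decomposition of the movable cone under the $g_i^*$-actions, or a recursion through a fibration'') is not an argument. The actual resolution does not prove termination of arbitrary flips; it runs a \emph{special} MMP (with scaling, relative over the image of an equivariant MRC-type fibration) whose termination is guaranteed by BCHM because $K_X$ is not pseudo-effective on the relevant pieces, and the equivariance of \emph{that particular} MMP is secured by a finiteness theorem for contractible $K_X$-negative extremal rays on varieties admitting int-amplified endomorphisms. Your appeal to ``Mori's bound on lengths of extremal rays'' does not give finiteness of the set of $K_X$-negative rays (they are only locally discrete in the open half-space $K_X<0$ and may be infinite in number); the finiteness is itself a dynamical theorem that must be proved, not assumed. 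Second, the equi-dimensionality, holomorphy, and irreducibility of every fibre of $X_i\to Y$ in (2) is a substantive structure theorem about quasi-\'etale covers of the $Q$-abelian base $Y$; it does not follow from the remark that $g_i^*$ stabilises the pullback of an ample class ``up to a scalar,'' which at best controls numerical data of general fibres. Finally, in (3) the splitting $\textup{N}^1(X_l)=u^*\textup{N}^1(Y)\oplus\mathbb{R}A'$ for the Fano contraction requires producing an honest $g_l^*$-eigenvector lifting the one-dimensional quotient; a relatively ample class is only an eigenvector modulo $u^*\textup{N}^1(Y)$, and the correction term is solvable only after an argument comparing the quotient eigenvalue with the spectrum on $u^*\textup{N}^1(Y)$ (this is \cite[Lemma 9.2]{meng2018building}). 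The divisorial and flip cases of your splitting argument are fine.
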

To make our symbols symmetric, we may denote by $g_0$ some power $f^t$, so that $g_0$ descends to the int-amplified endomorphism $g_i$ of $X_i$ for each $i$. 
\begin{remark}
\textup{With the same symbols given above, we shall prove later that, for a divisorial contraction or a flip $X_i\dashrightarrow X_{i+1}$, we have the equality $\#\textup{TI}_{g_{i+1}}(X_{i+1})=\#\textup{TI}_{g_i}(X_i)-(\rho(X_i)-\rho(X_{i+1}))$ (cf.~Lemma \ref{invariant}), while for a Mori fibre contraction $X_r\rightarrow  X_{r+1}$, the inequality $\#\textup{TI}_{g_{r+1}}(X_{r+1})\ge\#\textup{TI}_{g_r}(X_r)-(\dim X_r-\dim X_{r+1})-1$ holds (cf.~Lemma \ref{invariant1}). Here, $\#\textup{TI}_{g_i}(X_i)$ denotes the cardinality of the set $\textup{TI}_{g_i}(X_i)$ of totally invariant divisors for the int-amplified endomorphism $g_i:X_i\rightarrow X_i$. }	
\end{remark}

When proving Theorem \ref{thm1.10}, the following statements for the case of int-amplified endomorphisms are useful. Lemma \ref{lem3.6} is a special case of  \cite[Lemma 3.5]{meng2017building} and Lemma \ref{prop3.3} was proved in \cite[Theorem 3.3]{meng2017building}.

\begin{lem}[cf.~\cite{meng2017building}]\label{lem3.6}
Let $\pi: X\rightarrow Y$ be a generically finite and surjective morphism of projective varieties. Suppose $f:X\rightarrow X$ and $g:Y\rightarrow Y$ are two surjective endomorphisms such that $g\circ \pi=\pi\circ f$. Then $f$ is int-amplified if and only if so is $g$.
\end{lem}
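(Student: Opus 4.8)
The plan is to reduce the statement to the Stein factorization of $\pi$ so that the two halves of the equivalence become independent and each follows from a standard projection-formula computation. First I would replace $\pi:X\to Y$ by its Stein factorization $X\xrightarrow{\,\sigma\,}Z\xrightarrow{\,\tau\,}Y$, where $\sigma$ has connected fibres and $\tau$ is finite; since $\pi$ is generically finite, $\sigma$ is in fact birational. The endomorphisms $f$ and $g$ should descend to an endomorphism $h:Z\to Z$ (the Stein factorization is canonical, so $f$ and $g$ induce compatible maps on $Z$), giving $\sigma\circ f=h\circ\sigma$ and $\tau\circ h=g\circ\tau$. Thus it suffices to treat two cases: $\pi$ birational, and $\pi$ finite surjective.

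For the \emph{finite surjective} case, I would use the characterization that $f$ is int-amplified iff every (equivalently, some) eigenvalue of $f^*|_{\mathrm{N}^1(X)}$ has modulus $>1$ — more precisely, by \cite[Theorem 1.1 / Proposition]{meng2017building} one knows $f$ is int-amplified iff $f^*$ has no eigenvalue on the unit circle, or the concrete definition via $f^*D-D$ ample. The key point is that $\pi^*:\mathrm{N}^1(Y)\to\mathrm{N}^1(X)$ is injective with image $(f^*)$-stable (because $g\pi=\pi f$ forces $f^*\pi^*=\pi^*g^*$), and $\pi_*\pi^*=(\deg\pi)\,\mathrm{id}$. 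So if $g$ is int-amplified, witnessed by ample $D,H$ on $Y$ with $g^*D-D=H$, then $\pi^*g^*D-\pi^*D=\pi^*H$, i.e.\ $f^*(\pi^*D)-\pi^*D=\pi^*H$ with $\pi^*D,\pi^*H$ ample (pullback of ample under a finite morphism is ample), so $f$ is int-amplified. Conversely, if $f$ is int-amplified with $f^*L-L=M$ for ample $L,M$ on $X$, I would push forward: set $D:=\pi_*L$ up to the factor $\deg\pi$ — more cleanly, use that $g^*$ acts on $\mathrm{N}^1(Y)\hookrightarrow\mathrm{N}^1(X)$ as the restriction of $f^*$, and an operator whose restriction to a stable subspace coming from an int-amplified operator can fail to be int-amplified only if it has a unit-circle eigenvalue; but $g^*$'s eigenvalues are a subset of $f^*$'s eigenvalues (via $\pi^*$), and $f$ int-amplified means all eigenvalues of $f^*$ have modulus $>1$, hence the same holds for $g^*$. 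This uses the spectral characterization of int-amplified endomorphisms from \cite{meng2017building} rather than producing $D,H$ by hand.

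For the \emph{birational} case, $\pi$ is a birational morphism of normal projective varieties with $f,g$ compatible. Here $\pi^*:\mathrm{N}^1(Y)\to\mathrm{N}^1(X)$ is again injective and $(f^*)$-equivariant, and for a $\pi$-nef divisor one has negativity-of-contraction type control; the cleanest route is once more the eigenvalue criterion: the characteristic polynomial of $g^*|_{\mathrm{N}^1(Y)}$ divides that of $f^*|_{\mathrm{N}^1(X)}$, so int-amplification (no eigenvalue of modulus $\le 1$, equivalently modulus $>1$ for all) transfers from $f$ to $g$; and conversely one can pull back an ample witness on $Y$, perturb by a $\pi$-ample divisor supported on the exceptional locus, and invoke that a sufficiently small such perturbation of an ample class stays ample on $X$ while remaining compatible with $f^*$ up to bounded error — then conclude by the spectral criterion applied on $X$.

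The main obstacle I anticipate is the direction ``$f$ int-amplified $\Rightarrow$ $g$ int-amplified'' done \emph{concretely}, i.e.\ exhibiting ample $D,H$ on $Y$ with $g^*D-D=H$: pushing forward ample divisors does not obviously yield ample divisors, and $\pi_*f^*L$ need not be a clean multiple of $g^*\pi_*L$ unless one is careful with the projection formula and the generic degree. For this reason the argument should be organized around the spectral/eigenvalue characterization of int-amplified endomorphisms established in \cite{meng2017building} (an endomorphism is int-amplified iff all eigenvalues of its action on $\mathrm{N}^1$ lie strictly outside the closed unit disc), together with the fact that $\pi^*$ conjugates $g^*$ onto the restriction of $f^*$ to an invariant subspace — from which both implications are immediate, since eigenvalues of a restriction to an invariant subspace are a subset of the eigenvalues of the whole operator, and eigenvalues of $f^*$ and $g^*$ match up through the injection $\pi^*$ on the image subspace while the remaining eigenvalues of $f^*$ (on a complement) are governed by the exceptional/vertical part and are likewise of modulus $>1$ by the same criterion applied to $f$.
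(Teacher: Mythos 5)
Your argument for ``$f$ int-amplified $\Rightarrow g$ int-amplified'' is correct and is the standard one: $\pi^*:\mathrm{N}^1(Y)\hookrightarrow\mathrm{N}^1(X)$ is injective (generic finiteness plus the projection formula), $f^*$-stable, and conjugates $g^*$ onto the restriction of $f^*$, so the eigenvalues of $g^*$ form a subset of those of $f^*$; by the spectral characterization of int-amplified endomorphisms in \cite{meng2017building} (all eigenvalues of $f^*|_{\mathrm{N}^1}$ of modulus $>1$) this gives $g$ int-amplified.

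The converse direction has a genuine gap that the Stein-factorization detour does not repair. Granting that $f$ descends to an $h$ on the Stein factor $Z$ (which does hold, by the universal property), your finite step $\tau:Z\to Y$ is fine: pull back the ample witness. But the birational step $\sigma:X\to Z$ is exactly where the difficulty sits. For $\sigma$ birational, $\sigma^*D$ is only nef and big, not ample, and the eigenvalues of $f^*$ on the complement of $\sigma^*\mathrm{N}^1(Z)$ --- the subspace spanned by $\sigma$-exceptional classes --- are not controlled by $h$ at all. The ``perturbation'' $L=\sigma^*D-\varepsilon E$ can be made ample, but that destroys the identity $f^*L-L=\sigma^*H$, since $f^*E-E$ carries no positivity, and ``compatible with $f^*$ up to bounded error'' is not a condition from which the spectral criterion can be read off. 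Your closing remark, that the exceptional eigenvalues ``are likewise of modulus $>1$ by the same criterion applied to $f$,'' is circular: that $f$ is int-amplified is precisely what you are trying to prove.

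The missing ingredient is a further equivalent characterization, also in \cite{meng2017building}: $f$ is int-amplified if and only if $f^*B-B$ is big for \emph{some} big $\mathbb{R}$-Cartier divisor $B$. With this the lemma needs no Stein factorization at all. If $g^*D-D=H$ with $D,H$ ample on $Y$, set $B:=\pi^*D$; since $\pi$ is generically finite and surjective, $B$ is nef with $(B)^{\dim X}=(\deg\pi)(D^{\dim Y})>0$, hence big, and likewise $f^*B-B=\pi^*(g^*D-D)=\pi^*H$ is big, so $f$ is int-amplified. Combined with your spectral argument for the forward direction, this recovers \cite[Lemma 3.5]{meng2017building}, which the paper invokes without proof.
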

\begin{lem}[cf.~\cite{meng2017building}]\label{prop3.3}
Let $f: X\rightarrow X$ be an int-amplified surjective endomorphism of a projective variety $X$. Then all the eigenvalues of $f^*|_{\textup{N}^1(X)}$ are of modulus greater than $1$.
\end{lem}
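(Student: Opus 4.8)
The plan is to argue by contradiction. Suppose $\phi:=f^{*}|_{\textup{N}^1(X)}$ has an eigenvalue of modulus $\le 1$, and split $\textup{N}^1(X)=W\oplus W'$ into the $\phi$‑invariant real subspaces spanned by the generalized eigenspaces for eigenvalues of modulus $\le 1$, respectively $>1$; thus $W\ne 0$. The structural inputs I would use are: $f$ is finite (since $f^{*}D=D+H$ is ample, $f$ contracts no curve); $\phi$ is invertible with $\phi^{-1}=\frac{1}{\deg f}f_{*}$; the cone $\textup{Nef}(X)$ is invariant under $\phi^{\pm 1}$ (recalled in the preliminaries), and the Mori cone $\overline{NE}(X)\subseteq\textup{N}_1(X)$ is invariant under $f^{*}$ and $f_{*}$ (pullback and pushforward of effective curves are effective); finally $\textup{Nef}(X)$ and $\overline{NE}(X)$ are closed, pointed, full‑dimensional cones because $X$ is projective, and $f_{*}|_{\textup{N}_1(X)}$ is invertible with inverse $\frac{1}{\deg f}f^{*}|_{\textup{N}_1(X)}$.

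First I would exploit the int‑amplified identity by a telescoping trick. Fixing ample Cartier divisors $D,H$ with $f^{*}D=D+H$, one has $\phi^{-i}H=\phi^{-(i-1)}D-\phi^{-i}D$, hence $\sum_{i=1}^{n}\phi^{-i}H=D-\phi^{-n}D$. Each $\phi^{-n}D=\frac{1}{(\deg f)^{n}}(f^{n})_{*}D$ and each $\phi^{-(n+1)}H$ is nef, so the partial sums form a sequence in $\textup{Nef}(X)$ that is increasing and bounded above by $D$ in the cone order; since $\textup{Nef}(X)$ is pointed, the sequence converges, i.e.\ $\sum_{i\ge 1}\phi^{-i}H$ converges. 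But $\phi^{-1}$ restricted to $W$ has all eigenvalues of modulus $\ge 1$, so $\sum_{i\ge 1}\phi^{-i}v$ diverges for every nonzero $v\in W$; therefore the $W$‑component of $H$ vanishes, i.e.\ $H\in W'$. Projecting $f^{*}D=D+H$ onto $W$ along $W'$ (the projection commutes with $\phi$) then gives $\phi(\pi_{W}D)=\pi_{W}D$. The same reasoning applies to every member of the open nonempty set $\mathcal{D}:=\textup{Amp}(X)\cap(f^{*}-\textup{id})^{-1}(\textup{Amp}(X))$ of int‑amplifying ample classes, so $\pi_{W}(\mathcal{D})$ is a nonempty open subset of $W$ contained in $\ker(\phi|_{W}-\textup{id})$; hence $\ker(\phi|_{W}-\textup{id})=W$. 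Consequently the only eigenvalue of $\phi$ of modulus $\le 1$ is $1$, and $\min\{|\lambda|:\lambda\in\operatorname{Spec}(\phi)\}=1$.

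To finish I would transpose to curves. The operator $f_{*}|_{\textup{N}_1(X)}$ has the same spectrum as $\phi$, and its inverse $\frac{1}{\deg f}f^{*}|_{\textup{N}_1(X)}$ preserves $\overline{NE}(X)$ with spectral radius $1/\min\{|\lambda|\}=1$; by the Perron–Frobenius theorem for operators preserving a pointed full cone there is a class $C_{0}\in\overline{NE}(X)\setminus\{0\}$ with $f_{*}C_{0}=C_{0}$. On one hand $H\cdot C_{0}>0$ by Kleiman's criterion, since $H$ is ample and $C_{0}$ is a nonzero pseudo‑effective curve class. On the other hand, $C_{0}$ lies in the sum of the generalized eigenspaces of $f_{*}|_{\textup{N}_1(X)}$ for eigenvalues of modulus $\le 1$, which is precisely the annihilator of $W'$ under the perfect pairing $\textup{N}_1(X)\times\textup{N}^1(X)\to\mathbb{R}$; as $H\in W'$ this forces $H\cdot C_{0}=0$, a contradiction. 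Hence $\phi$ has no eigenvalue of modulus $\le 1$, which is the assertion.

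I expect the main obstacle to be the passage from a single piece of positivity — the one ample class $H=f^{*}D-D$ — to control of all the eigenvalues of small modulus. Intersection‑theoretic positivity with $f^{*}D=D+H$ alone easily shows that the spectral radius of $\phi$ exceeds $1$ (pair with a Perron eigenvector of $f_{*}$ in $\overline{NE}(X)$ and use Kleiman's criterion), but it does not constrain the small eigenvalues; that genuinely requires the extra input that $\phi^{-1}=\frac{1}{\deg f}f_{*}$ preserves $\textup{Nef}(X)$, i.e.\ that the pushforward of a nef divisor class along a finite surjective morphism is again nef. The telescoping/convergence step and the rigidity it produces ($W=\ker(\phi-\textup{id})$, semisimple) are the technical heart of the argument, and it is only at the very end, in pairing with the $f_{*}$‑fixed pseudo‑effective curve class, that the ampleness of $H$ — as opposed to mere effectivity — is used.
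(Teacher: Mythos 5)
The paper does not supply a proof of this lemma; it cites \cite[Theorem~3.3]{meng2017building}, so a line-by-line comparison with the source is not possible, but your argument is correct. The telescoping identity $\sum_{i=1}^{n}(f^*)^{-i}H=D-(f^*)^{-n}D$, combined with the paper's recalled fact that $\textup{Nef}(X)$ is a pointed, full-dimensional, $(f^*)^{\pm1}$-invariant cone, does give convergence of $\sum_{i\ge1}(f^*)^{-i}H$; the generalized-eigenspace bookkeeping correctly forces $H\in W'$ and (varying $D$ over the open set of int-amplifying ample classes) $f^*|_W=\textup{id}$; and the Perron--Frobenius eigenvector in the closed cone of curves $\overline{\textup{NE}}(X)$, paired against $H$ via Kleiman's criterion, closes the contradiction. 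That said, the telescoping/convergence step and the intermediate rigidity $f^*|_W=\textup{id}$ can be dispensed with entirely, using exactly the same external inputs. By duality, $(f_*|_{\textup{N}_1(X)})^{-1}=(\deg f)^{-1}f^*|_{\textup{N}_1(X)}$ preserves the proper cone $\overline{\textup{NE}}(X)$; if some eigenvalue of $f^*|_{\textup{N}^1(X)}$ has modulus $\le1$, then $(f_*|_{\textup{N}_1(X)})^{-1}$ has spectral radius $\rho\ge1$, so Perron--Frobenius yields a nonzero $\ell_0\in\overline{\textup{NE}}(X)$ with $f_*\ell_0=\rho^{-1}\ell_0$, and then $0<\ell_0\cdot H=(f_*\ell_0-\ell_0)\cdot D=(\rho^{-1}-1)(\ell_0\cdot D)\le0$ by Kleiman's criterion, an immediate contradiction. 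So the convergence argument you single out as the technical heart is in fact a detour; your route is nevertheless logically complete and correct.
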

If $X$ admits an int-amplified endomorphism $f$, then for any $D\in\textup{TI}_f(X)$, $f^*D=sD$ for some integer $s>1$. Further, we have $\deg f>1$ (cf.~\cite[Lemma 3.7]{meng2017building}).

An amplified morphism was first defined by Krieger and Reschke (cf.~\cite{krieger2015cohomological}), and Fakhruddin showed the following very motivating result in  \cite[Theorem 5.1]{fakhruddin2002questions}. A subset $Z\subseteq X$ is said to be \textit{$f$-periodic} if $f^s(Z)=Z$ for some $s>0$. Lemma \ref{densepoint} says that $f$-periodic points are dense if $f$ is amplified.
\begin{lem}[cf.~\cite{fakhruddin2002questions}]\label{densepoint}
Let $X$ be a projective variety over an algebraically closed field $k$. Suppose $f: X\rightarrow X$ is a dominant morphism and $\mathcal{L}$ a line bundle on $X$ such that $f^*\mathcal{L}\otimes \mathcal{L}^{-1}$ is ample. Then the subset $X(k)$ consisting of periodic points of $f$ is Zariski dense in $X$.
\end{lem}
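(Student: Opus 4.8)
\emph{Proposal.} The plan is to reduce to a finite ground field and then count the solutions of a twisted Frobenius equation, following the strategy of \cite{fakhruddin2002questions}. Since the conclusion is geometric I may assume $k=\overline{k}$ and $X$ irreducible; realizing $X$, $f$, $\mathcal{L}$ and the ampleness of $f^{*}\mathcal{L}\otimes\mathcal{L}^{-1}$ over a finitely generated subring of $k$ and specializing at a closed point (which has finite residue field, and such points are Zariski dense in $\operatorname{Spec}$ of that ring), a standard specialization argument reduces the statement to the case where $X$, $f$, $\mathcal{L}$ are defined over a finite field $\mathbb{F}_{q}$ and $k=\overline{\mathbb{F}_{q}}$.

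Let $F=F_{q}\colon X\to X$ be the $q$-power Frobenius. As $f$ is defined over $\mathbb{F}_{q}$, it commutes with $F$. The key observation is that \emph{any $x\in X(k)$ with $f(x)=F^{m}(x)$ for some $m\ge 1$ is $f$-periodic}: by commutativity and induction $F^{km}(x)=f^{k}(x)$ for all $k\ge 1$, so taking $N$ with $x\in X(\mathbb{F}_{q^{N}})$ and $k=N$ gives $f^{N}(x)=F^{Nm}(x)=x$. Hence, for every $m$, the set $P_{m}:=\{x\in X(k)\mid f(x)=F^{m}(x)\}$ consists of $f$-periodic points. Since $f$ commutes with $F$, $\operatorname{Per}(f)$ is $F$-stable, so $W:=\overline{\operatorname{Per}(f)}$ is defined over $\mathbb{F}_{q}$; also $f(\operatorname{Per}(f))\subseteq\operatorname{Per}(f)$, whence $f(W)\subseteq W$. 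It suffices to prove $W=X$.

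Assume $W\subsetneq X$. Then $P_{m}$ is precisely the set of $F^{m}$-twisted fixed points of the correspondence $\Gamma_{f}\subseteq X\times X$, whose first projection is an isomorphism onto $X$ and whose second projection is $f$; both are proper since $X$ is. By the Lefschetz--Verdier trace formula together with Fujiwara's theorem (Deligne's conjecture on the local terms), for $m\gg 0$
\[
\#P_{m}=\sum_{i}(-1)^{i}\operatorname{tr}\bigl(F^{m}\circ\Phi_{i}\mid H^{i}_{c}(X_{k},\mathbb{Q}_{\ell})\bigr),
\]
where the $\Phi_{i}$ are the fixed operators on $\ell$-adic cohomology induced by $f$. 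Since $X$ is irreducible of dimension $n$, the term $i=2n$ equals $\Phi_{2n}\,q^{mn}$ with $\Phi_{2n}\neq 0$ (as $f$ is dominant), while Deligne's weight bounds bound the absolute values of the Frobenius eigenvalues on $H^{i}_{c}(X_{k},\mathbb{Q}_{\ell})$ with $i<2n$ by $q^{m(2n-1)/2}$; hence $\#P_{m}\sim c\,q^{mn}$ for a nonzero constant $c$. On the other hand $P_{m}\subseteq\operatorname{Per}(f)\subseteq W$, and, because $f(W)\subseteq W$, $P_{m}$ is also the set of $F^{m}$-twisted fixed points of $(W,f|_{W})$, so the same formula applied to $W$ gives $\#P_{m}=O(q^{m\dim W})=O(q^{m(n-1)})$. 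This contradicts $\#P_{m}\sim c\,q^{mn}$ for $m\gg 0$; therefore $W=X$ and $\operatorname{Per}(f)$ is Zariski dense.

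\emph{Main obstacle.} The load-bearing step is the trace-formula estimate for $\#P_{m}$: the existence of solutions of $f(x)=F^{m}(x)$ for $m\gg 0$ drives the whole argument, and it is not accessible through the classical Lefschetz fixed-point theorem over $\mathbb{C}$, since an amplified $f$ is expanding and fixed points of $f^{a}$ need not have Lefschetz index $+1$, so the naive count can vanish; it is exactly the inseparability of $F^{m}$ --- which makes its graph meet a fixed subvariety ``transversally'' for $m\gg 0$, in the precise sense of Fujiwara's theorem --- that makes the count work. The remaining ingredients (the spreading-out and specialization reduction, and, if one wants the lemma in full generality rather than the finite morphisms actually needed here, the reduction to the open locus where $f$ is quasi-finite) are routine.
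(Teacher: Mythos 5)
The paper offers no proof of this lemma: it is quoted directly from Fakhruddin's Theorem~5.1, so your proposal has to be measured against his argument. Your treatment of the finite-field case is sound in outline and close to his (he invokes Hrushovski's theorem on the intersection of a correspondence with the graph of a high power of Frobenius where you invoke Fujiwara's theorem; either route yields the nonemptiness of $P_m$ for $m\gg 0$, and your observation that every solution of $f(x)=F^m(x)$ is $f$-periodic is exactly the key point of that step).

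The genuine gap sits in the one sentence you declare routine: ``a standard specialization argument reduces the statement to \dots\ a finite field.'' Observe that your argument never uses the hypothesis that $f^*\mathcal{L}\otimes\mathcal{L}^{-1}$ is ample; over $\overline{\mathbb{F}_q}$ the conclusion holds for \emph{every} dominant self-morphism defined over $\mathbb{F}_q$, so the entire content of the amplified hypothesis must be consumed by the specialization step --- and that step is false for a general dominant $f$. Concretely, let $X=E$ be an elliptic curve over $\overline{\mathbb{Q}}$ and $f$ translation by a non-torsion point: $f$ is a dominant endomorphism of a projective variety with no periodic points at all, yet every reduction modulo a prime is translation by a torsion point, all of whose points are periodic. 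Thus density of periodic points in the closed fibres of an arithmetic model does not pass to the geometric generic fibre. What rescues the amplified case --- and what you must add --- is Fakhruddin's preliminary lemma that ampleness of $f^{n*}\mathcal{L}\otimes\mathcal{L}^{-1}$ (which follows from that of $f^*\mathcal{L}\otimes\mathcal{L}^{-1}$, as $f$ is generically finite and surjective, so the intermediate pullbacks are nef) forces every fixed-point scheme $\mathrm{Fix}(f^n)$ to be finite: a positive-dimensional component $Z$ of the fixed locus would give $(f^{n*}\mathcal{L}\otimes\mathcal{L}^{-1})|_Z\cong\mathcal{O}_Z$ with the left-hand side ample. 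Spread out over the finitely generated base $T$, each $\mathrm{Fix}(f^n)$ is then proper with finite fibres, hence finite over $T$, and it is this uniform finiteness that allows the periodic points produced in the closed fibres to be lifted to the generic fibre. As written, your proposal would prove the (false) statement that every dominant self-map of a projective variety has Zariski-dense periodic points. (A lesser issue: the bound $\#P_m=O(q^{m\dim W})$ also requires Fujiwara or Hrushovski applied to $(W,f|_W)$, with the attendant quasi-finiteness hypotheses, not merely the formula for $X$.)
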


\setlength\parskip{8pt}
\subsection{Rational connectedness of varieties}
~~~~We refer to \cite[CH.~\Rmnum{4}, Definitions 1.1 and 3.2]{kollar1999rational} for the definitions of \textit{uniruled varieties} and \textit{rationally connected varieties}. We recall that  a complete variety $X$ is  \textit{log $Q$-Fano}, if there exists an effective $\mathbb{Q}$-divisor $D$ such that the pair $(X,D)$ is klt and $-(K_X+D)$ is an ample $\mathbb{Q}$-Cartier divisor.\setlength\parskip{0pt}

\begin{remark}\label{remncu}
\textup{If $X$ is a $\mathbb{Q}$-Gorenstein normal projective variety over an algebraically closed field of characteristic zero with the canonical divisor $K_X$ not pseudo-effective, then $X$ is uniruled (cf.~\cite[0.3 Corollary]{boucksom2004pseudo}). }
\end{remark}

Next, we review the properties of the relations between uniruled and rationally connected varieties (cf.~\cite[Proposition 3.3]{kollar1999rational}).
\begin{pro}[cf.~\cite{kollar1999rational}]\label{prorc}
Suppose $X$ is a variety over a field $k$.
\begin{enumerate}
\item[\textup{(1)}] If $X$ is rationally connected, then $X$ is uniruled.
\item[\textup{(2)}] Let $X$ and $X'$ be two proper varieties, birational to each other. Then $X$ is rationally connected if and only if so is $X'$.
\end{enumerate}
\end{pro}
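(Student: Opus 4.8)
The plan is to treat the two parts separately, reducing part (2) to the case of a proper birational \emph{morphism}; throughout I would work over an algebraic closure of $k$, which changes nothing for the geometric notions involved.

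\emph{Part (1).} I would simply unwind the definition: rational connectedness of $X$ furnishes a family of rational curves $p\colon U\to X$, $q\colon U\to T$ (with $T$ a variety, the fibres of $q$ rational curves on which $p$ is non-constant) whose two-pointed evaluation $U\times_T U\to X\times X$, $(u_1,u_2)\mapsto(p(u_1),p(u_2))$, is dominant. Composing with a projection $X\times X\to X$ shows $p$ itself is dominant; after shrinking $T$ so that the normalised family $\widetilde U\to T$ becomes a generically trivial $\mathbb{P}^1$-bundle, one obtains a dominant rational map $T'\times\mathbb{P}^1\dashrightarrow X$ which is non-constant on the general $\{t\}\times\mathbb{P}^1$. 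That is precisely the statement that $X$ is uniruled. (Equivalently, fixing one of the two general points in the definition of rational connectedness already produces a rational curve through a general point of $X$.)

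\emph{Part (2).} First I would reduce to a morphism: given the birational map $\varphi\colon X\dashrightarrow X'$, let $Z$ be the normalisation of the closure of the graph of $\varphi$ inside $X\times X'$; since $X$ and $X'$ are proper, the two projections give proper birational morphisms $Z\to X$ and $Z\to X'$. Hence it suffices to prove that a proper birational morphism $f\colon Z\to X$ of proper varieties preserves rational connectedness in both directions. Fix a dense open $V\subseteq X$ over which $f$ is an isomorphism. If $Z$ is rationally connected, then two general points of $X$ lie in $V$, their preimages are (general) points of $Z$ joined by a rational curve $C\subseteq Z$, and $f(C)$ is a rational curve joining the two chosen points: being dominated by $\mathbb{P}^1$, its function field embeds into $k(t)$ and is rational by L\"uroth. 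Conversely, if $X$ is rationally connected, take general $z_1,z_2\in Z$; after shrinking we may assume $z_1,z_2\in f^{-1}(V)$, so their images $x_1,x_2\in V$ are general in $X$ and there is a rational curve $C\subseteq X$ through $x_1,x_2$. Since $C$ meets $V$, its strict transform $\widetilde C\subseteq Z$ (the closure of $f^{-1}(C\cap V)$) is an irreducible curve containing $z_1,z_2$ that maps birationally onto $C$; as $\widetilde C$ and $C$ then have the same normalisation $\mathbb{P}^1$, the curve $\widetilde C$ is rational, and $Z$ is rationally connected. (With the chain-theoretic definition one lifts each component of the connecting chain by the same strict-transform argument.)

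\emph{Expected obstacle.} The only subtle step is the last implication of part (2): one must arrange that the prescribed general points of $Z$ lie over the isomorphism locus of $f$ (so that the strict transform of the connecting curve passes through them) and, crucially, observe that the strict transform of a rational curve under a birational morphism is again rational, because birational curves share a normalisation. Without this last observation the lifted curve need not be rational; everything else is bookkeeping with the definitions.
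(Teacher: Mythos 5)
The paper does not prove this proposition; it is quoted from Koll\'ar \cite[Ch.~IV, Prop.~3.3]{kollar1999rational} with no in-text argument, so there is no internal proof to compare against. Your blind proof is correct and follows the standard route one would reconstruct from that reference. For part~(1) the cleanest path is really your parenthetical remark: fixing one of the two general points in the definition of rational connectedness already produces a rational curve through a general point of $X$, which is uniruledness; the ``generically trivial $\mathbb{P}^1$-bundle'' phrasing in your main text is a little loose, since the relative normalisation of the universal family need not admit a section over $T$ until one makes a further finite base change, so the one-point version is the argument I would keep. For part~(2) the reduction to a proper birational morphism $f\colon Z\to X$ via the (normalised) graph closure is exactly right, and both directions are sound: in the pushforward direction the connecting curve $C\subseteq Z$ cannot be contracted because its two marked points have distinct images, and L\"uroth makes $f(C)$ rational; in the strict-transform direction $\widetilde C\to C$ is a proper birational morphism of curves, so both share the normalisation $\mathbb{P}^1$ --- the observation you correctly single out as the crux. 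The only assertion left unjustified is that passing to $\bar k$ ``changes nothing''; this does hold because rational connectedness and uniruledness are geometric, but in the paper's setting ($k=\mathbb{C}$, so already algebraically closed) the point is moot anyway.
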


Throughout the proof of Theorem \ref{thm1.1}, we also need several topological facts for rationally connected varieties. 
Recall that a path-connected topological space $X$ is \textit{simply connected} if and only if its fundamental group $\pi_1(X)$ is trivial. Besides, \textit{algebraic fundamental group} $\pi_1^{\textup{alg}}(X)$ (cf.~\cite[Definition 3.5.43]{poonen2017rational}) is a profinite completion of the topological fundamental group $\pi_1(X)$ (cf.~\cite[Theorem 3.5.41]{poonen2017rational}). 

Suppose $X$ is a connected variety over a separably closed field. Then comparing with simply connected (with respect to complex topology) varieties, the variety $X$ is said to be \textit{algebraically simply connected} if it has no nontrivial connected finite \'etale cover, which is equivalent to  $\pi_1^{\textup{alg}}(X)$ being trivial (cf.~\cite[Definition 3.5.45]{poonen2017rational}).

Moreover, from Universal Coefficient Theorem for Cohomology (cf.~\cite[Theorem 3.2]{hatcher2002algebraic}) and Hodge theory, we have the following result.
\begin{lem}\label{univ}
Suppose $X$ is a smooth variety over $\mathbb{C}$ with trivial fundamental group. Then the irregularity $q(X):=h^1(X,\mathcal{O}_X)=\dim \textup{H}^1(X,\mathcal{O}_X)=0$.
\end{lem}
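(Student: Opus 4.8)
The plan is to pass from the topological hypothesis to a statement about singular cohomology with $\mathbb{C}$-coefficients, and then invoke the Hodge decomposition to read off $h^1(X,\mathcal{O}_X)$. Since in our setting $X$ is smooth projective over $\mathbb{C}$, it underlies a compact Kähler manifold, so Hodge theory is available; this is the only implicit hypothesis that needs to be kept in mind.

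First I would note that a trivial fundamental group forces $H_1(X,\mathbb{Z})=\pi_1(X)^{\mathrm{ab}}=0$ by the Hurewicz isomorphism. Next, apply the Universal Coefficient Theorem for cohomology in degree one: the short exact sequence
\[
0\longrightarrow \textup{Ext}^1_{\mathbb{Z}}\bigl(H_0(X,\mathbb{Z}),\mathbb{C}\bigr)\longrightarrow H^1(X,\mathbb{C})\longrightarrow \textup{Hom}_{\mathbb{Z}}\bigl(H_1(X,\mathbb{Z}),\mathbb{C}\bigr)\longrightarrow 0 .
\]
The left term vanishes because $H_0(X,\mathbb{Z})$ is free, and the right term vanishes because $H_1(X,\mathbb{Z})=0$; hence $H^1(X,\mathbb{C})=0$, i.e.\ the first Betti number $b_1(X)$ is zero.

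Finally, invoke the Hodge decomposition on the compact Kähler manifold $X$ in degree one, $H^1(X,\mathbb{C})\cong H^0(X,\Omega^1_X)\oplus H^1(X,\mathcal{O}_X)$ (with the two summands being complex conjugate, so each of complex dimension $\tfrac12 b_1(X)$). Since the left-hand side is $0$, both summands vanish, and in particular $q(X)=h^1(X,\mathcal{O}_X)=\dim H^1(X,\mathcal{O}_X)=0$, as claimed. There is no real obstacle here: the argument is entirely formal once one grants the Hurewicz isomorphism, the Universal Coefficient Theorem, and the Hodge decomposition; the only point requiring a small amount of care is to make sure the variety to which the lemma is applied is smooth and projective (equivalently compact Kähler), which is the case throughout the proof of Theorem \ref{thm1.1}.
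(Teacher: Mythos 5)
Your proposal is correct and follows exactly the route the paper indicates: the paper does not spell out a proof but simply states that the lemma follows from the Universal Coefficient Theorem and Hodge theory, and your argument (Hurewicz $\Rightarrow H_1(X,\mathbb{Z})=0$, UCT $\Rightarrow H^1(X,\mathbb{C})=0$, Hodge decomposition $\Rightarrow h^1(X,\mathcal{O}_X)=0$) is precisely the intended chain. You are also right to flag that the lemma as stated omits the word ``projective'' (or ``proper''), which is needed for Hodge theory; in the paper it is only ever applied to smooth projective resolutions, so this is harmless, but noting it is good practice.
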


We refer to \cite[Theorem 1.1]{takayama2003local} for the following very useful lemma. 
\begin{lem}[cf.~\cite{takayama2003local}]\label{lsco}
Let $X$ be a normal variety and  $f: Y\rightarrow X$  a resolution of singularities. Then the induced homomorphism $f_*: \pi_1(Y)\rightarrow \pi_1(X)$ is an isomorphism if the pair $(X,\Delta)$ is klt for some $\Delta$. 
\end{lem}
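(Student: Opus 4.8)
The plan is to establish surjectivity and injectivity of $f_*$ separately, after reducing to a purely local question. Since the topological fundamental group of a smooth variety is a birational invariant — any two smooth birational models differ by blow-ups along smooth centres of codimension $\ge 2$, which leave $\pi_1$ unchanged — I may assume $f$ is a log resolution of $(X,\Delta)$ that is an isomorphism over $X_{\textup{reg}}$, with reduced exceptional divisor $E=\sum_i E_i$ of simple normal crossings; set $U:=Y\setminus E$, identified with $X_{\textup{reg}}$. I will use two standard facts: (a) $\pi_1(U)\twoheadrightarrow\pi_1(Y)$, with kernel the normal subgroup $\langle\langle m_i\rangle\rangle$ generated by the meridian loops $m_i$ around the $E_i$; and (b) since $\textup{Sing}\,X$ has complex codimension $\ge 2$, the open immersion induces a surjection $\pi_1(U)\twoheadrightarrow\pi_1(X)$. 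Both are compatible with $f$ via $U\hookrightarrow Y\xrightarrow{f}X$, so $f_*$ is surjective, and the whole problem reduces to the equality $\ker\bigl(\pi_1(U)\to\pi_1(X)\bigr)=\langle\langle m_i\rangle\rangle$. The inclusion ``$\supseteq$'' is easy: a small loop around $E_i$ maps into a contractible Stein neighbourhood of a point of $f(E_i)\subseteq\textup{Sing}\,X$, hence dies in $\pi_1(X)$; so only ``$\subseteq$'' is at stake.

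Both the desired isomorphism and the klt hypothesis are analytic-local, so it suffices to treat a germ: replacing $X$ by a small contractible Stein neighbourhood $V$ of a point $x$ (with the restricted boundary, which is again klt), I must show $\pi_1(f^{-1}(V))=1$. The global statement then follows by a Seifert--van Kampen argument over a suitable locally finite Stein cover of $X$ whose pieces and overlaps are simply connected. This germ statement is where the klt assumption is indispensable, not merely convenient: for the log canonical but non-klt affine cone over an elliptic curve, the minimal resolution has $\pi_1\cong\mathbb{Z}^2\ne 1$, so the conclusion genuinely fails in the lc case, which pins down exactly how much room the argument has.

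For the germ statement — the main obstacle — I would argue as in \cite{takayama2003local}. On a log resolution $g\colon W\to V$ write $K_W=g^*(K_V+\Delta)+\sum_i a_iE_i$ with all discrepancies $a_i>-1$ by klt; perturbing $\lceil-\sum_i a_iE_i\rceil$ by a small $g$-ample $\mathbb{Q}$-divisor and invoking Kawamata--Viehweg vanishing yields $R^q g_*\mathcal{O}_W=0$ for $q>0$, together with the finer vanishing controlling sheaves supported on the exceptional locus; feeding this into the exponential exact sequence kills the relevant first-cohomology obstructions. To upgrade from homology to $\pi_1$ one uses that the exceptional fibres $g^{-1}(x)$ of a klt resolution are simply connected (they are rationally chain connected, with tree-like combinatorics, the appearance of a cycle of rational curves being precisely the lc phenomenon excluded above), so that $\pi_1(g^{-1}(x))\to\pi_1(W)$ is trivial and a Leray/fibration argument gives $\pi_1(W)\cong\pi_1(V)=1$. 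The delicate input throughout is the positivity underlying the vanishing and the rational chain connectedness of the fibres — exactly what degenerates for log canonical singularities — while everything else is formal topology. One can equally run this reasoning over $X$ globally, without first localizing, since klt is preserved under the relevant étale and restriction operations.
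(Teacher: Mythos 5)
The paper offers no proof of this lemma; it simply invokes Takayama~\cite{takayama2003local} as a black box, so there is no ``paper's own proof'' to compare against beyond the citation. Your proposal is an outline of Takayama's argument and gets the overall architecture and the role of klt right (the lc cone over an elliptic curve is exactly the cautionary example that shows the hypothesis is sharp). But two of the steps you present as formal topology are in fact where the hard work lives, and your sketch does not close them.

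First, Kawamata--Viehweg vanishing fed through the exponential sequence controls $H^1(f^{-1}(V),\mathbb{Z})$ and hence only the abelianization $\pi_1^{\mathrm{ab}}$; it does not by itself kill $\pi_1$. The passage from cohomological vanishing to triviality of the full fundamental group is the heart of Takayama's paper, and the gesture toward ``rational chain connectedness with tree-like combinatorics'' does not supply it: a cycle of $\mathbb{P}^1$'s is rationally chain connected but has $\pi_1\cong\mathbb{Z}$, and in dimension $\ge 3$ one cannot read off simple connectedness of the exceptional fibre from the dual-complex combinatorics the way one can for surface dual graphs. Second, deducing the global isomorphism $\pi_1(Y)\xrightarrow{\sim}\pi_1(X)$ from local simple connectedness of the $f^{-1}(V)$'s is not a routine Seifert--van Kampen exercise; one needs a good-cover or Nori-type argument, and you also implicitly use that $\pi_1(X_{\mathrm{reg}})\twoheadrightarrow\pi_1(X)$ and the birational invariance of $\pi_1$ for \emph{proper} smooth models, both of which deserve a word given that $X$ is only assumed normal (not projective). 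None of this makes the plan wrong --- it is Takayama's plan --- but as a self-contained proof it has two genuine holes precisely where the citation is doing the work.
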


 In addition, a well-known result (cf.~\cite[Theorem 3.5]{campana1991twistor} and \cite{kollar1992rational}) gives us when the smooth varieties will be simply connected with respect to complex topology.
\begin{thm}[cf.~\cite{campana1991twistor} and \cite{kollar1992rational}]\label{scc}
A smooth, proper and rationally connected variety is simply connected with respect to complex topology.
\end{thm}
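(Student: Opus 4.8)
The plan is to establish two things: (i) $X$ has no nontrivial connected finite \'etale cover, i.e.\ $\pi_1^{\textup{alg}}(X)$ is trivial; and (ii) the topological fundamental group $\pi_1(X)$ is finite. These combine at once: a finite group whose profinite completion is trivial is itself trivial, so $\pi_1(X)=1$ and $X$ is simply connected. We may and do assume $X$ is projective.

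\textbf{The algebraic part (i).} The engine is the identity $\chi(X,\mathcal{O}_X)=1$. To obtain it, one first shows $\textup{H}^0(X,(\Omega_X^1)^{\otimes m})=0$ for every $m\ge 1$: since $X$ is smooth and rationally connected, through a general point of $X$ there passes a very free rational curve $f\colon\mathbb{P}^1\to X$, so $f^*T_X$ is ample, $f^*\Omega_X^1$ is a direct sum of line bundles of negative degree on $\mathbb{P}^1$, and hence $f^*(\Omega_X^1)^{\otimes m}$ has no nonzero global section; as the very free curves through general points sweep out a dense open subset of $X$, any global section of $(\Omega_X^1)^{\otimes m}$ restricts to zero along them and therefore vanishes. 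By Hodge symmetry this forces $h^i(X,\mathcal{O}_X)=h^0(X,\Omega_X^i)=0$ for $i\ge 1$ (in particular $q(X)=0$, cf.~Lemma~\ref{univ}), so $\chi(X,\mathcal{O}_X)=h^0(X,\mathcal{O}_X)=1$. Next, let $\pi\colon\widetilde X\to X$ be a connected finite \'etale cover of degree $d$. One verifies that $\widetilde X$ is again smooth, proper and rationally connected: a very free curve $\mathbb{P}^1\to X$ lifts to $\widetilde X$ because its pullback cover over the simply connected $\mathbb{P}^1$ is trivial, the lifts stay very free, and using transitivity of the monodromy on the $d$ sheets one joins any two general points of $\widetilde X$ by a chain of such curves, so $\widetilde X$ is rationally chain connected and, being smooth, rationally connected. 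Applying the Euler characteristic computation to $\widetilde X$ gives $\chi(\widetilde X,\mathcal{O}_{\widetilde X})=1$; but $\chi(\widetilde X,\mathcal{O}_{\widetilde X})=d\cdot\chi(X,\mathcal{O}_X)=d$ by \'etale base change, whence $d=1$. This proves (i).

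\textbf{The transcendental part (ii).} That $\pi_1(X)$ is finite for a smooth proper rationally connected variety is the deep input of Campana and of Koll\'ar--Miyaoka--Mori, proved by analytic methods (harmonic maps and the structure theory of fundamental groups of compact K\"ahler manifolds, together with Shafarevich-type reductions), using that $X$ carries no nonconstant morphism to a positive-dimensional abelian variety or to a variety of general type. Granting this, $\pi_1(X)$ is finite and, by (i), has no nontrivial finite quotient, hence is trivial.

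\textbf{Main obstacle.} Part (i) is essentially formal once a very free rational curve is available. The crux is part (ii): finiteness of $\pi_1(X)$ cannot be read off from the $\chi$-bookkeeping above, since the irregularity may a priori increase in an infinite cover, and the necessary analytic machinery lies outside the elementary toolkit. The auxiliary statement used in (i)---that a connected finite \'etale cover of a smooth rationally connected variety is again rationally connected---is itself not entirely trivial and must be handled with some care.
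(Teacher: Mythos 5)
The paper itself offers no proof of this statement; it is recorded as a known theorem of Campana and of Koll\'ar--Miyaoka--Mori, so there is no internal argument to compare against. Your outline faithfully reproduces the standard two-step architecture found in the literature: (i) triviality of $\pi_1^{\mathrm{alg}}(X)$ via the Euler characteristic identity $\chi(\mathcal{O}_X)=1$ together with its multiplicativity under finite \'etale covers, and (ii) finiteness of $\pi_1(X)$ from the deep analytic input, the two combining because a finite group with trivial profinite completion is trivial. Part (i) is essentially correct: the vanishing $\textup{H}^0(X,(\Omega_X^1)^{\otimes m})=0$ via very free curves, Hodge symmetry giving $h^i(\mathcal{O}_X)=0$ for $i\ge 1$, and the degree formula $\chi(\mathcal{O}_{\widetilde X})=d\cdot\chi(\mathcal{O}_X)$ are all sound. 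One streamlining: the verification that a connected finite \'etale cover $\widetilde X\to X$ is again rationally connected is cleaner than your monodromy sketch if one simply observes that a very free $f\colon\mathbb{P}^1\to X$ lifts to $\widetilde f\colon\mathbb{P}^1\to\widetilde X$ (as $\mathbb{P}^1$ is simply connected) with $\widetilde f^*T_{\widetilde X}=f^*T_X$ still ample, and then invokes the theorem that over $\mathbb{C}$ a smooth proper variety carrying a very free rational curve is rationally connected.

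The genuine gap, which you acknowledge yourself, is part (ii): you do not prove finiteness of $\pi_1(X)$, you cite Campana and Koll\'ar--Miyaoka--Mori for it --- which is exactly the citation the theorem already carries. The Euler-characteristic bookkeeping alone cannot deliver finiteness, since an infinite group can have trivial profinite completion, so nothing in your part (i) rules out an infinite $\pi_1$ with no finite quotients. As a reconstruction of the proof strategy the proposal is accurate, well organized, and honest about where the difficulty lies, but as a blind proof it is incomplete: the transcendental heart of the matter is taken on faith.
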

\setlength\parskip{8pt}
\subsection{Properties for polarized cases}
~~~~At the end of this preliminary, we consider a polarized endomorphism $f$ on a normal variety $X$. First, we recall the following result which was proved in \cite[Proposition 2.1]{zhang2014invariant}. \setlength\parskip{0pt}
\begin{lem}[cf.~\cite{zhang2014invariant}]\label{lem1}
Let $X$ be a normal variety, $f:X\rightarrow X$ a surjective endomorphism of $\deg(f)>1$ and $D$ a nonzero reduced divisor with $f^{-1}(D)=D$. Assume:
\begin{enumerate}
\item[\textup{(1)}] $X$ is log canonical around $D$;
\item[\textup{(2)}] $D$ is $\mathbb{Q}$-Cartier; and
\item[\textup{(3)}] $f$ is ramified around $D$.
\end{enumerate}
Then the pair $(X,D)$ is log canonical around $D$. In particular, the reduced divisor $D$ is normal crossing outside the union of $\textup{Sing}\,X$ and a codimension three subset of $X$.
\end{lem}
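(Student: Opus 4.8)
The plan is to prove that $(X,D)$ is log canonical near $D$ by a bootstrap along $f$ that raises the coefficient of $D$ to $1$ in the limit; the ``in particular'' assertion will then be the standard description of log canonical pairs in codimension at most two. Throughout I work in a fixed open neighbourhood of $D$ on which $K_X$ is $\mathbb{Q}$-Cartier (hypothesis (1)) and $D$ is $\mathbb{Q}$-Cartier (hypothesis (2)), so that pullbacks along $f$ and all (log) discrepancies below are defined; I write $A(E;X,\Delta)=1+a(E;X,\Delta)$ for the log discrepancy of a divisor $E$ over $X$ and $v_E(D)\ge 0$ for the order of $D$ along $E$.

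First I record the ramification bookkeeping. The Hurwitz formula for the finite surjective map $f$ gives $f^*K_X\sim_{\mathbb{Q}}K_X-R_f$ with $R_f\ge 0$ the ramification divisor. Write $D=\sum_i D_i$ with the $D_i$ prime; since $f$ maps each $D_i$ onto some $D_j$ and $f^{-1}(D)=D$, one has $f^*D=\sum_i e_iD_i$ with $e_i\ge 1$ the ramification index of $f$ along $D_i$, and accordingly $R_f=\sum_i(e_i-1)D_i+R'$ with $R'\ge 0$ sharing no component with $D$. Hypothesis (3) says $e_i\ge 2$ for all $i$; put $e:=\min_i e_i\ge 2$. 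A direct computation then shows that for every $t\in[0,1)$, setting $s:=1-\tfrac{1-t}{e}$ (so that $t<s<1$), near $D$ one has
\[
\Delta_t\;:=\;f^*(K_X+sD)-K_X\;=\;\sum_i\bigl(1-e_i(1-s)\bigr)D_i-R'\;\le\;tD
\]
coefficientwise, because $1-e_i(1-s)\le 1-e(1-s)=t$ and $R'\ge 0$.

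The crux is the bootstrap step: if $(X,tD)$ is log canonical near $D$ for some $t\in[0,1)$, then so is $(X,sD)$ with $s>t$ as above. It is enough to check $A(\bar E;X,sD)\ge 0$ for divisors $\bar E$ over $X$ whose center is contained in $D$ (for a divisor whose center meets the neighbourhood but lies outside $D$, the pair coincides with $(X,0)$ near that center and we invoke (1)). Given such $\bar E$, lift its divisorial valuation along the finite map $f$ to a divisor $E$ over $X$: its ramification index $e_E$ is $\ge 1$ and its center lies in $f^{-1}(\operatorname{center}\bar E)\subseteq f^{-1}(D)=D$. Since $K_X+\Delta_t=f^*(K_X+sD)$, the transformation rule for log discrepancies under finite pullback (see, e.g., \cite{kollar2008birational}) yields
\[
A(\bar E;X,sD)\;=\;\frac{1}{e_E}\,A(E;X,\Delta_t)\;\ge\;\frac{1}{e_E}\,A(E;X,tD)\;\ge\;0,
\]
the middle inequality from $\Delta_t\le tD$ and the last from $(X,tD)$ being log canonical near $D$ together with $\operatorname{center}E\subseteq D$. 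Iterating from $t_0=0$ (valid by (1)) with $t_{k+1}:=1-\tfrac{1-t_k}{e}$ gives $t_k=1-e^{-k}\to 1$ and $(X,t_kD)$ log canonical near $D$ for all $k$; since $a(E;X,t_kD)=a(E;X,0)-t_k\,v_E(D)\to a(E;X,D)$ for each $E$ near $D$, we get $a(E;X,D)\ge -1$, i.e.\ $(X,D)$ is log canonical near $D$.

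Finally, since $(X,D)$ is log canonical and $D$ is reduced, the standard analysis of log canonical singularities in codimension at most two applies: cutting by a general surface section through a codimension $\le 2$ point of the smooth locus of $X$ reduces $(X,D)$ to a log canonical pair $(S,C)$ with $S$ a smooth surface and $C$ a reduced curve, which forces $C$ to be nodal there; hence $D$ is normal crossing outside $\operatorname{Sing}X$ and a closed subset of codimension $\ge 3$. The heart of the matter is the bootstrap, and the whole force of hypothesis (3) is that $e\ge 2$ produces a genuine gain $s>t$ (with $e=1$ it would degenerate to $s=t$ and the iteration would stall). The two points needing care are the direction of the ramification factor in the finite-pullback formula for log discrepancies---it is the divisor $E$ over the \emph{source} whose log discrepancy carries the factor $e_E$---and the coefficientwise bound $\Delta_t\le tD$; both are routine once the Hurwitz bookkeeping is fixed, but they are where a sign slip would be fatal.
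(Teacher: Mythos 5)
The paper does not supply its own proof of this lemma: it is quoted from \cite[Proposition 2.1]{zhang2014invariant} with a citation, so there is no in-house argument to compare against. Your reconstruction is correct and follows essentially the standard route used in that reference---Riemann--Hurwitz bookkeeping with $e_i\ge 2$, the crepant-pullback identity $K_X+\Delta_t=f^*(K_X+sD)$ with $\Delta_t\le tD$, the finite-pullback transformation rule for log discrepancies, and the bootstrap $t\mapsto 1-\tfrac{1-t}{e}\to 1$---together with the elementary surface classification of reduced log canonical curve germs for the ``in particular'' clause.
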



For a linear map $\phi:V\rightarrow V$ of a finite dimensional real normed vector space $V$, denote by $||\phi||$ the norm of $\phi$.
The following proposition gives us a criterion for $\phi$ to be diagonalizable (cf.~\cite[Definition 2.6, Proposition 2.9]{meng2018building} and \cite[Proposition 3.1]{cascini2017polarized}).
\begin{pro}[cf.~\cite{cascini2017polarized}]\label{mmwan123}
Let $\phi: V\rightarrow V$ be an invertible linear map of a positive dimensional real normed vector space $V$. Assume $\phi(C)=C$ for a convex cone $C\subseteq V$ such that $C$ spans $V$ and its closure $\overline{C}$ contains no line. Let $q$ be a positive number. Then the conditions \textup{(i)} and \textup{(ii)} below are equivalent.
\begin{enumerate}
\item[\textup{(i)}] $\phi(u)=qu$ for some $u\in C^o$ \textup{(}the interior part of $C$\textup{)}.
\item[\textup{(ii)}] There exists a constant $N>0$, such that $\frac{||\phi^i||}{q^i}<N$ for all $i\in\mathbb{Z}$.
\end{enumerate}
Assume further the equivalent conditions (i) and (ii). Then the following are true.
\begin{enumerate}
\item[\textup{(1)}] $\phi$ is a diagonalizable linear map with all eigenvalues of modulus $q$.
\item[\textup{(2)}] Suppose $q>1$. Then for any $v\in V$ such that $\phi(v)-v\in C$, we have $v\in C$.
\end{enumerate}
\end{pro}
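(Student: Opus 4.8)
The plan is to first establish the equivalence of (i) and (ii), and then, assuming these, to deduce (1) and (2). The implication (i) $\Rightarrow$ (ii) is the easy direction: if $\phi(u)=qu$ for some interior point $u\in C^o$, I would use the fact that $C$ spans $V$ and $\overline{C}$ contains no line to fix a compact base $B$ of the cone (a compact convex slice whose positive-multiples sweep out $C\setminus\{0\}$), or equivalently to choose a norm adapted to $u$. Since $u$ is interior, there is a neighborhood basis argument: every $v\in V$ can be written as $v = a_1 c_1 - a_2 c_2$ with $c_i$ in a fixed bounded piece of $C$, and since $\phi$ preserves $C$ with $\phi(u)=qu$, iterating controls $\|\phi^i(c)\|$ and $\|\phi^{-i}(c)\|$ by comparison with $q^i u$ and $q^{-i}u$ respectively. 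This gives a uniform bound $\|\phi^i\|/q^i < N$ for all $i\in\mathbb{Z}$. The converse (ii) $\Rightarrow$ (i) is the substantive part: boundedness of $\{\phi^i/q^i : i\in\mathbb{Z}\}$ forces, by taking a Cesàro average $\frac{1}{2m+1}\sum_{i=-m}^{m}\phi^i/q^i$ and extracting a convergent subsequence (using boundedness in the finite-dimensional operator space), a nonzero operator $P$ with $\phi P = q P = P\phi$; picking any $c\in C^o$ and setting $u := P(c)$, one checks $u\neq 0$ (the averages act as near-identity in a suitable sense, or one argues $P\neq 0$ and $P(C^o)$ meets $C^o$) and $\phi(u)=qu$, with $u\in C$ since $C$ is closed under the averaging operations; that $u$ is interior follows because $P$ maps a neighborhood of $c$ into $C$.

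For part (1), assume (i)–(ii). Boundedness of $\phi^i/q^i$ over all $i\in\mathbb{Z}$ is incompatible with any Jordan block of size $\geq 2$ (whose powers grow polynomially, defeating the bound when divided by $q^i$ if the eigenvalue has modulus exactly $q$) and with any eigenvalue of modulus $\neq q$ (a modulus $>q$ eigenvalue blows up $\phi^i/q^i$ as $i\to+\infty$; a modulus $<q$ one blows it up as $i\to-\infty$). Hence $\phi$ is diagonalizable over $\mathbb{C}$ with every eigenvalue of modulus $q$.

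For part (2), suppose $q>1$ and $v\in V$ satisfies $\phi(v)-v\in C$. I would iterate: applying $\phi$ repeatedly and using $\phi(C)=C$, write $\phi^{k}(v) - v = \sum_{j=0}^{k-1}\phi^{j}(\phi(v)-v) \in C$ for every $k\geq 1$. Dividing by $q^{k}$ and using the uniform bound $\|\phi^{k}\|/q^{k}<N$, the term $\phi^{k}(v)/q^{k}$ stays bounded, so along a subsequence $\phi^{k}(v)/q^{k}\to w$ for some $w\in\overline{C}$; since $q>1$, dividing the displayed relation by $q^{k}$ and letting $k\to\infty$ gives, after massaging, $v = \lim_k\bigl(\phi^k(v) - \sum_{j=0}^{k-1}\phi^j(\phi(v)-v)\bigr)$ rearranged so that $v$ appears as an element of the closed convex cone $\overline{C}$; more carefully, $-v = \sum_{j\geq 0}\phi^{j}(\phi(v)-v)\cdot(\text{with }q^{-j}\text{-type weights})$ does not converge, so instead I use the standard trick: $v = \frac{1}{q-1}\bigl((\phi - \mathrm{id})(\text{something})\bigr)$ — concretely, from $\phi(v)-v=c\in C$ one gets $v = \sum_{j=1}^{\infty} q^{-j}\,\phi^{j-1}(c)$ is false in general, so the cleanest route is: $(\mathrm{id}-q^{-1}\phi)^{-1}$ exists (eigenvalues of $q^{-1}\phi$ have modulus $1$, but $\mathrm{id}-q^{-1}\phi$ can be singular) — the safe argument is the averaging projector $P$ from the first part: $P(\phi(v)-v)=qP(v)-P(v)=(q-1)P(v)$, and also $P(\phi(v)-v)\in\overline{C}$ since $P$ preserves $\overline{C}$; meanwhile $v-P(v)$ lies in the complementary $\phi$-invariant subspace on which no eigenvalue equals $q$, and one shows $v-P(v)\in C$ by a contracting-iteration estimate, whence $v = P(v) + (v-P(v))\in C$.

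The main obstacle I expect is the converse implication (ii) $\Rightarrow$ (i): manufacturing the eigenvector $u$ genuinely \emph{inside} $C^o$ rather than merely in $\overline{C}$, and verifying it is nonzero. The condition that $\overline{C}$ contains no line is exactly what prevents the averaging projector $P$ from annihilating $C^o$, so the delicate point is to exploit "no line in $\overline{C}$" together with "$C$ spans $V$" to pin down that $P(c)$ is a nonzero interior point for a well-chosen $c\in C^o$; a clean way is to observe that the compact convex base $B$ of $C$ is carried into itself (up to positive scaling) by each $\phi^i/q^i$ and to invoke a fixed-point/barycenter argument, or to cite the cited references \cite{meng2018building, cascini2017polarized} where this normed-cone lemma is established. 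Part (2)'s convergence bookkeeping is routine once the projector $P$ is in hand.
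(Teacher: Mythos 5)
The paper itself does not prove this proposition; it is quoted directly from \cite[Proposition 3.1]{cascini2017polarized} (see also \cite[Proposition 2.9]{meng2018building}), so there is no internal proof to compare your sketch against. Judging the sketch on its own terms, the equivalence (i)$\Leftrightarrow$(ii) and part (1) are in the right spirit, but two points are genuine gaps rather than routine bookkeeping.

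First, in (ii)$\Rightarrow$(i) you produce $u:=P(c)$ via a Ces\`aro limit of $\phi^i/q^i$, but only sketch why $u\neq 0$ and leave $u\in C^o$ (interior, not merely $u\in\overline{C}$) essentially unaddressed. Non-vanishing can be saved: for $\ell$ strictly positive on $\overline{C}\setminus\{0\}$, set $m_i:=(\phi^*)^i\ell/q^i$; boundedness of $\phi^{-i}/q^{-i}$ gives $\|\ell\|\le N\|m_i\|$, hence $\|m_i\|$ is bounded below, and $c\in C^o$ gives $m_i(c)\ge\delta\|m_i\|$, so every Ces\`aro average of $\ell(\phi^i c)/q^i$ is bounded below by a positive constant and $\ell(u)>0$. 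But interiority of $u$ does not follow from this; a Perron--Frobenius eigenvector for a cone-preserving map can lie on $\partial C$. You would need an additional argument (for instance that $P$ carries a neighborhood of $c$ into $C$, which requires a quantitative control on $P_m$ that you do not establish), and this is precisely where ``$\overline C$ contains no line'' and ``$C$ spans $V$'' must be exploited more carefully than a fixed-point allusion.

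Second, your treatment of (2) loses the thread. Forward iteration $\phi^k(v)-v=\sum_{j=0}^{k-1}\phi^j(\phi(v)-v)$ does not help, and you correctly abandon it; but the fallback via the spectral projector $P$ is also wrong, since after (1) \emph{all} eigenvalues of $\phi$ have modulus $q>1$, so there is no ``contracting'' complementary piece $v-P(v)$. The clean route is to iterate \emph{backwards}: writing $c:=\phi(v)-v\in C$, one has $v=\phi^{-k}(v)+\sum_{j=1}^k\phi^{-j}(c)$, where each partial sum lies in $C$ and $\|\phi^{-k}(v)\|\le Nq^{-k}\|v\|\to0$ because $q>1$ and (ii) controls negative powers too. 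This yields $v\in\overline{C}$, which is the conclusion one actually extracts in the intended application ($C=\textup{PE}(X)$ is closed, so $\overline{C}=C$). Your Neumann-series and $(\mathrm{id}-q^{-1}\phi)^{-1}$ detours circle around this idea without landing on the usable form of the estimate.
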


\begin{coro}\label{remcariter}
Suppose $f:X\rightarrow X$ is a polarized endomorphism on a normal projective variety $X$ such that $f^*H\equiv qH$ for a positive number $q$ and an ample divisor $H$ on $X$. Then the linear operation $f^*|_{\textup{N}_{n-1}(X)}$ is diagonalizable with all eigenvalues of modulus $q$. In particular, if $D$ is an  effective reduced Weil divisor such that $f^{-1}D=D$, then with $f$ replaced by its power, $f^*D_i=qD_i$ for each irreducible component $D_i$ of $D$ \textup{(}and then $f^*D=qD$\textup{)}.
\end{coro}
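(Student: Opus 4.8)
The plan is to deduce both assertions from Proposition \ref{mmwan123}, applied to the real vector space $V=\textup{N}_{n-1}(X)$, the linear automorphism $\phi=f^*|_{\textup{N}_{n-1}(X)}$, the convex cone $C=\textup{PE}(X)$, and the given number $q$. First I would note that $f$ is finite (a polarized endomorphism cannot contract a curve $C$, since $f^*H\cdot C=H\cdot f_*C>0$), so $f^*$ is indeed a well-defined automorphism of $\textup{N}_{n-1}(X)$ with $f^*(\textup{PE}(X))=\textup{PE}(X)$. The cone $\textup{PE}(X)$ spans $\textup{N}_{n-1}(X)$, because this space is finite dimensional and is generated by the classes of prime divisors, each of which is effective; and $\overline{\textup{PE}(X)}=\textup{PE}(X)$ is strictly convex, hence contains no line (cf.~\cite{meng2018building}, \cite{zhang2016action}): if $\alpha$ and $-\alpha$ were both pseudo-effective then $\alpha\cdot H^{n-1}\ge 0$ and $-\alpha\cdot H^{n-1}\ge 0$, forcing $\alpha\equiv_w 0$.

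The crucial point is to verify hypothesis (i) of Proposition \ref{mmwan123}, namely that $\phi$ has an eigenvector with eigenvalue $q$ lying in the interior $\textup{PE}(X)^o$. I would take this eigenvector to be the class of $H$ (which we may assume to be an ample Cartier divisor, as it comes from the polarization): the hypothesis $f^*H\equiv qH$ gives $f^*[H]=q[H]$ in $\textup{N}_{n-1}(X)$, so it remains to show $[H]\in\textup{PE}(X)^o$. For any prime divisor $E\subseteq X$, Serre's theorem applied to the ideal sheaf $\mathcal{I}_E$ shows that $\mathcal{I}_E\otimes\mathcal{O}_X(mH)$ is globally generated for $m\gg 0$, so it has a nonzero section whose divisor of zeros is an effective Weil divisor linearly equivalent to $mH$ and containing $E$; hence $mH-E$ is effective and $[H]-\tfrac1m[E]\in\textup{PE}(X)$. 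Choosing finitely many prime divisors $E_1,\dots,E_r$ whose classes span $\textup{N}_{n-1}(X)$ and a single $m$ that works for all of them, a routine convexity argument (decomposing a small perturbation $\beta=\sum_j c_j[E_j]$ of $0$ into its positive and negative parts and writing $[H]-\beta$ as a non-negative combination of the classes $[H]$, $[H]-\tfrac1m[E_j]$ and $[E_j]$) shows $[H]\in\textup{PE}(X)^o$. Then conclusion (1) of Proposition \ref{mmwan123} yields that $f^*|_{\textup{N}_{n-1}(X)}$ is diagonalizable with all eigenvalues of modulus $q$, which is the first assertion.

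For the second assertion, let $D=\sum_{i=1}^{k}D_i$ be an effective reduced Weil divisor with $f^{-1}D=D$. Then $f$ permutes the finitely many components $D_i$, so after replacing $f$ by a suitable power we may assume $f^{-1}(D_i)=D_i$ for every $i$. Since $\textup{Supp}(f^*D_i)\subseteq f^{-1}(D_i)=D_i$ and $D_i$ is irreducible, we get $f^*D_i=s_iD_i$ for a positive integer $s_i$. As $D_i\cdot H^{n-1}>0$, the class $[D_i]$ is nonzero in $\textup{N}_{n-1}(X)$, hence an eigenvector of $f^*|_{\textup{N}_{n-1}(X)}$ with eigenvalue $s_i$; by the first assertion $s_i=|s_i|=q$. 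Therefore $f^*D_i=qD_i$ for all $i$, and summing gives $f^*D=qD$.

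The main obstacle is the interior statement $[H]\in\textup{PE}(X)^o$ (equivalently, that an ample class is big when regarded as an $(n-1)$-cycle class) together with the strict convexity of $\textup{PE}(X)$; these are exactly the facts one must check with care, since $\textup{N}_{n-1}(X)$ is a priori strictly larger than $\textup{N}^1(X)$, and once they are in place Proposition \ref{mmwan123} does the rest essentially formally.
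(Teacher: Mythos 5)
Your proposal is correct and follows the same route as the paper: apply Proposition~\ref{mmwan123} to $V=\textup{N}_{n-1}(X)$, $\phi=f^*|_{\textup{N}_{n-1}(X)}$, $C=\textup{PE}(X)$ and the number $q$, with $[H]$ as the interior eigenvector, then deduce the second assertion from the diagonalizability statement. The one place where you argue differently is the interior claim $[H]\in\textup{PE}(X)^o$: the paper cites Fulger--Koll\'ar--Lehmann's extension of the volume function to Weil $\mathbb{R}$-divisors (\cite[Theorem 3.5]{fulger2016volume}) to say that $\textup{vol}(H)>0$ places $[H]$ in the big cone, which is the interior of $\textup{PE}(X)$; you instead give a direct argument via Serre vanishing (global generation of $\mathcal{I}_E\otimes\mathcal{O}_X(mH)$) and a convexity decomposition, which is more elementary and self-contained. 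For the second assertion the paper is terse, while you make explicit that $f^*D_i=s_iD_i$ with $s_i$ a positive integer (ramification index) and that $|s_i|=q$ forces $s_i=q$; this is a useful clarification but the same idea (the paper alternatively offers the projection formula in Remark~\ref{rem12345}). Both proofs are sound; yours simply trades one citation for a short direct verification.
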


\begin{proof}
We may regard $\textup{N}^1(X)$ as a subspace of $\textup{N}_{n-1}(X)$ (cf.~\cite[Lemma 3.2]{zhang2016action}) and consider the invertible linear map  $\phi=f^*|_{\textup{N}_{n-1}(X)}$. Let $V:=\textup{N}_{n-1}(X)$ and $C:=\textup{PE}(X)\subseteq V$ as we defined in Subsection \ref{section2.1}.  Since $H$ is ample, the volume $\textup{vol}(H)>0$ and thus $H$ lies in the interior part of  $\textup{PE}(X)$ if we regard $H$ as a big Weil $\mathbb{R}$-divisor (cf.~\cite[Theorem 3.5 (ii), (iii)]{fulger2016volume} and  \cite[Definition 2.4]{meng2018building}). In addition, $\textup{PE}(X)$ spans the whole $\textup{N}_{n-1}(X)$ and then by Proposition \ref{mmwan123}, $\phi$ is a diagonalizable linear map with all eigenvalues of modulus $q$. 

Since $D$ is $f^{-1}$-invariant, after replacing $f$ by its power, we may assume $f^{-1}D_i=D_i$ for each irreducible component $D_i$ of $D$. Therefore, we can see from the above discussion that $f^*D_i=\phi(D_i)=qD_i$ for each component $D_i$.
\end{proof}
\begin{remark}\label{rem12345}
\textup{
Indeed, the second part of Corollary \ref{remcariter} also follows easily from the projection formula (cf.~\cite[Proposition 2.3]{fulton2013intersection}). Besides, the above corollary will fail if we remove the condition that $f$ is polarized. For example (provided by De-Qi Zhang), consider the product $X=\mathbb{P}^1\times \mathbb{P}^1$ and $f=f_1\times f_2$, where $f_i$ is the power map of $\mathbb{P}^1$, mapping $[x,y]$ to $[x^{q_i},y^{q_i}]$ with $q_1\neq q_2$. Let $D_1=\{a_1\}\times \mathbb{P}^1$, $D_2=\mathbb{P}^1\times \{a_2\}$ and $D=D_1+D_2$, where $a_i$ is one of two $f_i^{-1}$-invariant (coordinate) points. Then $D$ is an ample Cartier divisor. However, by projection formula, $f^*D_i=q_iD_i$. Moreover, in this case, we can get four $f^{-1}$-invariant prime divisors.}
\end{remark}

The following proposition extends \cite[Proposition 2.12]{zhang2014invariant} to the case when $V_j$ are reduced divisors. We first state this generalized result and then do some preparations for its proof.
\begin{pro}\label{promm100}
Let $X$ be a normal projective variety of dimension $n\ge 2$, $V_j~(1\le j\le s)$ reduced divisors \textup{(}may have more than one components\textup{)}, and $f: X\rightarrow X$ a polarized endomorphism with $\deg (f)=q^n$ \textup{(}$q>1$ an integer\textup{)} such that $K_X$ is $\mathbb{Q}$-Cartier and
\begin{enumerate}
\item[\textup{(1)}] $X$ has only log canonical singularities around $\bigcup V_j$;
\item[\textup{(2)}] every $V_j$ is $\mathbb{Q}$-Cartier and ample;
\item[\textup{(3)}] $f^{-1}(V_j)=V_j$ for all $j$; and
\item[\textup{(4)}] $V_i$ and $V_j$ $(i\neq j)$ have no common irreducible components.
\end{enumerate}
Then $s\le n+1$; and $s=n+1$ only if: $f$ is \'etale outside $(\bigcup V_j)\cup f^{-1}(\textup{Sing}\,X)$ and $K_X+\sum_{j=1}^{n+1}V_j\equiv 0$.
\end{pro}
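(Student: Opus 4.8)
The plan is to reduce to the case where $f^{*}$ multiplies every irreducible component of $D:=\sum_{j}V_{j}$ by $q$, to extract a numerical identity from the ramification formula, and then to bound $s$ by induction on $n$ via adjunction. Since $f$ is polarised with $\deg f=q^{n}$, its polarisation multiplier must be $q$ (as $\deg f$ is the $n$-th power of the multiplier), so $f^{*}H\equiv qH$ for some ample $H$. By hypothesis (4) the divisor $D$ is reduced, so Corollary \ref{remcariter} lets us replace $f$ by a power and assume $f^{*}W=qW$ for every component $W$ of $D$; then $f^{-1}(W)=W$, $f$ is ramified along $W$ with index exactly $q$, and $\deg f=q^{n}>1$. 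Hypotheses (1)--(2) together with Lemma \ref{lem1} now give that $(X,D)$ is log canonical around $D$ -- this is the key input that makes the subsequent restrictions of the $V_{j}$ controllable. From the ramification formula $K_{X}=f^{*}K_{X}+R_{f}$ with $R_{f}\ge 0$, the index computation yields $R_{f}=(q-1)D+R'$ with $R'\ge 0$ effective having no component in common with $D$; adding $D$ and using $f^{*}D=qD$ produces the identity $K_{X}+D=f^{*}(K_{X}+D)+R'$.

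Next I would read off the numerical consequences. By Corollary \ref{remcariter}, $f^{*}|_{\textup{N}_{n-1}(X)}$ is diagonalisable with all eigenvalues of modulus $q>1$, so $\textup{id}-f^{*}$ is invertible on $\textup{N}_{n-1}(X)$ and the identity gives $-(K_{X}+D)=\sum_{k\ge 1}(f^{*})^{-k}[R']$, a convergent series inside the closed cone $\textup{PE}(X)$; hence $-(K_{X}+D)$ is pseudo-effective. Intersecting the identity with $V_{1}^{n-1}$ (note $f^{*}V_{1}=qV_{1}$) and using the projection formula to get $f^{*}(K_{X}+D)\cdot V_{1}^{n-1}=q\,(K_{X}+D)\cdot V_{1}^{n-1}$, one finds $(1-q)(K_{X}+D)\cdot V_{1}^{n-1}=R'\cdot V_{1}^{n-1}\ge 0$, so $(K_{X}+D)\cdot V_{1}^{n-1}\le 0$ with equality if and only if $R'=0$. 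When $R'=0$ the identity makes $K_{X}+D$ a fixed vector of $f^{*}$, which has no eigenvalue $1$; so $K_{X}+D\equiv 0$, and then $R_{f}=(q-1)D$, i.e.\ $f$ is unramified in codimension one away from $D$, whence $f$ is \'etale outside $(\bigcup V_{j})\cup f^{-1}(\textup{Sing}\,X)$ by purity of the branch locus over the smooth locus (the chain rule for ramification divisors lets one undo the earlier passage to a power of $f$). Thus the three assertions of the ``$s=n+1$'' clause are all equivalent to $R'=0$, and it remains to prove $s\le n+1$, with $s=n+1$ forcing $R'=0$.

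I would prove these by induction on $n$, allowing $n=1$. For $n=1$, $X$ is a smooth curve and, after the reduction, each point of $D$ is totally ramified of index $q$; Riemann--Hurwitz, $\deg R_{f}=(1-q)(2g(X)-2)$, together with $R_{f}\ge (q-1)D$ gives $\#D\le 2-2g(X)\le 2$, hence $s\le\#D\le 2$, with equality forcing $g(X)=0$ and $R'=0$. For $n\ge 2$ and $s\ge 1$, pick a component $W$ of $V_{1}$ with normalisation $\nu\colon\widetilde W\to W\hookrightarrow X$. Since $f^{-1}(W)=W$, $f$ induces an endomorphism $\widetilde f$ of $\widetilde W$ which is polarised with multiplier $q$ and has $\deg\widetilde f=\deg f/q=q^{n-1}$. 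Applying inversion of adjunction (Proposition \ref{normallc}) to $(X,D)=(X,W+(D-W))$ shows that $(\widetilde W,B^{\nu})$ is log canonical, where $K_{\widetilde W}+B^{\nu}=\nu^{*}(K_{X}+D)$ and $B^{\nu}$ decomposes as $\sum_{j\ge 2}\widetilde V_{j}$ plus an effective remainder (the different of the reduced divisor $W$ in the log canonical pair $(X,V_{1})$), with $\widetilde V_{j}:=\nu^{*}(V_{j}|_{W})$. For $j\ge 2$ each $\widetilde V_{j}$ is a nonzero effective ample $\mathbb{Q}$-Cartier divisor on $\widetilde W$ (restriction of the ample $\mathbb{Q}$-Cartier divisor $V_{j}$, pulled back along the finite map $\nu$), totally invariant for $\widetilde f$; and since every coefficient of $B^{\nu}$ is $\le 1$, each $\widetilde V_{j}$ is reduced and the $\widetilde V_{j}$ pairwise share no component. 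Hence the $s-1$ divisors $\widetilde V_{2},\dots,\widetilde V_{s}$ on $\widetilde W$ satisfy the hypotheses of the proposition in dimension $n-1$ (the condition that $K_{\widetilde W}$ be $\mathbb{Q}$-Cartier being supplied by the adjunction set-up), so the inductive hypothesis gives $s-1\le (n-1)+1$, i.e.\ $s\le n+1$.

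For the equality case, suppose $s=n+1$. Then $\widetilde V_{2},\dots,\widetilde V_{n+1}$ are $n=(n-1)+1$ divisors as above, so the ``$s=n+1$'' clause of the inductive hypothesis gives $K_{\widetilde W}+\sum_{j=2}^{n+1}\widetilde V_{j}\equiv 0$. Since the remainder $B^{\nu}-\sum_{j\ge 2}\widetilde V_{j}$ is effective, $\nu^{*}(K_{X}+D)=\bigl(K_{\widetilde W}+\sum_{j\ge 2}\widetilde V_{j}\bigr)+\bigl(B^{\nu}-\sum_{j\ge 2}\widetilde V_{j}\bigr)$ is numerically equivalent to an effective divisor on $\widetilde W$, hence has nonnegative intersection with $\nu^{*}(V_{1}|_{W})^{n-2}$; by the projection formula this number equals $(K_{X}+D)\cdot V_{1}^{n-2}\cdot[W]$, and summing over the components $W$ of $V_{1}$ yields $(K_{X}+D)\cdot V_{1}^{n-1}\ge 0$. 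Combined with the inequality $(K_{X}+D)\cdot V_{1}^{n-1}\le 0$ obtained above, this forces $(K_{X}+D)\cdot V_{1}^{n-1}=0$, hence $R'=0$, completing the proof. The step I expect to be the main obstacle is exactly this equality argument: one must arrange the induction so that the ``$R'=0$'' (equivalently $K_{X}+D\equiv 0$) conclusion transfers correctly across the adjunction, keeping track of the different and of the possibly non-ample components of $(D-W)|_{W}$, handling the bookkeeping when the $V_{j}$ are reducible, and verifying at each stage that $\widetilde W$ with its restricted divisors genuinely satisfies all the hypotheses (in particular that $\widetilde W$ is $\mathbb{Q}$-Gorenstein).
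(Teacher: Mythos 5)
Your plan differs genuinely from the paper's. You attempt a clean induction on $\dim X$: normalize a component $W$ of $V_1$, verify that $\widetilde W$ together with $\widetilde V_2,\dots,\widetilde V_s$ again satisfies the hypotheses of the proposition in dimension $n-1$, and anchor at $n=1$ with Riemann--Hurwitz. The paper does not re-invoke the proposition on the intermediate normalizations; it iterates the normalization construction $n-2$ times down to a \emph{surface} $S$, carrying only the pulled-back log ramification equation, and then closes the argument with a surface-specific dual-graph lemma (cf.~\cite[Lemma 2.8]{zhang2014invariant}) giving $K_S+D\sim 0$ for three pairwise-intersecting ample invariant curves, followed by the numerical Lemmas \ref{lemnumer} and \ref{mmnxbxhwa}. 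Your numerical observation $(1-q)(K_X+D)\cdot V_1^{n-1}=R'\cdot V_1^{n-1}$ and the idea of transferring the equality case via intersection with $V_1^{n-1}$ are correct and attractive, and the $n=1$ anchor is fine.

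The gap is precisely in the inductive step, and it is why the paper avoids recursion. Two hypotheses of the proposition are not verified for $(\widetilde W,\{\widetilde V_j\})$. First, $K_{\widetilde W}$ need not be $\mathbb{Q}$-Cartier: adjunction only gives that $K_{\widetilde W}+B^\nu=\nu^*(K_X+D)$ is $\mathbb{Q}$-Cartier, and the parenthetical claim that ``the adjunction set-up'' supplies $\mathbb{Q}$-Gorensteinness of $\widetilde W$ is unjustified (you flag this as a worry yourself, but the proof proceeds as if it were settled). Second, $\widetilde V_j:=\nu^*(V_j|_W)$ is generally \emph{not} reduced. By Shokurov's different formula (\cite[Corollaries 3.10 and 3.11]{shokurov91}, as used in Claim \ref{cliam3.10} of this paper), the coefficient of a prime divisor $P\subset\widetilde W$ in $B^\nu$ is $1-\tfrac1m+\sum_j\tfrac{r_j}{m}$, where $m$ is the Cartier index of $K_X+W$ near $\nu(P)$ and $r_j/m$ is the multiplicity of $\nu^*V_j$ at $P$ with $r_j\in\mathbb{Z}_{\ge 1}$; log canonicity forces $r_j=1$, so $\nu^*V_j$ has coefficient $1/m$ at $P$, which is $1$ only when $m=1$. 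Replacing $\widetilde V_j$ by its support restores reducedness but then $\mathbb{Q}$-Cartierness and ampleness of the support are no longer automatic. The paper sidesteps both problems by never needing $K_{X_i}$ $\mathbb{Q}$-Cartier or $D_i$ reduced on the intermediate $X_i$ (only the $\mathbb{Q}$-Cartier combination $K_{X_i}+D_i=\sigma^*(K_X+D_0)$), and by asserting ampleness of the supports only on the terminal surface $S$ where the dual-graph lemma is applied. To repair your induction you would need either a Gorenstein assumption on $X$ near $\bigcup V_j$, or to prove and induct on a strengthened statement admitting a fractional boundary, rather than applying the proposition verbatim.
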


\begin{remark}\label{mm2.21}
\textup{With the same assumption in Proposition \ref{promm100}, the remaining necessary condition for $s=n+1$ in \cite[Proposition 2.12]{zhang2014invariant} still holds: each $V_j$ is irreducible and $\bigcap_{i=1}^t V_{b_i}\subseteq X$ is a normal irreducible subvariety for every subset $\{b_1,\cdots, b_t\}\subseteq \{1,\cdots,n+1\}$ with $1\le t\le n-2$ (cf.~\cite[Claim 2.11]{zhang2014invariant}, Proof of Proposition \ref{promm100} and Remark \ref{mm2.24}).}
\end{remark}

 Before proving Proposition \ref{promm100}, we first prove the following lemmas. Lemma \ref{lemnumer} follows immediately  from Corollary \ref{remcariter}.\begin{lem}\label{lemnumer}
Suppose $f:X\rightarrow X$ is a polarized endomorphism on a normal projective variety $X$ of dimension $n$ and $\deg f=q^n>1$. Then any Weil divisor $M$ \textup{(}not necessarily effective\textup{)} on $X$ such that $f^*M\equiv_w M$, is weakly numerically trivial. In particular, suppose further that $M$ is $\mathbb{Q}$-Cartier. Then $M\equiv 0$.
\end{lem}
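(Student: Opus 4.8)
The plan is to read off the statement from the diagonalizability in Corollary \ref{remcariter}, so the whole argument is essentially a spectral observation. First I would pin down the scalar: since $f$ is polarized, there are an ample Cartier divisor $D$ and an integer $q'>1$ with $f^*D\sim q'D$; taking $n$-th self-intersection gives $\deg f=(q')^n$, and comparing with the hypothesis $\deg f=q^n$ forces $q'=q$. Thus $f^*D\equiv qD$ with exactly the integer $q$ in the statement, and Corollary \ref{remcariter} applies to give that the invertible linear operator $\phi:=f^*|_{\textup{N}_{n-1}(X)}$ is diagonalizable over $\mathbb{C}$ with all eigenvalues of modulus $q$.

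Next, the hypothesis $f^*M\equiv_w M$ means precisely that the weak numerical class $[M]\in\textup{N}_{n-1}(X)$ is fixed by $\phi$, i.e.\ it lies in the eigenspace of $\phi$ for the eigenvalue $1$. Since $q>1$, the number $1$ has modulus different from $q$, hence is not an eigenvalue of $\phi$; therefore $[M]=0$ in $\textup{N}_{n-1}(X)$, which is exactly the assertion $M\equiv_w 0$. For the final clause, if moreover $M$ is $\mathbb{Q}$-Cartier, then its numerical class lies in $\textup{N}^1(X)$, which by \cite[Lemma 3.2]{zhang2016action} embeds into $\textup{N}_{n-1}(X)$ compatibly with the two equivalence relations; since the image of $[M]$ in $\textup{N}_{n-1}(X)$ vanishes and this embedding is injective, $[M]=0$ in $\textup{N}^1(X)$, i.e.\ $M\equiv 0$.

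I do not anticipate a genuine obstacle: the proof is short once Corollary \ref{remcariter} is in hand. The only two points that require a line of care are (i) verifying that the integer $q$ occurring in the definition of ``$f$ is polarized'' is the same $q$ as in $\deg f=q^n$, so that Corollary \ref{remcariter} is invoked with the correct scalar, and (ii) using that weak numerical triviality is literally the vanishing of the class in $\textup{N}_{n-1}(X)$ together with the injectivity and equivalence-compatibility of $\textup{N}^1(X)\hookrightarrow\textup{N}_{n-1}(X)$; neither involves any further computation.
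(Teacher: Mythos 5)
Your proposal is correct and takes essentially the same route as the paper: both deduce from Corollary \ref{remcariter} that $f^*|_{\textup{N}_{n-1}(X)}$ is diagonalizable with all eigenvalues of modulus $q>1$, then conclude that a $\phi$-fixed class must be zero, and finally pass through the embedding $\textup{N}^1(X)\hookrightarrow\textup{N}_{n-1}(X)$ of \cite[Lemma 3.2]{zhang2016action} for the $\mathbb{Q}$-Cartier case. The only difference is that you spell out the identification of the polarization scalar with the $q$ from $\deg f=q^n$ via the $n$-fold self-intersection, which the paper leaves implicit.
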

\begin{proof}
Since $f$ is polarized by an ample divisor $H$ on $X$, the linear operation $f^*|_{\textup{N}_{n-1}(X)}$ is diagonalizable with all eigenvalues of modulus $q>1$ (cf.~Corollary \ref{remcariter}). Suppose $M\not\equiv_w 0$. Then $1$ is an eigenvalue of the linear operation $f^*|_{\textup{N}_{n-1}(X)}$, a contradiction. Therefore, $M\equiv_w0$. Further, if $M$ is $\mathbb{Q}$-Cartier, then $M\equiv 0$ (cf.~\cite[Lemma 3.2]{zhang2016action}).
\end{proof}
We follow the idea of \cite[Lemma 2.7]{zhang2014invariant} to prove the following result. 
\begin{lem}\label{mmnxbxhwa}
Suppose $f:X\rightarrow X$ is a finite surjective endomorphism on a normal projective variety $X$ of dimension $n\ge 1$. Suppose further that $M$ is a pseudo-effective Weil divisor and $E$ is an effective $\mathbb{Q}$-divisor such that the following weakly numerical equivalence $(*)$ holds:                                                                                                                                                                                                                                                                                                                                                                                                                                                                                                                                                                                                                                                                                                                                                                                                                                                                                                                                                                                                                                                                                                                                                                                                                                                                                                                                                                                                                                                                                                                                                                                                                                                                                                                                                                                                                                                                                                                                                                                                                                                                                                                                                                                                                                                                                                                                                                                                                                                                                                                                                                                                                                                                                                                                                                                                                                                                                                                                                                                                                                                                                                                                                                                                                                                                                                                                                                                                                                                                                                                                                                                                                                                                                                                                                                                                                                                                                                                                                                                                                                                                                                                                                                                                                                                                                                                                                                                                                                                                                                                                                                                                                                                                                                                                                                                                                                                                                                                                                                                                                                                                                                                                                                                                                                                                                                                                                                                                                                                                                                                                                                                                                                                                                                                                                                                                                                                                                                                                                                                                                                                                                                                                                                                                                                                                                                                                                                                                                                                                                                                                                                                                                                                                                                                                                                                                                                                                                                                                                                                                                                                                                                                                                                                                                                                                                                                                                                                                                                                                                                                                                                                                                                                                                                                                                                                                                                                                                                                                                                                                                                                                                                                                                                                                                                                                                                                                                                                                                                                                                                                                                                                                                                                                                                                                                                                                                                                                                                                                                                                                                                                                                                                                                                                                                                                                                                                                                                                                                                                                                                                                                                                                                                                                                                                                                                                                                                                                                                                                                                                                                                                                                                                                                                                                                                                                                                                                                                                                                                                                                                                                                                                                                                                                                                                                                                                                                                                                                                                                                                                                                                                                                                                                                                                                                                                                                                                                                                                                                                                                                                                     
$$(*)~M\equiv_w f^*M+ E.$$
Then the effective $\mathbb{Q}$-divisor $E=0$.
\end{lem}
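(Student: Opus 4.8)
The plan is to iterate the weak numerical equivalence $(*)$ and exploit pseudo-effectivity in the limit. First I would rewrite $(*)$ as $M - f^*M \equiv_w E \geq 0$. Pulling back by $f^*$ (which is an automorphism of $\mathrm{N}_{n-1}(X)$, preserving the pseudo-effective cone $\mathrm{PE}(X)$) and telescoping, I obtain for every $k \geq 1$ the relation
\[
M \equiv_w (f^k)^* M + \sum_{i=0}^{k-1} (f^i)^* E.
\]
Since $M$ is pseudo-effective and each $(f^i)^*E$ is effective (hence pseudo-effective), every partial sum $\sum_{i=0}^{k-1}(f^i)^*E$ is bounded above, in the sense of the cone $\mathrm{PE}(X)$, by the fixed class of $M$. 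In other words, the classes $N_k := \sum_{i=0}^{k-1}(f^i)^*E$ form a sequence in $\mathrm{PE}(X)$ with $M - N_k \in \mathrm{PE}(X)$ for all $k$, and $N_{k+1} - N_k = (f^k)^*E \in \mathrm{PE}(X)$, so the sequence is ``monotone increasing and bounded'' inside $\mathrm{PE}(X)$.

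Next I would extract a contradiction unless $E = 0$. Fix an ample class $A$ (so $A$ lies in the interior of the dual cone, i.e. $A$ has strictly positive intersection with every nonzero class of $\mathrm{PE}(X)$). Intersecting the displayed identity with a suitable complete intersection curve, or more cleanly pairing the $\mathrm{N}_{n-1}$-classes against an ample divisor class via the perfect pairing $\mathrm{N}_{n-1}(X) \times \mathrm{N}^1(X) \to \mathbb{R}$ mentioned in the excerpt, gives a sequence of nonnegative real numbers $a_i := (f^i)^*E \cdot A^{?}$ whose partial sums stay below the fixed number $M \cdot A^{?}$. Hence $a_i \to 0$. On the other hand, if $E \neq 0$ then $E$ is a nonzero effective divisor, so $E \cdot A^{?} > 0$; and since $f$ is finite surjective, $(f^i)^*E$ is again a nonzero effective divisor, and I would argue its intersection with the fixed ample class does not tend to $0$ — for instance because $f_*(f^i)^*E = (\deg f)^{i} \cdot \text{(something with positive degree)}$, or because the eigenvalues of $f^*|_{\mathrm{N}_{n-1}(X)}$ controlling the growth of $(f^i)^*E$ have modulus $\geq 1$ (indeed $>1$ in the int-amplified setting, by Lemma \ref{prop3.3} applied on $\mathrm{N}^1$ and its extension to $\mathrm{N}_{n-1}$). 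This forces $E \equiv_w 0$, and since $E$ is an effective divisor, $E = 0$.

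The main obstacle I anticipate is the last step: controlling the asymptotics of $(f^i)^*E$ when $E$ is merely a Weil (not $\mathbb{Q}$-Cartier) divisor and $f$ is only assumed finite surjective, not int-amplified. One cannot immediately invoke eigenvalue estimates because $f^*|_{\mathrm{N}_{n-1}(X)}$ need not be semisimple and its eigenvalues could a priori have modulus $1$; the cleanest fix is probably to use the projection formula $f_* f^* = (\deg f)\,\mathrm{id}$ together with the telescoping identity applied after pushing forward, yielding $f_*^{k} M \equiv_w (\deg f)^k M + \sum (\deg f)^{\,k-1-i} f_*^{\,?}E$ — no, more carefully, push the identity $M \equiv_w (f^k)^*M + N_k$ forward by $(f^k)_*$ to get $(f^k)_* M \equiv_w (\deg f)^k M + (f^k)_* N_k$, rearrange, and observe that the positivity of all terms against an ample class, combined with $\deg f \geq 1$, pins down $E$. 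If $\deg f = 1$ one has to handle it separately, but then $f$ is an automorphism and $(*)$ plus pseudo-effectivity of $M$ forces $E = 0$ directly by averaging over the finite orbit. I would write up the clean version using the ample pairing and the boundedness of monotone sequences in the salient cone, relegating the degenerate automorphism case to a one-line remark.
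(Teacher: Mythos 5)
Your skeleton is exactly the paper's: telescope $(*)$ to get $M\equiv_w (f^k)^*M+\sum_{i=0}^{k-1}(f^i)^*E$, then pair against an ample class and use pseudo-effectivity of $M$ and of $(f^k)^*M$ to bound the partial sums $\sum_{i=0}^{k-1}\bigl((f^i)^*E\cdot H^{n-1}\bigr)\le (M\cdot H^{n-1})$. Up to this point you and the paper agree.

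The gap is in the closing step, which you explicitly flag as ``the main obstacle'' and then try to resolve by eigenvalue estimates, pushforward/degree bookkeeping, and a separate case for $\deg f=1$. None of these is needed, and none of them is carried to completion in your write-up. The clean fix the paper uses is simply to multiply $(*)$ by a positive integer at the outset so that $E$ becomes an \emph{integral} effective Weil divisor. Pullback under a finite surjective morphism preserves integrality and effectivity, so each $(f^i)^*E$ is a nonzero effective integral Weil divisor, and for a fixed ample Cartier divisor $H$ the number $\bigl((f^i)^*E\cdot H^{n-1}\bigr)$ is a \emph{positive integer}, hence $\ge 1$. Thus the partial sums are $\ge k$, which tends to infinity as $k\to\infty$, contradicting the fixed bound $(M\cdot H^{n-1})$. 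There is nothing to say about eigenvalues of $f^*|_{\mathrm{N}_{n-1}(X)}$, no semisimplicity issue, and no special case for automorphisms --- integrality alone makes the lower bound uniform, which is strictly stronger than the ``does not tend to $0$'' statement you were trying to establish. You should delete the eigenvalue and pushforward digressions and insert the ``clear denominators, then each term is a positive integer'' step; the rest of your argument then goes through verbatim.
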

\begin{proof}
Suppose $E>0$. Multiplying $(*)$ by a positive integer, we may assume $E$ is integral. Substituting the above expression of $M$ to the right-hand side $(k-1)$-times, we get
$$M\equiv_w (f^k)^*M+\sum\limits_{i=0}^{k-1}(f^i)^*E.$$
Taking a fixed ample Cartier divisor $H$ on $X$ and then using Nakai--Moishezon criterion (cf.~\cite[Theorem 1.37]{kollar2008birational}), we have the following 
$$(M\cdot H^{n-1})\ge \sum\limits_{i=0}^{k-1}((f^i)^*E\cdot H^{n-1}).$$
Since $E$ is integral, the right-hand side tends to infinity if we let $k\rightarrow\infty$, a contradiction. Hence, $E=0$.
\end{proof}

Now, we begin with the proof of Proposition \ref{promm100}. We follow the steps and use the similar method given in \cite[the proof of Proposition 2.12]{zhang2014invariant}. Besides, readers may refer to \cite[Lemma 2.8]{zhang2014invariant} for a further proof of Remark \ref{mm2.21}.
\begin{proof}
Suppose $s\ge n+1$. If $n=2$, then we may go to the end product $X_{n-2}$ with $n=2$. Therefore, we may further assume that initially $n\ge 3$ and $f^*H\sim qH$ for an ample divisor $H$ on $X$. Since the number of $V_j$ and the irreducible components of $V_j$ are finite, with $f$ replaced by its power, we may assume for each irreducible component $B_{ij}$ of $V_j$,  $f^{-1}(B_{ij})=B_{ij}$. By Corollary \ref{remcariter}, $f^*B_{ij}=qB_{ij}$. Note that Proposition \ref{promm100} still holds with $f$ replaced by its power since $f$ is \'etale away from $\cup V_j$ if so is its power.

We reduce the dimension of $X$ by continuously taking normalization of divisors. Then we prove Proposition \ref{promm100} by an early result for the surface case. Let $D_0=\sum V_j$ and consider the log ramification divisor formula for the pair $(X,D_0)$ (cf.~\cite[Theorem 11.5]{iitaka1982algebraic}):
\begin{equation}\label{mm0}
K_X+D_0=f^*(K_X+D_0)+\Delta_f,
\end{equation}
where $\Delta_f$ is an effective (integral) divisor, having no common components with $D_0$. 

Suppose further $\Delta_f>0$. Since $V_1$ is ample, $(V_1^{n-1}\cdot \Delta_f)>0$. Fix a component $B_0$ of $V_1$ intersecting $\Delta_f$ and take the normalization of $B_0$ followed by the inclusion map:
\begin{equation}\label{mm1}
\sigma_1: X_1=\widetilde{B_0}\rightarrow B_0\hookrightarrow X_0=X.
\end{equation}
Then one can get a commutative diagram (by the universal property of normalization):
$$f_1^*(\sigma_1^*H)=\sigma_1^*(f^*H)\sim q\sigma_1^*H,$$
which means that this lifting $f_1$ is polarized by $\sigma_1^* H$ and $\deg f_1=q^{n-1}$.
Pulling back Equation (\ref{mm0}) along the map $\sigma_1$, we have the following:
\begin{equation}\label{mm2}
K_{X_1}+D_1:=\sigma_1^*(K_X+D_0)=\sigma_1^*(f^*(K_X+D_0)+\Delta_f)=f_1^*(K_{X_1}+D_1)+\sigma_1^*\Delta_f.
\end{equation}
Here, $D_1=\textup{Diff}(D_0)$ contains the reduced Weil divisor $\sum_{j\ge 2}\textup{Supp}\,\sigma_1^*V_j$ (cf.~ \cite[Corollary 3.11]{shokurov91}, \cite[Corollary 16.7]{kollar1992flips} and Lemma \ref{lem1}). Besides, $(X_1,D_1)$ is log canonical by Proposition \ref{normallc}.
Note that $V_j|_{B_0}(j>1)$ is  ample on $B_0$ and the normalization is a finite surjective morphism. Hence, each $\sigma_1^* V_j\,(j>1)$ is nonzero and still ample on $X_1$ (cf.~\cite[Theorem 1.37]{kollar2008birational}). Moreover, $\textup{Supp}\,\sigma_1^*V_j\,(j>1)$ is connected since $\dim X_1=n-1\ge 2$  (cf.~\cite[Corollary 7.9]{hartshorne2013algebraic}).

By the choice of $B_0$, $\Delta_f|_{B_0}$ is a nonzero effective $\mathbb{Q}$-divisor on $B_0$. Since the normalization is  finite, $\sigma_1^*\Delta_f$ is a nonzero effective $\mathbb{Q}$-divisor on $X_1$. Repeatedly, we fix an (integral) irreducible component $B_1$ of $\textup{Supp}\,\sigma_1^* V_2~(\le D_1)$ intersecting $\sigma_1^*\Delta_f$. This is possible since $\sigma_1^*V_2$ is ample on $X_1$. With $f$ replaced by its power, we may assume $f_1^{-1}(B_1)=B_1$. Since $f_1$ is polarized, by Corollary \ref{remcariter}, we have $f_1^*B_1=qB_1$. Taking the normalization of $B_1$ followed by the inclusion map, we get $\sigma_2: X_2\rightarrow X_1$ with $\dim X_2=n-2$. Similarly, $f_1|_{B_1}$ lifts to a polarized endomorphism $f_2$ of $X_2$ with $\deg f_2=q^{n-2}$.

In general, let $\sigma_i: X_i\rightarrow X_{i-1}~(1\le i\le n-2)$ be the normalization of an (integral) irreducible component $B_{i-1}$ of $\textup{Supp}\,(\sigma_1\circ\cdots\circ\sigma_{i-1})^* V_i~(\le D_{i-1})$ intersecting $(\sigma_1\circ\cdots\circ\sigma_{i-1})^*\Delta_f$ followed by an inclusion map. Then we get  $f_i: X_i\rightarrow X_i$ polarized by the pullback $(\sigma_1\circ\cdots\circ\sigma_i)^*H$, $\dim X_i=n-i$ and $\deg f_i=q^{n-i}$ (cf.~\cite[Lemma 2.1]{nakayama2010polarized}).
Let $\sigma$ be the composition,
$$\sigma:=\sigma_1\circ\cdots\sigma_{n-2}: X_{n-2}\longrightarrow X_0=X.$$

Now, $S:=X_{n-2}$ is a normal surface with following ample reduced divisors on $S$:
$$C_i:= \textup{Supp}\,\sigma^*V_i~(n-1\le i\le s).$$
After replacing $f$ by its power, we may assume $f_{n-2}^*C_{ij}=q C_{ij}$ for each irreducible component $C_{ij}$ of $C_i~(i\ge n-1)$. On the one hand, using the log ramification divisor formula for the pair $(S,C:=\sum_{i\ge n-1} C_i)$ (cf.~\cite[Theorem 11.5]{iitaka1982algebraic}), we have
\begin{equation}\label{mm3}
K_S+C=f_{n-2}^*(K_S+C)+\Delta.
\end{equation}
Here, $\Delta$ is an effective (integral) Weil divisor, sharing no common components with $C$. On the other hand, pulling back Equation (\ref{mm0}) along the map $\sigma$, we get
\begin{equation}\label{mmnznl}
K_S+D_{n-2}:=\sigma^*(K_X+D_0)=f_{n-2}^*(K_S+D_{n-2})+\sigma^*\Delta_f.
\end{equation}

Comparing Equation (\ref{mm3}) with (\ref{mmnznl}), we get $C\le D_{n-2}$ (cf.~ \cite[Corollary 3.11]{shokurov91} and \cite[Corollary 16.7]{kollar1992flips}), and thus $M:=D_{n-2}-C\ge 0$. By assumption, the number of $C_i$, $r\ge s-(n-2)\ge 3$. 

We will show that $s=n+1$. Indeed, note that each $C_i~(n-1\le i\le s)$ is connected and ample, so $(C_i\cdot C_j)>0$ for $n-1\le i,j\le s$. Let $D=\sum_{j=n-1}^{n+1}C_j\le C$. Then the dual graph of $D$  contains a loop.  By \cite[Lemma 2.8]{zhang2014invariant}, $K_S+D\sim 0$. Back to Equation (\ref{mmnznl}), we have the following:
\begin{equation}\label{mm520}
M+C-D\equiv_w f_{n-2}^*(M+C-D)+\sigma^*\Delta_f.
\end{equation}
Since $\sigma^*\Delta_f$ is an effective $\mathbb{Q}$-divisor, by Lemmas \ref{lemnumer} and \ref{mmnxbxhwa}, $M+C-D\equiv_w 0$ and $\sigma^*\Delta_f=0$.  Thus, $M=C-D=0$ by the effectivity of $M$ and $C-D$. As a result, we have $D_{n-2}=C=D$, which in turn implies that $s=n+1$.

Recall our initial assumption that $\Delta_f\neq 0$. Then by our choice of each $B_i$ and $\sigma$, $\sigma^*\Delta_f$ is a nonzero effective $\mathbb{Q}$-divisor on $S$. However, by Equation (\ref{mm520}) and Lemma \ref{mmnxbxhwa}, we have already got $\sigma^*\Delta_f=0$, a contradiction. Therefore, $\Delta_f=0$ and the ramification divisor of $f$, $R_f=(q-1)D_0$. By the purity of branch loci, $f$ is \'etale outside $(\bigcup V_j)\cup f^{-1}(\textup{Sing}\,X)$. Further, since $\Delta_f=0$, we get $K_X+D_0\equiv 0$ (cf.~ Equation (\ref{mm0}) and Lemma \ref{lemnumer}), which completes the proof of Proposition \ref{promm100}.
 \end{proof}
 
\begin{remark}\label{mm2.24}
\textup{For Proposition \ref{promm100}, when the equality holds, i.e.\,$s=n+1$, we claim that $D_1$ (and then each $D_i$) is reduced. With the same symbols given above, we follow the ideas and steps of \cite[the proof of Theorem 1.1]{zhang2014invariant} to prove it as follows.}
\end{remark}
\begin{proof}
Note that $\Delta_f=0$. Suppose $\Gamma_1=mF_1\le D_1$ is a  non-reduced fractional component with $m<1$.                                                                                                                                                                                                                                                                                                                                                                                                                                                                                                 Since $\sigma_1^*V_2$ is ample on $X_1$, the intersection number $(\Gamma_1\cdot (\sigma_1^*V_2)^{\dim X_1-1})>0$. Therefore, there exists an (integral) irreducible component of $\textup{Supp}\,\sigma_1^*V_2\,(\le D_1)$, intersecting $F_1$. Let this component be our new $B_1$ as in the proof of Proposition \ref{promm100}. Then taking the normalization of $B_1$ followed by the inclusion map, we get
$$K_{X_2}+D_2=f_2^*(K_{X_2}+D_2).$$ 
Then we get the nonzero pullback $\Gamma_2':=\sigma_2^*\Gamma_1\le D_2$  by the choice of $B_1$. Fix an irreducible component $\Gamma_2$ of $\Gamma_2'$, which has no common components with $\textup{Supp}\,\sigma_1^*V_j(j\ge n-1)$.

In general, suppose we have fixed a component $\Gamma_k$ of the pullback of $\Gamma_{k-1}$. Consider the pullback of $V_{k+1}$, which is ample on $X_k$. Choosing the (integral) irreducible component $B_k$ which intersects $\Gamma_k$, we take the normalization of $B_k$ followed by the inclusion.

Finally, we get a component $\Gamma_{n-2}\le D_{n-2}$ on $S$. Since we have proved that $\Delta_f=0$ for the case when $s=n+1$ and it is independent of our choice of $B_i$ and $\sigma$, we get
$$K_S+D_{n-2}=f_{n-2}^*(K_S+D_{n-2}).$$
Note that $K_S+D_{n-2}$ is the pullback of $K_X+D_0$ under our new $\sigma$. Similarly, comparing the above equation with Equation (\ref{mm3}) and (\ref{mmnznl}), we have $M=D_{n-2}-C\ge 0$ and 
$f_{n-2}^*M=M$. Therefore, $M$ is numerically trivial (cf.~ Lemma \ref{lemnumer}), which in turn gives $M=0$. Therefore, $C=D_{n-2}$ and such $\Gamma_{n-2}$ cannot exist since $C$ is the support of the pullback of $\sum_{j=n-1}^{n+1}V_j$ and thus there is no more space for $\Gamma_{n-2}$. As a result, our assumption is absurd.
\end{proof}
The following lemma is known to Iitaka, Sommense, Fujimota and Nakayama (cf.~\cite[Lemma 3.7.1]{nakayama2008complex}), we rewrite it for the convenience of readers.
\begin{lem}[cf.~\cite{nakayama2008complex}]\label{psecod1}
Let $X$ be a normal projective variety of dimension $n$ and $f:X\rightarrow X$ an endomorphism with $\deg (f)\ge 2$. Suppose the canonical divisor $K_X$ is a pseudo-effective $\mathbb{Q}$-Cartier divisor. Then $f$ is \'etale in codimension one.
\end{lem}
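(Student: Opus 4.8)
### Proof strategy for Lemma \ref{psecod1}

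The plan is to reduce the statement to the log ramification formula and use the pseudo-effectivity of $K_X$ to force the ramification divisor to be empty. First I would write down the ramification divisor formula for the finite surjective endomorphism $f: X \to X$ on the normal variety $X$: since $X$ is regular in codimension one, there is a well-defined effective Weil divisor $R_f$ supported on the ramification locus such that
$$K_X \sim_{\mathbb{Q}} f^*K_X + R_f,$$
where $R_f = \sum (e_i - 1) P_i$ over the codimension-one points $P_i$ with ramification index $e_i$. The goal is precisely to show $R_f = 0$, because then by the purity of the branch locus (Zariski--Nagata) $f$ is étale in codimension one.

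The key step is an asymptotic argument analogous to Lemma \ref{mmnxbxhwa}. Iterating the formula $k$ times gives
$$K_X \sim_{\mathbb{Q}} (f^k)^*K_X + \sum_{i=0}^{k-1} (f^i)^* R_f.$$
Now I would use that $\deg f \ge 2$ together with the pseudo-effectivity of $K_X$. Fix an ample Cartier divisor $H$ on $X$. Intersecting with $H^{n-1}$ and using that $K_X$ is pseudo-effective while $(f^i)^*R_f$ is effective, one has to control $((f^k)^*K_X \cdot H^{n-1})$ from below; since $f_*f^* = (\deg f)\,\mathrm{id}$, one finds $((f^k)^*K_X\cdot H^{n-1}) = (K_X \cdot (f^k)_*H^{n-1})$, and $(f^k)_*H^{n-1}$ is an effective $1$-cycle, so pseudo-effectivity of $K_X$ gives $((f^k)^*K_X\cdot H^{n-1}) \ge 0$ (more carefully, one may need $K_X$ nef on the relevant cone, or instead take the numerical dimension / use that a pseudo-effective divisor has nonnegative intersection with movable curves, and $(f^k)_*H^{n-1}$ is movable). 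Meanwhile $\sum_{i=0}^{k-1}((f^i)^*R_f \cdot H^{n-1}) \ge (R_f \cdot H^{n-1})$ if $R_f \ne 0$, and in fact this sum grows since each $(f^i)^*R_f$ has $((f^i)^*R_f \cdot H^{n-1}) = (R_f \cdot (f^i)_*H^{n-1})$; bounding $(K_X \cdot H^{n-1})$ above by a constant independent of $k$ then forces $R_f \equiv_w 0$, hence $R_f = 0$ by effectivity.

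The main obstacle is making the intersection-number estimate rigorous without assuming $K_X$ is $\mathbb{Q}$-Cartier-nef: pseudo-effectivity of a $\mathbb{Q}$-Cartier divisor only guarantees nonnegative intersection against movable curve classes, so I would need to check that the classes $(f^k)_* H^{n-1}$ (equivalently $(f^k)^*$-images in $\textup{N}_1(X)$) are movable or at least pseudo-effective curve classes — which holds because $H^{n-1}$ is represented by irreducible curves moving in a family covering $X$ and pushforward under a finite surjective map preserves this. Alternatively, and more cleanly, I would invoke that $f^*|_{\textup{N}^1(X)}$ (or on $\textup{N}_{n-1}(X)$) has all eigenvalues of modulus $>1$ when $f$ is, say, made int-amplified — but since here $f$ is only assumed to have $\deg f \ge 2$, the honest route is the direct asymptotic intersection estimate above, exactly in the spirit of Lemma \ref{mmnxbxhwa}. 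Once $R_f = 0$ is established, purity of the branch locus finishes the proof.
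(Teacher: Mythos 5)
The paper does not actually prove this lemma; it simply cites Nakayama's preprint \cite{nakayama2008complex}, so there is no proof of record to compare against. Your proposal is, however, the standard argument and it is essentially correct: write $K_X = f^*K_X + R_f$ with $R_f$ effective and integral, iterate, intersect with $H^{n-1}$, and force $R_f = 0$; then purity of the branch locus gives \'etaleness in codimension one. You correctly recognize that this is exactly the mechanism of Lemma~\ref{mmnxbxhwa}, and in fact one can apply Lemma~\ref{mmnxbxhwa} verbatim with $M = K_X$ and $E = R_f$ (the hypothesis that $M$ be a pseudo-effective Weil divisor is exactly what $K_X$ being pseudo-effective gives you), which makes the proof essentially a one-liner modulo that lemma.

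Two points where you make the argument harder than it needs to be. First, the detour through movable curve classes to justify $((f^k)^*K_X \cdot H^{n-1}) \ge 0$ is unnecessary: since $f^k$ is finite surjective, $(f^k)^*$ preserves the pseudo-effective cone (pullback of big is big, then take closures), so $(f^k)^*K_X$ is itself pseudo-effective $\mathbb{Q}$-Cartier, and any pseudo-effective $\mathbb{Q}$-Cartier divisor paired with $H^{n-1}$ ($H$ ample) is nonnegative by continuity from the effective case. Second, your justification that the sum $\sum_{i=0}^{k-1}((f^i)^*R_f \cdot H^{n-1})$ diverges --- via the projection formula rewriting $((f^i)^*R_f \cdot H^{n-1}) = (R_f \cdot (f^i)_*H^{n-1})$ --- is not actually a lower bound. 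The clean reason is the one Lemma~\ref{mmnxbxhwa} uses: $R_f$ is an integral effective Weil divisor, so each $(f^i)^*R_f$ is a nonzero integral effective Weil divisor, and after replacing $H$ by a very ample multiple one has $((f^i)^*R_f \cdot H^{n-1}) \ge 1$ for every $i$, whence the sum is at least $k$ and the bound $(K_X \cdot H^{n-1}) \ge k$ for all $k$ is absurd. With those two substitutions your proof is complete and is exactly the intended one.
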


\section{Proof of Theorem \ref{thm1.1}}\label{section3}
~~~~In this section, we begin with our proof of Theorem \ref{thm1.1}.
We follow the steps in \cite[The proof of Theorem 1.3]{zhang2014invariant} for the case of polarized endomorphisms. The main idea is to run MMP and reduce the dimension of $X$ gradually. Then we use the induction. In the beginning, we introduce a key proposition (cf.~\cite[The proof of Lemma 2.8]{zhang2014invariant}).
\begin{pro}[cf.~\cite{zhang2014invariant}]\label{prou1}
Suppose $X$ is a rationally connected variety with at worst klt singularities. Then the Picard group $\textup{Pic}(X)$ is torsion free and $\textup{Pic}(X)\cong \textup{NS}(X)$.
\end{pro}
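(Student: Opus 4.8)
The plan is to prove the two assertions in turn: first that $\textup{Pic}(X)$ is torsion free, and then that the natural surjection $\textup{Pic}(X)\twoheadrightarrow \textup{NS}(X)$ is an isomorphism, i.e.\,that $\textup{Pic}^\circ(X)=0$. Both will follow from vanishing of suitable cohomology of $\mathcal{O}_X$, which in turn is a consequence of rational connectedness together with the klt hypothesis. First I would pass to a resolution of singularities $\pi:\widetilde{X}\to X$. By Proposition \ref{prorc}(2), rational connectedness is a birational invariant among proper varieties, so $\widetilde{X}$ is again rationally connected; being smooth and proper, Theorem \ref{scc} shows $\widetilde{X}$ is simply connected with respect to the complex topology. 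Hence $\pi_1(\widetilde{X})=1$, and Lemma \ref{univ} gives $q(\widetilde{X})=h^1(\widetilde{X},\mathcal{O}_{\widetilde{X}})=0$. Moreover a smooth rationally connected variety has $h^i(\widetilde X,\mathcal{O}_{\widetilde X})=0$ for all $i\ge 1$ (the classical fact that rationally connected varieties have vanishing higher coherent cohomology of the structure sheaf); in particular $h^2(\widetilde X,\mathcal{O}_{\widetilde X})=0$.

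Next I would descend these vanishings to $X$. Since $X$ is klt it has rational singularities, so $R^j\pi_*\mathcal{O}_{\widetilde X}=0$ for $j\ge 1$ and $\pi_*\mathcal{O}_{\widetilde X}=\mathcal{O}_X$; the Leray spectral sequence then yields $h^i(X,\mathcal{O}_X)=h^i(\widetilde X,\mathcal{O}_{\widetilde X})$ for all $i$, so $h^1(X,\mathcal{O}_X)=h^2(X,\mathcal{O}_X)=0$. Now the exponential sequence on $X$ (valid since $X$ is a complex analytic space with a well-behaved exponential sheaf sequence, $X$ being at worst klt hence normal with rational singularities) gives
\[
\textup{H}^1(X,\mathcal{O}_X)\longrightarrow \textup{H}^1(X,\mathcal{O}_X^{*})\longrightarrow \textup{H}^2(X,\mathbb{Z})\longrightarrow \textup{H}^2(X,\mathcal{O}_X).
\]
With $\textup{H}^1(X,\mathcal{O}_X)=0$ the map $\textup{Pic}(X)=\textup{H}^1(X,\mathcal{O}_X^{*})\hookrightarrow \textup{H}^2(X,\mathbb{Z})$ is injective, which already forces $\textup{Pic}^\circ(X)=0$, hence $\textup{Pic}(X)\xrightarrow{\ \sim\ }\textup{NS}(X)$. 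For torsion-freeness: the torsion of $\textup{H}^2(X,\mathbb{Z})$ equals, by the Universal Coefficient Theorem, the torsion of $\textup{H}_1(X,\mathbb{Z})=\pi_1(X)^{\textup{ab}}$. By Lemma \ref{lsco} the resolution map induces an isomorphism $\pi_1(\widetilde X)\xrightarrow{\sim}\pi_1(X)$, and $\pi_1(\widetilde X)=1$, so $\pi_1(X)=1$ and $\textup{H}^2(X,\mathbb{Z})$ is torsion free; being a subgroup of it, $\textup{Pic}(X)$ is torsion free as well.

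An alternative, purely algebraic route avoiding the analytic exponential sequence is to invoke the comparison $\textup{NS}(X)=\textup{Pic}(X)/\textup{Pic}^\circ(X)$ directly: $\textup{Pic}^\circ(X)$ is an abelian variety (or extension thereof) whose tangent space at the origin is $\textup{H}^1(X,\mathcal{O}_X)=0$, so $\textup{Pic}^\circ(X)=0$ and $\textup{Pic}(X)=\textup{NS}(X)$; then torsion-freeness of $\textup{NS}(X)$ follows from $\pi_1(X)=1$ via $\textup{NS}(X)_{\textup{tors}}\cong \textup{Hom}(\pi_1(X),\mathbb{Q}/\mathbb{Z})$ (finite étale covers correspond to torsion line bundles). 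I expect the main technical point to be the descent of cohomology and $\pi_1$ through the resolution $\pi$; this is exactly where the klt hypothesis is essential (via rational singularities for the cohomology, and via Lemma \ref{lsco} for the fundamental group), and care is needed to ensure $X$ itself — not merely $\widetilde X$ — inherits simple connectedness and the vanishing $q(X)=0$. Everything else is assembling standard consequences of rational connectedness (Theorem \ref{scc}, Lemma \ref{univ}) that are already recorded in the preliminaries.
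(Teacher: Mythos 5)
Your proof is correct, and the two arguments share the same backbone: both resolve $X$, deduce $\pi_1(\widetilde X)=1$ via Theorem \ref{scc}, descend $q=0$ and $\pi_1=1$ to $X$ using the klt hypothesis (the paper cites Koll\'ar--Mori directly for the birational invariance of $h^1(\mathcal{O})$, you go through rational singularities and Leray — same content), and get $\textup{Pic}^\circ(X)=0$ from $q(X)=0$. Where you genuinely diverge is the torsion-freeness step. The paper's proof is algebraic: a torsion line bundle $\mathcal{O}(D)$ with $nD\sim 0$ gives a connected degree-$n$ cyclic \'etale cover of $X$, which is impossible because $\pi_1(X)=1$ forces $X$ to be algebraically simply connected, so $n=1$. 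You instead use the exponential sequence to embed $\textup{Pic}(X)\hookrightarrow H^2(X,\mathbb{Z})$ (needing only $h^1(X,\mathcal{O}_X)=0$; the $h^2$ vanishing you record is superfluous) and then identify the torsion of $H^2(X,\mathbb{Z})$ with that of $H_1(X,\mathbb{Z})=\pi_1(X)^{\textup{ab}}=0$ via the Universal Coefficient Theorem. This topological route is arguably more economical and re-derives $\textup{Pic}^\circ(X)=0$ in the same stroke, at the cost of invoking the analytic comparison (GAGA) implicitly; the paper's cyclic-cover argument is closer to the algebraic vocabulary it has already set up (algebraic simple connectedness in the sense of Poonen's book). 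Your sketched ``alternative, purely algebraic route'' is essentially the paper's argument; just note that the identification $\textup{NS}(X)_{\textup{tors}}\cong \textup{Hom}(\pi_1(X),\mathbb{Q}/\mathbb{Z})$ as you wrote it is not quite the correct formulation of the torsion-line-bundle/\'etale-cover correspondence, though the conclusion you draw from $\pi_1(X)=1$ is right.
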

To prove Proposition \ref{prou1}, we need the following well-known fact.
\begin{lem}\label{lempc}
Suppose $X$ is a normal projective variety. Then the set of $\mathbb{C}$-points $X(\mathbb{C})$ is path-connected with respect to complex topology.
\end{lem}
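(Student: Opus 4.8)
The assertion is the classical comparison between the Zariski and the complex topologies, so the plan is to reduce to that statement and then pass from connectedness to path-connectedness by a soft argument. First I would record that $X(\mathbb{C})$, with the complex topology, is \emph{locally} path-connected: it is locally homeomorphic to a complex analytic subset of some $\mathbb{C}^N$, and every such set is triangulable (by the classical results of {\L}ojasiewicz and Hironaka), hence locally contractible, in particular locally connected and locally path-connected. Consequently, for $X(\mathbb{C})$ the properties ``connected'' and ``path-connected'' coincide, and its connected components are open (and closed). So it suffices to prove that $X(\mathbb{C})$ is connected in the complex topology.

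For connectedness, set $d=\dim X$ and $U:=X\setminus\mathrm{Sing}\,X$. Since $X$ is a variety it is irreducible, so $U$ is a smooth connected complex variety and its analytification $U(\mathbb{C})$ is a connected complex manifold; this is the comparison theorem in the smooth case, which I would simply cite (alternatively one sees it directly: a generic finite projection $U\to\mathbb{A}^d$ — e.g.\ via Noether normalization on an affine open — is a finite covering over a Zariski-open $V\subseteq\mathbb{A}^d$ whose monodromy is transitive because $U$ is irreducible, so $\pi^{-1}(V)(\mathbb{C})$ is connected and, being the complement of a proper closed subvariety, analytically dense in $U(\mathbb{C})$). In particular $U(\mathbb{C})$ is connected, hence contained in a single connected component $C$ of $X(\mathbb{C})$. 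If $C'\neq C$ were another component, then $C'$ is a nonempty open subset of $X(\mathbb{C})$ disjoint from $U(\mathbb{C})$, hence $C'\subseteq(\mathrm{Sing}\,X)(\mathbb{C})$. But $X(\mathbb{C})$ is purely $d$-dimensional, since $\dim_x X(\mathbb{C})=\dim\mathcal{O}_{X,x}=d$ for every closed point $x$ (as $X$ is irreducible of dimension $d$), so every nonempty open subset of $X(\mathbb{C})$ is again purely $d$-dimensional, while $\dim(\mathrm{Sing}\,X)(\mathbb{C})=\dim\mathrm{Sing}\,X<d$ — a contradiction. Therefore $X(\mathbb{C})=C$ is connected, and by the first paragraph it is path-connected.

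The only ingredient above that is not pure point-set topology or standard dimension theory is the connectedness of the analytification of a smooth irreducible complex variety, i.e.\ the Zariski--analytic comparison theorem in the smooth case; I expect this to be the sole real obstacle, and I would invoke it as a known result rather than reprove it. I also note that the normality hypothesis makes the local part cheaper still: a normal complex variety is analytically locally irreducible, hence locally connected, so ``locally connected'' is available without appealing to triangulability, and the same local irreducibility is exactly what guarantees that proper closed subvarieties are analytically nowhere dense in $X(\mathbb{C})$.
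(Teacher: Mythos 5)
The paper does not actually supply a proof of Lemma~\ref{lempc}: it is introduced with ``we need the following well-known fact'' and then used directly in the proof of Proposition~\ref{prou1}. So there is no argument in the paper to compare against, and your proposal should be judged on its own. Your argument is correct. The reduction from path-connectedness to connectedness via local path-connectedness of analytic sets (triangulability \`a la {\L}ojasiewicz/Hironaka) is the standard move, and the connectedness step --- $U=X\setminus\mathrm{Sing}\,X$ is smooth irreducible, its analytification is connected by the comparison theorem (or your Noether-normalization/monodromy sketch), and any other open-and-closed component would be an open set contained in $(\mathrm{Sing}\,X)(\mathbb{C})$, impossible since $X(\mathbb{C})$ has pure local analytic dimension $d=\dim X$ while $\mathrm{Sing}\,X$ has smaller dimension --- is a clean way to package it. One remark worth making explicit: normality is not in fact needed for the statement. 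The classical comparison results (e.g.\ SGA~1, Exp.~XII) already give that the analytification of any connected $\mathbb{C}$-variety is connected and locally contractible, hence path-connected; the paper presumably restricts to normal varieties only because that is the ambient hypothesis throughout. You correctly observe this, noting that normality merely gives a cheaper route to local connectedness via analytic local irreducibility.
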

Now, we prove Proposition \ref{prou1}.
\begin{proof}
Take any resolution $g: S\rightarrow X$. Then since $g$ is birational, $S$ is smooth and rationally connected by Proposition \ref{prorc}. Also, Theorem \ref{scc} tells us that $S$ has a trivial fundamental group. By Lemma \ref{univ}, the irregularity of $S$, $q(S)=0$. Now, since $X$ is klt, we have $q(X)=q(S)=0$ (cf.~\cite[Theorem 5.22 and Definition 5.8]{kollar2008birational}), which implies that $\textup{Pic}^\circ(X)=0$. Moreover, by Lemma \ref{lsco}, $\pi_1(X)\cong\pi_1(S)=\{1\}$.

Now, since $\textup{Pic}^\circ(X)=0$, $\textup{Pic}(X)\cong \textup{NS}(X)$, which is finitely generated. Suppose there exists some invertible sheaf $\mathcal{L}=\mathcal{O}(D)$, where $D$ is Cartier such that $nD\sim 0$ for some (minimal) integer $n\ge 1$. Then there exists an unramified cyclic cover $p: X_{n,D}\rightarrow X$ of degree $n$, which is a finite \'etale morphism (cf.~\cite[Definition 2.49]{kollar2008birational}).
Since $X$ is normal and projective, $X(\mathbb{C})$ is path-connected by Lemma \ref{lempc}. Then $\pi_1(X)=\{1\}$ implies that $X$ is simply connected with respect to complex topology and thus algebraically simply connected, which means there does not exist any nontrivial connected finite \'etale cover. Therefore, $n=1$ and $\mathcal{O}(D)$ is trivial. In conclusion, the Picard group of any rationally connected variety with only klt singularities over $\mathbb{C}$ is torsion free. This completes the proof of Proposition \ref{prou1}.
\end{proof}
Now, we use the induction on $\dim X$ to prove Theorem \ref{thm1.1}.
\subsection{The Case $\dim X=1$}
\begin{lem}\label{lemdimx=1}
Theorem \ref{thm1.1} holds for the case when $\dim X=1$.
\end{lem}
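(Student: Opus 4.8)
The case $\dim X = 1$ should be nearly immediate once we identify what $X$ and the $V_j$ must look like. Since $X$ is a normal projective variety of dimension one over $\mathbb{C}$ with klt singularities, it is automatically a smooth projective curve. The variety $X$ admits an int-amplified endomorphism $f$; by Lemma \ref{prop3.3} (or the remark following it), $\deg f > 1$, so $f$ has positive entropy in the naive sense that $f^*$ acts on $\mathrm{N}^1(X) \cong \mathbb{R}$ by multiplication by $\deg f > 1$. A prime divisor on a curve is just a point, so $\mathrm{TI}_f(X)$ is a finite set of totally invariant (closed) points, and $c = \#\mathrm{TI}_f(X)$.

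First I would pin down which curves can occur. A surjective endomorphism of degree $>1$ exists on a smooth projective curve $X$ only if $g(X) \le 1$: for $g \ge 2$ the Hurwitz formula forces $\deg f = 1$. So $X = \mathbb{P}^1$ or $X$ is an elliptic curve. In the elliptic curve case, $f$ is (up to translation) an isogeny; an isogeny of degree $>1$ has no totally invariant point unless one first translates so that a fixed point exists, but even then $f^{-1}(\text{pt})$ has $\deg f > 1$ points, so no point is totally invariant: thus $c = 0$, and every assertion of Theorem \ref{thm1.1} with hypothesis $c \ge n + \rho - 2 = 0$ forces us into the "latter case'' with the fibration $X \to E$ being the identity $X = E$; parts (1)--(4) with their respective hypotheses on $c$ are then checked directly (for instance $c \ge n+\rho-2$ holds vacuously, and $(f^t)^*$ on $\mathrm{NS}_{\mathbb{Q}}(X) \cong \mathbb{Q}$ is a scalar, hence diagonalizable). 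In the case $X = \mathbb{P}^1$ we have $n = 1$, $\rho = 1$, so $n + \rho = 2$; the known description of endomorphisms of $\mathbb{P}^1$ (or a direct argument with $f^*$ acting on points) shows $c \le 2$, with $c = 2$ exactly when $f$ is conjugate to a power map $z \mapsto z^{\pm d}$, and then the two totally invariant points are $0$ and $\infty$, $K_{\mathbb{P}^1} + \{0\} + \{\infty\} = \mathcal{O}(-2+1+1) \sim 0$, and $f$ is étale away from $\{0,\infty\}$. Since $\mathbb{P}^1$ is rationally connected and simply connected, and $f^*$ on $\mathrm{Pic}(\mathbb{P}^1) \cong \mathbb{Z}$ is scalar multiplication, all of (1)--(4) follow.

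Concretely I would organize it as: (a) reduce to $X$ smooth of genus $0$ or $1$; (b) in genus $1$, show $c = 0$ and verify (1)--(4) vacuously or trivially, recording that the fibration onto an elliptic curve is $\mathrm{id}\colon X \to X$ and $f$ already descends (to itself); (c) in genus $0$, bound $c$ by $2 = n + \rho$ using the structure of self-maps of $\mathbb{P}^1$ of degree $>1$, handle the cases $c = 0, 1, 2$ separately, and for $c = 2$ exhibit the power-map normal form to get $K_X + \sum V_j \sim_{\mathbb{Q}} 0$ and the étale statement; (d) in all genus-$0$ cases note $X = \mathbb{P}^1$ is rationally connected and simply connected and $f^*|_{\mathrm{Pic}(X)}$ is a $1\times 1$ diagonal matrix. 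The uniruledness claim in (1) when $c \ge 1$ is clear since $\mathbb{P}^1$ is uniruled and an elliptic curve with $c \ge 1$ does not arise.

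**Main obstacle.** There is no serious obstacle here; the only mild care needed is the bookkeeping in part (2) for the elliptic-curve case, namely interpreting the dichotomy "rationally connected and simply connected \emph{or} fibration onto an elliptic curve'' correctly when $X$ itself is the elliptic curve — one must take the fibration to be the identity and check that $f$ (a power of it) descends to an int-amplified endomorphism of $E = X$, which is automatic, and that $(f^t)^*$ on $\mathrm{NS}_{\mathbb{Q}}$ is diagonalizable, which is trivial in dimension one. The genus bound $g \le 1$ via Hurwitz and the classification of degree-$>1$ self-maps of $\mathbb{P}^1$ with two totally invariant points are both standard and can be cited or done in a line.
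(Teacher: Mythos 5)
Your proposal is correct and follows essentially the same outline as the paper: use Hurwitz to force $g(X)\le 1$, observe that in the elliptic case every self-map of degree $>1$ is \'etale so $c=0$, and in the rational case bound $c\le 2$, then verify the four statements directly using $\rho(\mathbb{P}^1)=1$, rationality, and simple connectedness. The one place where your route diverges from the paper's is how $c\le 2$ and the $c=2$ case are handled on $\mathbb{P}^1$: you appeal to the classification of degree-$>1$ self-maps and a power-map normal form, whereas the paper simply applies the log ramification divisor formula $K_X+\sum V_i = f^*(K_X+\sum V_i) + R_f'$ and takes degrees, getting $(1-\deg f)(c-2)=\deg R_f'\ge 0$, hence $c\le 2$; this avoids any classification and immediately yields $K_X+\sum V_j\sim 0$ and $R_f'=0$ (hence the \'etale claim) when $c=2$. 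One small inaccuracy in your normal-form statement: for $z\mapsto z^{-d}$ the points $0$ and $\infty$ are swapped, so neither is individually $f^{-1}$-invariant and $c=0$ there, not $2$; the $c=2$ case corresponds to $z\mapsto z^{d}$ only (or after replacing $f$ by $f^2$). You also leave the log-canonicity of $(\mathbb{P}^1,\sum V_j)$ implicit in part (1), which the paper checks by a one-line discrepancy computation; that step is trivial but should be said.
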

In this case, $X$ being normal is equivalent to $X$ being smooth. We give the proof of Lemma \ref{lemdimx=1} as follows.
\begin{proof}
It's well known that a curve with the genus $g(X)>1$ does not admit any non-isomorphic surjective endomorphisms. Since $f:X\rightarrow X$ is int-amplified, $\deg f>1$ (cf.~\cite[Lemma 3,7]{meng2017building}). Hence, there are two cases.

\textbf{Case (a)}. The genus of $X$, $g(X)=1$, i.e.\,$X$ is an elliptic curve. In this case, Theorem \ref{thm1.1} (1) naturally holds, since $c=0$ by Hurwitz's Theorem. For Theorem \ref{thm1.1} (2), since $\textup{NS}(X)\cong\mathbb{Z}$ for the elliptic curve $X$ (cf.~\cite[pp.165 Corollary 2]{mumford2008abelian}), $f^*|_{\textup{NS}_{\mathbb{Q}}(X)}$ is diagonalizable over $\mathbb{Q}$. Further, Theorem \ref{thm1.1} (3) and (4) cannot happen in this case.
\setlength\parskip{8pt}

\textbf{Case (b)}. The genus of $X$, $g(X)=0$. Then $X\cong \mathbb{P}^1$. By Hurwitz's Theorem, 
$$K_X+\sum V_i=f^*(K_X+\sum V_i)+R_{f}',$$
with $R_f'$ effective.  Taking the degree of both sides, we have
$$-2+c=(\deg f)(-2+c)+\deg R_f'.$$
Therefore, $c\le 2$. In this case, Theorem \ref{thm1.1} (2),(3),(4) obviously hold since $\textup{Pic}(\mathbb{P}^1)\cong\mathbb{Z}$. For Theorem \ref{thm1.1} (1),  computing the discrepancy (cf.~\cite[Corollary 2.31]{kollar2008birational}), we see that $(X,\sum V_i)$ is log canonical. Moreover, $X\cong \mathbb{P}^1$ is rational, and then it is uniruled. Thus, we have completed the proof of Theorem \ref{thm1.1} for the case when $\dim X=1$.
\end{proof}
\setlength\parskip{8pt}
\subsection{The Case $\dim X\ge 2$}
Suppose Theorem \ref{thm1.1} holds for those $X'$ with $\dim X'\le n-1$. From now on,  we consider the case  when $\dim X=n\ge 2$. We assume that $V_j\,(1\le j\le c)$ are all the prime divisors in $\textup{TI}_f(X)$, which are contained in the ramification divisors of $f$ (cf.~Lemma \ref{prop3.3}).
According to the hypotheses in our theorem, we may assume $c\ge \rho(X)+n-2\ge 1$ (and hence $K_X$ is not pseudo-effective by Lemma \ref{psecod1}). \setlength\parskip{0pt}

During the proof of Theorem \ref{thm1.1}, for  (1) and (4), we only need one Fano contraction when running MMP while for (2) and (3), we need the end product $Y$ of MMP in Theorem \ref{thm1.10}. The detailed proof is as follows.

\subsubsection{\textbf{Minimal Model Program for $X$}} \label{subsubsectionmmp}

By Theorem \ref{thm1.10}, if $K_X$ is pseudo-effective, then $X$ is $Q$-abelian. From now on, we assume that $K_X$ is not pseudo-effective. Then the minimal model program of $X$ will end with a Fano contraction $X_l\rightarrow Y$ (cf.~[Ibid.]). For each $u_i:X_i\dashrightarrow X_{i+1}$, it is one of the following types: a divisorial contraction, a flip or a Fano contraction of a $K_{X_i}$-negative extremal ray. Furthermore, there exists a positive integer $t$, such that $g_0=f^t$ descends to an int-amplified endomorphism $g_i$ of $X_i$ for each $1\le i\le l+1$.

By the log ramification divisor formula for the pair $(X,\sum V_j)$ (cf.~\cite[Theorem 11.5]{iitaka1982algebraic}),
\begin{equation}\label{eq1}
K_X+\sum V_j=f^*(K_X+\sum V_j)+\Delta_f,
\end{equation}
with $\Delta_f$ effective, having no common components with $\bigcup\text{Supp} V_j$.
Note that each $V_j$ is $f^{-1}$-invariant and thus $g_0^{-1}$-invariant, i.e.\,$\textup{TI}_f(X)\subseteq \textup{TI}_{g_0}(X)$. 

For each $u_i$, the Picard number  will decrease by one if $u_i$ is a divisorial contraction or a Fano contraction while the Picard number will be the same if $u_i$ is a flip (cf.~\cite[Proposition 3.36, 3.37]{kollar2008birational}). This point provides the possibility for our induction.

We  focus on a specific case. When running MMP, once there appears a Fano contraction, one stops it immediately. In other words, we consider the composite morphisms:
\[
\xymatrix{
X=X_0\ar@{-->}[r]&X_1\ar@{-->}[r]&\cdots\ar@{-->}[r]&X_i\ar@{-->}[r]&\cdots\ar@{-->}[r]&X_r\ar@{-->}[r]&X_{r+1}=Y_0,
}
\]
where each $u_i: X_i\dashrightarrow X_{i+1}\,(0\le i\le r-1)$ is either a divisorial contraction or a flip and $u_r: X_r\rightarrow Y_0$ is a Fano contraction. Then each $u_i\,(0\le i\le r-1)$ is a birational map and the dimensions of $X_i~(0\le i\le r)$ are the same.

Let $V(1)_j\subseteq X_1$ be the strict transform of $V_j$ under the birational map  $u_0$ if $V_j$ is not exceptional over $X_1$. Then $V(1)_j$ is still a prime divisor. 
Since $g_0$ descends to an int-amplified endomorphism $g_1$ on $X_1$ by Theorem \ref{thm1.10}, $V(1)_j\in\textup{TI}_{g_1}(X_1)$. Using Lemma \ref{prop3.3}, we get $g_1^*(V(1)_j)=\alpha_j V(1)_j$ for some $\alpha_j>1$. 

In general, for each $0\le i\le r-1$, let $V(i+1)_j$ be the strict transform of $V(i)_j$ under the birational map $u_i$ if $V(i)_j$ is not exceptional over $X_{i+1}$. Then $V(i+1)_j\in\textup{TI}_{g_{i+1}}(X_{i+1})$.

 We use $s_i\,(0\le i\le r)$ to denote the cardinality of  $\textup{TI}_{g_i}(X_i)$. It is obvious that $s_0\ge c$. 
 Also, for each $i$, the number of such $V(i)_j$ cannot exceed $s_i$. By the log ramification divisor formula for the pair $(X_i,\sum_j V(i)_j)$ (cf.~\cite[Theorem 11.5]{iitaka1982algebraic}), we get
\begin{equation}\label{eq2}
K_{X_i}+\sum\limits_{j}V(i)_j=g_i^*(K_{X_i}+\sum\limits_{j}V(i)_j)+\Delta(i).
\end{equation}
Here, $\Delta(i)$ is an integral log ramification  divisor having no common components with $\sum V(i)_j$.  
\begin{lem}\label{invariant}
With the same symbols given above, the equality $s_1=s_0-(\rho(X_0)-\rho(X_1))$ holds (and thus, $s_i=s_0-(\rho(X_0)-\rho(X_i))$ for each $i\le r$). In particular, if $c\ge \rho(X_0)+\dim X_0-k$ for some fixed integer $k$, then $s_r\ge \rho(X_r)+\dim X_r-k$. 
\end{lem}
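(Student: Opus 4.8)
The plan is to reduce Lemma \ref{invariant} to a one-step statement and to handle the two possible types of $u_i$ ($0 \le i \le r-1$) separately, each of them being birational. I will use throughout that $g_0 = f^t$ descends to the int-amplified endomorphism $g_i$ on every $X_i$, with $u_i \circ g_i = g_{i+1} \circ u_i$ whenever $u_i$ is a morphism; that the $\mathbb{Q}$-factorial klt property is preserved along the program; that $s_0 \ge c$ (as each $V_j$ is also $g_0^{-1}$-invariant); and the elementary fact that a totally invariant prime divisor $D$, i.e.\,one with $g^{-1}(D) = D$, automatically satisfies $g(D) = D$ by surjectivity of $g$. Once the one-step identity $s_{i+1} = s_i - (\rho(X_i) - \rho(X_{i+1}))$ is proved for every $i < r$, summing it telescopes to $s_i = s_0 - (\rho(X_0) - \rho(X_i))$ for all $i \le r$.

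First I dispose of flips. If $u_i$ is a flip, then $\rho(X_i) = \rho(X_{i+1})$ and $u_i$ is an isomorphism in codimension one; since $g_i$ and $g_{i+1}$ agree on the common open locus (whose complement in each of $X_i,X_{i+1}$ has codimension $\ge 2$), taking strict transforms is a bijection $\textup{TI}_{g_i}(X_i) \leftrightarrow \textup{TI}_{g_{i+1}}(X_{i+1})$, so $s_{i+1} = s_i$. Now suppose $\pi := u_i : X_i \to X_{i+1}$ is a divisorial contraction with exceptional prime divisor $E$ and center $Z := \pi(E)$, so $\rho(X_{i+1}) = \rho(X_i) - 1$, $Z$ is irreducible with $\dim Z \le \dim X_i - 2$, and $\pi$ is an isomorphism over $X_{i+1} \setminus Z$. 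The key claim is $E \in \textup{TI}_{g_i}(X_i)$: as $g_i$ is finite, $g_i^{-1}(E)$ is pure of codimension one and $g_i(E)$ is an irreducible closed subset of dimension $\dim X_i - 1$, while $\pi(g_i(E)) = g_{i+1}(Z)$ has dimension $\le \dim X_i - 2$; since $\pi$ is an isomorphism away from $E$, the image $g_i(E)$ cannot be dense in a divisor other than $E$, forcing $g_i(E) = E$. Applying the same dimension count to any hypothetical component $E' \ne E$ of $g_i^{-1}(E)$ — its image $\pi(E')$ is a divisor, so $g_{i+1}(\pi(E')) = \pi(g_i(E'))$ has dimension $\dim X_i - 1$, yet it is contained in $Z$ — rules out such $E'$, so $g_i^{-1}(E) = E$.

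With $E$ totally invariant, I claim that strict-transform pushforward $D \mapsto \pi(D)$ and its inverse, the strict transform, give mutually inverse bijections between $\textup{TI}_{g_i}(X_i) \setminus \{E\}$ and $\textup{TI}_{g_{i+1}}(X_{i+1})$. Given $D \in \textup{TI}_{g_i}(X_i)$ with $D \ne E$, the divisorial part of $\pi^{-1}(\pi(D))$ is $D$ (together with $E$ precisely when $Z \subseteq \pi(D)$); pushing the identity $\pi^{-1}(g_{i+1}^{-1}(\pi(D))) = g_i^{-1}(\pi^{-1}(\pi(D)))$ through $g_i^{-1}(D) = D$ and $g_i^{-1}(E) = E$, then applying $\pi$, while using that $g_{i+1}^{-1}(\pi(D))$ is pure of codimension one (so the lower-dimensional debris $\pi^{-1}$ may introduce over $Z$ is discarded), yields $g_{i+1}^{-1}(\pi(D)) = \pi(D)$. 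Conversely, given $D' \in \textup{TI}_{g_{i+1}}(X_{i+1})$ with strict transform $\widetilde{D'}$: since $\dim Z < \dim \widetilde{D'}$ and $g_i(E) = E$, the irreducible divisor $g_i(\widetilde{D'})$ — which maps onto $g_{i+1}(D') = D'$ under $\pi$ — must be $\widetilde{D'}$ itself, and any component of $g_i^{-1}(\widetilde{D'})$ other than $\widetilde{D'}$ would, for the same reasons, be the strict transform of a prime divisor $D''$ with $g_{i+1}(D'') = D'$, forcing $D'' \subseteq g_{i+1}^{-1}(D') = D'$ and hence $D'' = D'$, a contradiction; thus $g_i^{-1}(\widetilde{D'}) = \widetilde{D'}$. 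This gives $s_{i+1} = s_i - 1 = s_i - (\rho(X_i) - \rho(X_{i+1}))$ and completes the one-step analysis. Telescoping then yields $s_i = s_0 - (\rho(X_0) - \rho(X_i))$ for all $i \le r$; and since each $u_i$ ($i < r$) is birational we have $\dim X_r = \dim X_0$, while $s_0 \ge c$, so if $c \ge \rho(X_0) + \dim X_0 - k$ then
\[
s_r = s_0 - \rho(X_0) + \rho(X_r) \ge c - \rho(X_0) + \rho(X_r) \ge \dim X_0 + \rho(X_r) - k = \dim X_r + \rho(X_r) - k .
\]
I expect the main obstacle to be exactly the bookkeeping in the divisorial case: proving that $E$ is genuinely totally invariant (not merely invariant up to lower-dimensional subsets) and controlling the spurious low-dimensional components that set-theoretic preimages under the non-finite morphism $\pi$ may create. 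The dimension bound $\dim Z \le \dim X_i - 2$ together with the dimension-preserving preimages of the finite maps $g_i$ and $g_{i+1}$ are precisely what render these issues harmless.
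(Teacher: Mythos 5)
Your proof is correct and follows essentially the same approach as the paper's: split into the flip and divisorial-contraction cases, observe that the exceptional divisor $E$ is totally invariant, and deduce the one-step drop $s_{i+1}=s_i-(\rho(X_i)-\rho(X_{i+1}))$, then telescope. The paper asserts $E\in\textup{TI}_{g_0}(X_0)$ from $g_0$-equivariance and takes the strict-transform bijection for granted, whereas you supply the dimension-count justifications for both points, but the underlying argument is the same.
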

\begin{proof}
 We use the induction to prove our claim. There are two cases for $u_0$:

Case (1): Suppose $u_0$ is a divisorial contraction. Then, since $u_0$ is $g_0=f^t$ equivariant, the exceptional divisor $E$ of $u_0$ lies in $\textup{TI}_{g_0}(X_0)$. Thus, $s_1= s_0-1$ and also $\rho(X_1)=\rho(X_0)-1$.  In this case, we get $s_1=s_0-(\rho(X_0)-\rho(X_1))$.

Case (2): Suppose $u_0$ is a flip. Then $u_0$ is an isomorphism in codimension one. Therefore, $u_0$ does not contract or create any new divisors. Then $s_0=s_1$, and $\rho(X_1)=\rho(X_0)$. In this case, $s_1=s_0-(\rho(X_0)-\rho(X_1))$ still holds.

In any case, we get the same equality for $X_1$. Repeating the above discussion, we complete the first part of our claim. For the second part, note that $s_0\ge c$ and  $\dim X_i=n$ for each $i\le r$. Hence, the inequality follows from the above equality for each $i\le r$.
\end{proof}

\begin{lem}\label{verykeylemma1}
	With the same symbols given above, suppose $\Delta(r)=0$ for $i=r$ in Equation (\ref{eq2}). Then the following statements hold.
	\begin{enumerate}
	\item[\textup{(1)}] $K_{X_r}+\sum_j V(r)_j\equiv 0$.
	\item[\textup{(2)}] If $\textup{Pic}^\circ(X_r)=0$ and every exceptional divisor of the composite map: $\delta: X_0\dashrightarrow X_r$ is contained in $\textup{TI}_f(X_0)$, where $\delta=u_{r-1}\circ\cdots\circ u_0$, then  $\Delta_f=0$ in Equation (\ref{eq1}) and hence $K_X+\sum V_j\equiv 0$. Moreover, $f$ is \'etale outside $(\bigcup V_j)\cup f^{-1}(\textup{Sing}\,X)$.	\end{enumerate}
\end{lem}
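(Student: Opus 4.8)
The plan is to get (1) directly from the int‑amplified hypothesis, and to get (2) by transporting the log‑ramification divisor $\Delta(r)$ back along the birational map $\delta$. For (1): the MMP steps preserve $\mathbb{Q}$-factoriality, so $X_r$ is $\mathbb{Q}$-factorial and $K_{X_r}+\sum_j V(r)_j$ is $\mathbb{Q}$-Cartier, defining a class in $\textup{N}^1(X_r)$. The hypothesis $\Delta(r)=0$ turns Equation~(\ref{eq2}) with $i=r$ into $g_r^*(K_{X_r}+\sum_j V(r)_j)=K_{X_r}+\sum_j V(r)_j$, so this class is fixed by $g_r^*|_{\textup{N}^1(X_r)}$. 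Since $g_r$ is int‑amplified, every eigenvalue of $g_r^*|_{\textup{N}^1(X_r)}$ has modulus $>1$ by Lemma~\ref{prop3.3}; in particular $1$ is not an eigenvalue, so $K_{X_r}+\sum_j V(r)_j\equiv 0$.

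For (2) the core is the identity $\Delta(i+1)=(u_i)_*\Delta(i)$, as $\mathbb{Q}$-divisors, for every $0\le i\le r-1$. If $u_i$ is a flip this is immediate, as $u_i$ is an isomorphism in codimension one and $K$, the $V(i)_j$, $\Delta(i)$ and the pullbacks by $g_i,g_{i+1}$ all match under the induced identification of Weil divisors. If $u_i$ is a divisorial contraction with exceptional divisor $E_i$, then $E_i$ is the strict transform of a $\delta$-exceptional prime divisor of $X_0$; by the hypothesis on $\delta$ this divisor is one of the $V_j$, so (as in the discussion preceding Lemma~\ref{invariant}) $E_i\in\textup{TI}_{g_i}(X_i)$ and $g_i^*E_i$ is a positive multiple of $E_i$, whence $(u_i)_*g_i^*E_i=0$ because $(u_i)_*E_i=0$. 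Now $(u_i)_*K_{X_i}=K_{X_{i+1}}$ and $(u_i)_*\sum_j V(i)_j=\sum_j V(i+1)_j$; writing $L=K_{X_{i+1}}+\sum_j V(i+1)_j$, the relation $u_i\circ g_i=g_{i+1}\circ u_i$ gives $u_i^*(g_{i+1}^*L)=g_i^*(u_i^*L)$, and applying $(u_i)_*$, using the projection formula $(u_i)_*u_i^*=\textup{id}$ on $\mathbb{Q}$-Cartier classes together with $(u_i)_*g_i^*E_i=0$, yields $g_{i+1}^*L=(u_i)_*\big(g_i^*(K_{X_i}+\sum_j V(i)_j)\big)$. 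Subtracting from $(u_i)_*(K_{X_i}+\sum_j V(i)_j)=L$ gives $\Delta(i+1)=(u_i)_*\Delta(i)$.

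Composing over $i=0,\dots,r-1$ gives $\Delta(r)=\delta_*\Delta(0)$, where $\Delta(0)$ is the log‑ramification divisor of $g_0=f^t$; iterating Equation~(\ref{eq1}) shows $\Delta(0)=\Delta_f+\sum_{i=1}^{t-1}(f^i)^*\Delta_f\ge\Delta_f\ge 0$. By the log‑ramification formula $\Delta(0)$ has no component in common with $\sum V_j$, and every $\delta$-exceptional prime divisor is among the $V_j$, so no component of $\Delta(0)$ is $\delta$-exceptional; hence $\delta_*$ carries the distinct components of $\Delta(0)$ to distinct nonzero prime divisors. Therefore $\Delta(r)=\delta_*\Delta(0)=0$ forces $\Delta(0)=0$, whence $\Delta_f=0$. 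Plugging $\Delta_f=0$ into Equation~(\ref{eq1}) gives $f^*(K_X+\sum V_j)=K_X+\sum V_j$, and the eigenvalue argument of part (1) (via Lemma~\ref{prop3.3}) gives $K_X+\sum V_j\equiv 0$. Finally $\Delta_f=0$ forces the ramification divisor of $f$ to be supported on $\bigcup V_j$, so by the purity of branch loci $f$ is \'etale outside $(\bigcup V_j)\cup f^{-1}(\textup{Sing}\,X)$, exactly as at the end of the proof of Proposition~\ref{promm100}.

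The main obstacle is the per‑step identity $\Delta(i+1)=(u_i)_*\Delta(i)$: one must verify that the exceptional divisor of a divisorial contraction really is one of the $V_j$ — this is precisely where the assumption that the $\delta$-exceptional divisors lie in $\textup{TI}_f(X_0)$ is used — and that the base‑change relation $u_i^*g_{i+1}^*=g_i^*u_i^*$ and the projection formula hold at the level of $\mathbb{Q}$-divisors, and, for a flip, at the level of Weil divisors across the isomorphism in codimension one. The hypothesis $\textup{Pic}^\circ(X_r)=0$ is not actually invoked in this line of argument; it is recorded because in the application of the lemma $X_r$ will be rationally connected, so $\textup{Pic}^\circ(X_r)=0$ by Proposition~\ref{prou1}, and it is needed there to upgrade the conclusions ``$\equiv 0$'' to ``$\sim_{\mathbb{Q}}0$''.
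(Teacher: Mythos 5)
Your part (1) is the same one-line application of Lemma~\ref{prop3.3} as in the paper. Your part (2), however, takes a genuinely different route. The paper writes down the $\delta$-pullback relation $K_X+\sum V_j=\delta^*(K_{X_r}+\sum V(r)_j)+E_2-E_1$ with $E_1,E_2$ effective $\delta$-exceptional, combines it with Equations~(\ref{eq1}) and (\ref{eq010}) to get $\Delta_f+\sum(a_{2l}-1)E_{2l}\sim_{\mathbb{Q}}\sum(a_{1l}-1)E_{1l}$ with both sides effective and of disjoint support, and then kills everything by an Iitaka-dimension argument using the graph $\Gamma$ of $\delta$ and the fact that $\delta$ extracts no divisors ($\kappa(X,F_1)=0$ versus $\kappa(X,F_1)\ge 1$). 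You instead push the log-ramification divisor forward step-by-step through the MMP, establishing $\Delta(i+1)=(u_i)_*\Delta(i)$ (via $u_i^*g_{i+1}^*=g_i^*u_i^*$, the projection formula, and the observation that $E_i$ is one of the $V(i)_j$ so $\Delta(i)$ has no $E_i$-component), so that $\Delta(r)=\delta_*\Delta(0)$; since the hypothesis on $\delta$-exceptional divisors forces every component of $\Delta(0)$ to survive under $\delta_*$, $\Delta(r)=0$ implies $\Delta(0)=0$ and hence $\Delta_f=0$. Both proofs use the crucial hypothesis that $\delta$-exceptional divisors lie in $\textup{TI}_f(X_0)$: the paper needs it so that $a_{kl}>1$ and the $E_k$ can be controlled, while you need it to identify $E_i$ among the $V(i)_j$ and to exclude $\delta$-exceptional components from $\Delta(0)$. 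Your route is cleaner in one respect: it bypasses (\ref{eq010}) entirely, so — as you correctly note — the hypothesis $\textup{Pic}^\circ(X_r)=0$ is not used in your argument, whereas the paper genuinely invokes it to upgrade part (1) from ``$\equiv 0$'' to ``$\sim_{\mathbb{Q}}0$'' before applying $\delta^*$. That hypothesis is still needed downstream in the proof of Theorem~\ref{thm1.1}(4) to turn the conclusion $K_X+\sum V_j\equiv 0$ into a $\mathbb{Q}$-linear equivalence.
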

We will apply the above result to prove Theorem \ref{thm1.1} (4). Before proving Lemma \ref{verykeylemma1}, we refer readers to the following remarks on the assumption of this lemma.
\begin{remark}\label{remark3.6}
\textup{A prime divisor $E$ on $X_0$ is $\delta$-exceptional if it is exceptional over some $X_i$. Suppose $E\subseteq X_{i-1}$ is exceptional over $X_i$. Then for the divisorial contraction $u_{i-1}:X_{i-1}\rightarrow X_i$, the image $u_{i-1}(E)$ is a codimension $\ge 2$ closed subset of $X_i$. So it cannot create new divisors when mapped into $X_{i+1}$ no matter $u_i$ is  a divisorial contraction or a flip, since divisorial contractions are birational and flips are isomorphic in codimension one.  Besides, we define the pullback of $\delta$ (cf.~Equation (\ref{eq3})) as the composite of $u_0^*\circ u_1^*\circ\cdots\circ u_{r-1}^*$. Moreover, $\textup{NS}(X_0)$ is generated by the pullback of generators of $\textup{NS}(X_r)$ and all the $\delta$-exceptional divisors (cf.~\cite[Proposition 3.36 and 3.37]{kollar2008birational}).}

\textup{The composite $\delta: X=X_0\dashrightarrow X_r$  is always $g_0$-equivariant by the choice of our $g_0$, i.e.\,$g_0$ descends to an int-amplified endomorphism $g_r$ on $X_r$. Hence, every exceptional divisor $E$ of $\delta$ is contained in $\textup{TI}_{g_0}(X_0)$. However, this is not enough to conclude our result, and the assumption of (2) in Lemma \ref{verykeylemma1}  is necessary. We will see later that the assumption of (2) holds when the cardinality $c$ of $\textup{TI}_f(X_0)$ achieves the upper bound $\dim X_0+\rho(X_0)$. }
\end{remark}

\begin{proof} We begin to prove Lemma \ref{verykeylemma1}.
Indeed, Lemma \ref{verykeylemma1} (1)  follows immediately from Lemma \ref{prop3.3}. Now, we  prove  (2). 
 Since $\textup{Pic}^\circ(X_r)=0$, (1) implies that 
 \begin{equation}\label{eq010}
 	K_{X_r}+\sum V(r)_j\sim_\mathbb{Q} 0.
 \end{equation}
 Write down the well-defined $\delta$-pullback formula as follows:
\begin{equation}\label{eq3}
K_X+\sum V_j=\delta^*(K_{X_r}+\sum V(r)_j)+E_2-E_1.
\end{equation}
Here, $E_k=\sum_l E_{kl}~(k=1,2)$ are effective $\delta$-exceptional divisors and each $E_{kl}$ lies in $\textup{TI}_{f}(X_0)$ by assumption. Hence, we may assume $f^*E_k=\sum_l a_{kl}E_{kl}$ with $a_{kl}>1$. Recall the log ramification divisor formula for $(X,\sum V_j)$ (cf.~Equation (\ref{eq1})):
$$K_X+\sum V_j=f^*(K_X+\sum V_j)+\Delta_f$$
with $\Delta_f$ effective, having no common components with $\bigcup V_i$. In summary, Supp\,$E_1$ and Supp\,$(E_2+\Delta_f)$  have no common components and Supp\,$(E_1+E_2)\subseteq\bigcup V_j$, since $V_j\,(1\le j\le c)$ are all the prime divisors in $\textup{TI}_f(X)$. By Equation (\ref{eq1}), (\ref{eq010}) and (\ref{eq3}),\begin{equation}
E_2-E_1-\Delta_f\sim_{\mathbb{Q}}f^*(E_2-E_1)=\sum a_{2l}E_{2l}-\sum a_{1l}E_{1l}.
\end{equation}
Therefore, we get the following equation with both sides effective:
\begin{equation}\label{eq555}
\Delta_f+\sum(a_{2l}-1)E_{2l}\sim_{\mathbb{Q}}\sum(a_{1l}-1)E_{1l}.
\end{equation}

We claim that $E_1=E_2=\Delta_f=0$. Suppose the contrary that the claim does not hold. Let  $F_k:=\sum_l(a_{kl}-1)E_{kl}\ge 0$ for $k=1,2$. On the one hand, since Supp\,$F_1$ and Supp\,$(\Delta_f+F_2)$ have no common components, Equation (\ref{eq555}) gives us that $\kappa(X,F_1)\ge 1$. On the other hand, we take the graph of the birational map $\delta: \xymatrix{
X\ar@{-->}[r]&X_r}$:
$$
\xymatrix{
&\Gamma\ar[dl]_{p_1}\ar[dr]^{p_2}&\\
X\ar@{--}[r]&\ar@{-->}[r]&X_r
}
$$
Here, the projections $p_1$  and $p_2$ are birational morphisms. As we mentioned above, $F_k$ is $\delta$-exceptional for $k=1,2$; $\delta$ is a composite of flips or birational morphisms and thus $\delta$ does not extract divisors. Therefore, $p_{2*}p_1^*F_1=0$. As a result, we have the following:
\begin{align*}
\textup{H}^0(X,\mathcal{O}_X(mF_1))=\textup{H}^0(\Gamma, p_1^*\mathcal{O}_X(mF_1))=\textup{H}^0(X_r, p_{2*} p_1^*\mathcal{O}_X(mF_1))=\textup{H}^0(X_r,\mathcal{O}_{X_r})=k
\end{align*}
for any $m>0$. The first equality is due to projection formula. By definition, the Iitaka dimension $\kappa(X,F_1)=0$, a contradiction. Therefore, our claim holds.

Since $\Delta_f=0$ in Equation (\ref{eq1}), applying Lemma \ref{prop3.3} to Equation (\ref{eq1}),  $K_X+\sum V_j\equiv 0$. Moreover, $\Delta_f=0$ implies that the ramification divisor of $f$ consists only of  these $V_j$'s. By the purity of branch loci, $f$ is \'etale outside $(\bigcup V_j)\cup f^{-1}(\textup{Sing}\,X)$, which completes the proof of Lemma \ref{verykeylemma1}.
\end{proof}

\subsubsection{\textbf{The Case of $\dim Y_0=0$}}\label{partdim0}
~~~~In this case, $\rho(X_r)=1$ and $-K_{X_r}$ is ample. Then $X_r$ is log $Q$-Fano and thus rationally connected (cf.~\cite{zhang2006rational}). Since $X$ and $X_r$ are birational, $X$ is also rationally connected (and hence uniruled) by Proposition \ref{prorc}. Besides, $X$ is simply connected with respect to complex topology (cf.~Theorem \ref{scc} and Lemma \ref{lsco}) and $X_r$ also has only klt singularities (cf.~\cite[Corollary 3.42, 3.43]{kollar2008birational}). Then, by Proposition \ref{prou1}, $\textup{Pic}^\circ(X_r)=\textup{Pic}^\circ(X_0)=0$, $\textup{Pic}(X_0)\cong \textup{NS}(X_0)$ and $\textup{Pic}(X_r)\cong \textup{NS}(X_r)$. 

We fix an ample divisor $H_{X_r}$ on $X_r$. Since $\rho(X_r)=1$, we get $g_r^*H_{X_r}\equiv a H_{X_r}$ and also $a>1$ (cf.~Lemma \ref{prop3.3}). Hence $g_r$ is polarized by \cite[Lemma 2.3]{meng2018building}. Further, since $\textup{Pic}^\circ(X_r)=0$, with $H_{X_r}$ replaced by its power, we may assume $g_r^*H_{X_r}\sim aH_{X_r}$. By Proposition \ref{prorc} and Remark \ref{remark3.6}, $\textup{Pic}(X_0)\otimes_{\mathbb{Z}}\mathbb{Q}\cong \textup{NS}_{\mathbb{Q}}(X_0)$ as a $\mathbb{Q}$-vector space, is spanned by $\delta$-exceptional divisors and  $\delta^*H_{X_r}$  over $\mathbb{Q}$. 
On the one hand, $g_0^*(\delta^*(H_{X_r}))=\delta^*(g_r^*(H_{X_r}))\sim a\delta^*(H_{X_r})$ for some $a>1$.  On the other hand, for each $\delta$-exceptional prime divisor $E$, it is contained in $\textup{TI}_{g_0}(X_0)$ by the choice of $g_0$ and $g_0^*E=\beta E$ for some $\beta>1$. Therefore, $g_0^*|_{\textup{Pic}X}$ is diagnolizable over $\mathbb{Q}$. 
Since $g_0$ is a positive power of $f$, this completes the proof of Theorem \ref{thm1.1} (2) and (3) for the case when $\dim Y_0=0$.
\setlength\parskip{8pt}

Suppose $c\ge \rho(X)+n$ from now on. Then $s_r\ge n+\rho(X_r)=n+1$ (cf.~Lemma  \ref{invariant}). \setlength\parskip{0pt}
Since $g_r$ is polarized and each $V(r)_j$ is ample, by Proposition \ref{promm100}, we get
\begin{equation}\label{111}
s_r=n+1,~~~K_{X_r}+\sum\limits_{j=1}^{n+1}V(r)_j\equiv 0.
\end{equation}
Therefore,  by \cite[CH.~\Rmnum{13}, Theorem 4.6]{berthelot2006theorie} (and $\textup{Pic}^\circ(X_r)=0$), we also get Equation (\ref{eq010}).
Since  $s_r=n+1=n+\rho(X_r)$, by Lemma \ref{invariant},  $s_i=n+\rho(X_i)$ for each $0\le i\le r$. The equality for the case when $i=0$ in turn forces $s_0=c$. Thus, $\textup{TI}_f(X_0)=\textup{TI}_{g_0}(X_0)$, i.e.\,all of these $g_0^{-1}$-invariant prime divisors consist only of $V_j$'s.  This proves Theorem \ref{thm1.1} (1) for the case when $\dim Y_0=0$. 

\setlength\parskip{8pt}

Now we prove Theorem \ref{thm1.1} (4) for the case when $\dim Y_0=0$. Since $s_i=n+\rho(X_i)$ for each $i$, the exceptional divisor of $u_i$ is contained in $\sum_jV(i)_j$ if $u_i$ is a divisorial contraction. Also, since $X_r$ and also $X$ are rationally connected with at worst klt singularities, by Proposition \ref{prou1}, $\textup{Pic}^\circ(X_r)=\textup{Pic}^\circ(X)=0$.\setlength\parskip{0pt} Hence, in this case, Theorem \ref{thm1.1} (4) follows from Lemma \ref{verykeylemma1}.
\setlength\parskip{8pt}

\subsubsection{\textbf{The Proof of Theorem \ref{thm1.1} (1) for the Case $\dim Y_0\ge 1$}}\label{part2}\setlength\parskip{0pt}
We still assume that $\dim Y_0\ge 1$ and $c\ge \rho(X)+\dim X$. 
Actually, the second part of (1) follows immediately. Indeed, if $c\ge 1$, then the ramification divisor $R_f>0$, which means $f$ is not \'etale in codimension one. By Lemma \ref{psecod1}, $K_X$ is not pseudo-effective and hence $X$ is uniruled by Remark \ref{remncu}. Moreover, by Lemma \ref{lem1}, the pair $(X,\sum V_j)$ is log canonical.

Recall that Lemma \ref{invariant} gives us the same inequality for each $X_i~(i\le r)$: $s_i\ge n+\rho(X_i)$ if we assume $s_0\ge n+\rho(X_0)$. We want to ask whether the inequality still holds for $Y_0$. If the inequality holds, then we can use the induction on $\dim Y_0$, which proves Theorem \ref{thm1.1} (1). Parallel to Lemma \ref{invariant}, we introduce the following lemma, from which, the first part of Theorem \ref{thm1.1} follows immediately. 

To emphasize the notation for the Fano contraction $u_r:X_r\rightarrow Y_0$,  let $s_{Y_0}$ be cardinality of $\textup{TI}_{g_{Y_0}}(Y_0)$ and $g_{Y_0}:=g_{r+1}$ the int-amplified endomorphism of $Y_0$, which $g_0$ descends to. 
\begin{lem}\label{invariant1}
With the same symbols given above, for the Fano contraction $u_r:X_r\rightarrow Y_0$, the inequality $s_r\le s_{Y_0}+\dim X_r-\dim Y_0+1$ holds. Further, if $s_r\ge n+\rho(X_r)-k$ for some fixed integer $k$, then $s_{Y_0}\ge \dim Y_0+\rho(Y_0)-k$. In particular, if $k=0$, then $s_{Y_0}=\dim Y_0+\rho(Y_0)$ and also $s_r=n+\rho(X_r)$. 	
\end{lem}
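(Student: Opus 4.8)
The plan is to analyze the Fano contraction $u_r\colon X_r\to Y_0$ directly, using the structure of a Mori fibre space together with the fact that $g_0$ descends equivariantly. First I would recall that $u_r$ is a $K_{X_r}$-negative Fano contraction with $\rho(X_r)-\rho(Y_0)=1$ (cf.~\cite[Proposition 3.36, 3.37]{kollar2008birational}), and that by Theorem \ref{thm1.10} the endomorphism $u_r$ is $g_0$-equivariant, so $g_r$ descends to an int-amplified endomorphism $g_{Y_0}$ of $Y_0$. The key dichotomy: a prime divisor $W\in\textup{TI}_{g_r}(X_r)$ either dominates $Y_0$ (i.e.\ $u_r(W)=Y_0$, a ``horizontal'' divisor) or it is contracted to a proper closed subset (i.e.\ $u_r(W)\subsetneq Y_0$, a ``vertical'' divisor). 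Since $u_r$ is equi-dimensional with every fibre irreducible (Theorem \ref{thm1.10}(2)), a vertical $g_r^{-1}$-invariant prime divisor must be the full preimage $u_r^{-1}(Z)$ of a prime divisor $Z$ on $Y_0$, and one checks $Z\in\textup{TI}_{g_{Y_0}}(Y_0)$; conversely the preimage of any element of $\textup{TI}_{g_{Y_0}}(Y_0)$ is such a vertical divisor. This gives a bijection between the vertical divisors in $\textup{TI}_{g_r}(X_r)$ and $\textup{TI}_{g_{Y_0}}(Y_0)$, so there are exactly $s_{Y_0}$ of them.

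Next I would bound the number of horizontal divisors in $\textup{TI}_{g_r}(X_r)$ by $\dim X_r-\dim Y_0+1$. The idea is to restrict to a general fibre $F$ of $u_r$: $F$ is a $\mathbb{Q}$-factorial klt Fano variety of dimension $d:=\dim X_r-\dim Y_0$ with $\rho(F)=1$, and a suitable power of $g_r$ restricts to a polarized (int-amplified with Picard number $1$) endomorphism $g_F\colon F\to F$ with $\deg g_F>1$. Each horizontal $W\in\textup{TI}_{g_r}(X_r)$ restricts to a nonzero effective $g_F^{-1}$-invariant divisor $W|_F$ on $F$, and distinct horizontal divisors give divisors on $F$ with no common component (after passing to a power, using Corollary \ref{remcariter}-type reasoning on the ample restrictions). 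Applying Proposition \ref{promm100} to $F$ (whose $-K_F$ is ample, and whose relevant hypotheses hold since the $W|_F$ are ample $\mathbb{Q}$-Cartier and $g_F$-invariant) bounds the number of such restricted divisors by $d+1$, hence the number of horizontal divisors in $\textup{TI}_{g_r}(X_r)$ is at most $d+1=\dim X_r-\dim Y_0+1$. Combining, $s_r\le s_{Y_0}+(\dim X_r-\dim Y_0+1)$, which is the first assertion.

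For the second assertion, suppose $s_r\ge n+\rho(X_r)-k$. Since $\dim X_r=n$ and $\rho(X_r)=\rho(Y_0)+1$, the inequality just proved gives
\[
n+\rho(X_r)-k\le s_r\le s_{Y_0}+\dim X_r-\dim Y_0+1=s_{Y_0}+n-\dim Y_0+1,
\]
so $s_{Y_0}\ge \rho(X_r)-k+\dim Y_0-1=\rho(Y_0)+\dim Y_0-k$. Finally, if $k=0$, then $s_{Y_0}\ge \dim Y_0+\rho(Y_0)$; but the general bound $s_{Y_0}\le \dim Y_0+\rho(Y_0)$ (which is Theorem \ref{thm1.1}(1) applied to the lower-dimensional variety $Y_0$, available by the induction hypothesis, noting $Y_0$ is $\mathbb{Q}$-factorial klt by \cite[Corollary 3.42, 3.43]{kollar2008birational}) forces equality $s_{Y_0}=\dim Y_0+\rho(Y_0)$, and then tracing back through the chain of inequalities forces $s_r=n+\rho(X_r)$ as well.

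The main obstacle I anticipate is the horizontal case: namely, showing that a general fibre $F$ genuinely carries an induced polarized endomorphism of degree $>1$ whose invariant divisors come from restrictions of the horizontal $W$'s, and that these restrictions are ample with no shared components, so that Proposition \ref{promm100} applies cleanly. One must be careful that restriction of an invariant prime divisor to a general fibre stays a prime (or at least reduced) divisor, possibly after replacing $g_r$ by a further power, and that the equi-dimensionality from Theorem \ref{thm1.10}(2) is exactly what rules out pathological vertical divisors that are not full fibre-preimages. Handling these finiteness/genericity issues, rather than the numerical bookkeeping, is where the real work lies.
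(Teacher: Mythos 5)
Your proposal follows essentially the same route as the paper: split $\textup{TI}_{g_r}(X_r)$ into vertical and horizontal divisors, map the vertical ones to $\textup{TI}_{g_{Y_0}}(Y_0)$, restrict the horizontal ones to a general fibre $W_0$ of $u_r$, show the restricted endomorphism is polarized, invoke Proposition~\ref{promm100} to bound the horizontal count by $\dim W_0+1$, and finish with the inductive bound $s_{Y_0}\le \dim Y_0+\rho(Y_0)$. Two local justifications in your write-up are inaccurate, though neither changes the strategy. First, you cite Theorem~\ref{thm1.10}(2) for equi-dimensionality and irreducibility of the fibres of $u_r\colon X_r\to Y_0$; that statement concerns the maps $X_i\to Y$ to the \emph{final} $Q$-abelian base $Y$, not to the intermediate target $Y_0$. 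The paper instead uses $\rho(X_r/Y_0)=1$ together with the Cone Theorem: a vertical $V(r)_j$ has $(V(r)_j\cdot C)=0$ for all contracted curves $C$, so a multiple $k_j'V(r)_j$ equals $u_r^*G_j'$ for a Cartier divisor $G_j'$ on $Y_0$, whose support is the required $g_{Y_0}^{-1}$-invariant prime divisor; this gives the injection (a full bijection is neither needed nor obvious). Second, your assertion $\rho(F)=1$ for a general fibre is not true in general; the paper only uses $\rho(X_r/Y_0)=1$ to conclude $g_r^*H_r\equiv kH_r\pmod{u_r^*\textup{N}^1(Y_0)}$ with $k>1$, and then restricts $H_r$ to $W_0$ to see $g_{W_0}$ is polarized by an ample class (Claim~\ref{claim000}), with no claim on $\rho(W_0)$. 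With these repairs your argument matches the paper's proof.
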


To prove Lemma \ref{invariant1}, we need to do some preparations.
By Lemma \ref{densepoint}, the periodic points of $g_{Y_0}$ are dense. Let $y_0\in Y_0$ be a general $g_{Y_0}$-periodic point. Replacing $g_{Y_0}$ (and $g_i$) by its power, we may assume $g_{Y_0}(y_0)=y_0$. Let $W_0:=u_r^{-1}(y_0)\subseteq X_r$. Then $W_0$, as a general fibre of $u_r$, is a Fano variety of dimension $n-\dim Y_0\ge 1$, since the canonical divisor of a general fibre of a Fano contraction is anti-ample.  Restricting $g_r$ to $W_0$, we get a surjective endomorphism of $W_0$ commuting with $u_r$ and $g_{Y_0}$:
$$g_{W_0}:=g_r|_{W_0}:W_0\rightarrow W_0.$$

\begin{claim}\label{claim000}
With the same symbols given above, $g_{W_0}$ is polarized.
\end{claim}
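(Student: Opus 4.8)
The plan is to exhibit an ample Cartier divisor on $W_0$ that is pulled back to a positive multiple of itself by $g_{W_0}$. First I would recall that $g_r: X_r \to X_r$ is int-amplified, so by Lemma \ref{prop3.3} all eigenvalues of $g_r^*|_{\textup{N}^1(X_r)}$ have modulus greater than $1$; in particular, since $u_r: X_r \to Y_0$ is a Fano contraction of relative Picard number one, the relative Picard group $\textup{N}^1(X_r/Y_0)$ is one-dimensional and $g_r^*$ preserves it (the extremal ray contracted by $u_r$ is $g_r$-invariant after replacing by a power, which we have already done when passing to $g_0$). Hence there is an integer $a > 1$ and a $g_r^*$-eigenvector in $\textup{N}^1(X_r/Y_0)$; choosing a Cartier divisor $A$ on $X_r$ which is $u_r$-ample and represents (a multiple of) this eigenclass, we get $g_r^* A \equiv a A$ modulo pullbacks from $Y_0$, i.e.\ $g_r^* A - aA = u_r^* B$ for some $\mathbb{Q}$-Cartier divisor $B$ on $Y_0$.

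Next I would restrict to the fibre $W_0 = u_r^{-1}(y_0)$. Since $g_{W_0} = g_r|_{W_0}$ and $u_r \circ g_r = g_{Y_0} \circ u_r$ with $g_{Y_0}(y_0) = y_0$, the endomorphism $g_r$ maps $W_0$ to itself, and pulling back the relation $g_r^* A - aA = u_r^* B$ along the inclusion $\iota: W_0 \hookrightarrow X_r$ kills the term $u_r^* B$ (it becomes $B$ restricted to the point $y_0$, hence zero). Therefore $g_{W_0}^*(\iota^* A) = a\, \iota^* A$ in $\textup{N}^1(W_0)$, or after passing to linear equivalence on the fibre, $g_{W_0}^*(A|_{W_0}) \sim a (A|_{W_0})$. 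Since $A$ is $u_r$-ample, its restriction $A|_{W_0}$ is ample on $W_0$, and $a > 1$; by \cite[Lemma 2.3]{meng2018building} (as already invoked in Subsection \ref{partdim0}) an endomorphism admitting an ample divisor $H$ with $g^* H \equiv a H$, $a>1$, is polarized. This shows $g_{W_0}$ is polarized.

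The main obstacle I anticipate is the bookkeeping needed to make the eigenvector choice in $\textup{N}^1(X_r/Y_0)$ genuinely \emph{Cartier} and $u_r$-ample simultaneously, and to ensure the $g_r$-invariance of the relevant class is literal (not just up to a further power) — but this is exactly what is guaranteed by having descended $g_0 = f^t$ all the way through the MMP so that $g_r$ itself is int-amplified and the extremal contraction $u_r$ is $g_r$-equivariant. A secondary subtlety is that $W_0$ may be singular (it is klt as a general fibre of the Fano contraction), so the pullback $\iota^* A$ should be understood via the usual restriction of $\mathbb{Q}$-Cartier divisors; since $A$ is Cartier this causes no trouble, and replacing $g_{Y_0}$ and the $g_i$ by a common power to arrange $g_{Y_0}(y_0) = y_0$ is harmless because a power of a polarized endomorphism is polarized. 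With these points in place the argument is complete.
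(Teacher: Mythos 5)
Your argument is essentially the same as the paper's: both use that $\rho(X_r/Y_0)=1$ to get $g_r^*H \equiv kH \pmod{u_r^*\textup{N}^1(Y_0)}$ for some $k>1$ (deduced from the eigenvalues of the int-amplified $g_r^*$ on the one-dimensional quotient $\textup{N}^1(X_r)/u_r^*\textup{N}^1(Y_0)$), then restrict to the general fibre $W_0$, where pullbacks from $Y_0$ die, to obtain $g_{W_0}^*(H|_{W_0}) \equiv k\, H|_{W_0}$ with $H|_{W_0}$ ample, concluding by the cited polarization criterion. The only cosmetic difference is that the paper restricts a globally ample divisor $H_r$ on $X_r$ (avoiding the need to select a $u_r$-ample eigenrepresentative), and it does not assert integrality of $k$, which is not needed and which you assert without justification.
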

\begin{proof}
Since $u_r: X_r\rightarrow Y_0$ is a Fano contraction, the relative Picard number $\rho(X_r/Y_0)=1$. Therefore, pulling back a fixed ample divisor $H_r$ on $X_r$, we get 
\begin{equation}\label{mmwxn}
g_r^* H_r\equiv  kH_r \mod(u_r^*\,\textup{N}^1(Y_0)).
\end{equation}
Since $g_r$ is int-amplified, each eigenvalue of $g_r^*|_{\textup{N}^1(X_r)}$ is of modulus greater than $1$ (cf.~Lemma \ref{prop3.3}). Besides, the operation $g_r^*|_{\textup{N}^1(X_r)}$ induces an operation on the space $\textup{N}^1(X_r)/u_r^*\,\textup{N}^1(Y_0)$, with all the eigenvalues of modulus greater than $1$. Thus, $k>1$. 

Now, since $u_r^*D|_{W_0}=0$ for any Cartier divisor $D$ on $Y_0$, restricting Equation (\ref{mmwxn}) to a general fibre $W_0$, we get 
$g_{W_0}^* (H_r|_{W_0})\equiv k H_r|_{W_0},$
with $H_r|_{W_0}$ ample on $W_0$. This together with $k>1$ proves that $g_{W_0}$ is polarized (cf.~\cite[Lemma 2.3]{nakayama2010polarized}).
\end{proof}\setlength\parskip{0pt}

Now, we begin to prove Lemma \ref{invariant1}.
\begin{proof}
	We divide these $V(r)_i(1\le i\le s_r)$ into two groups.
	
\textbf{Case (1)}: Suppose $u_r|_{V(r)_j}:V(r)_j\rightarrow Y_0~(j\le s_r(1))$ is not surjective. Since $u_r$ is projective and thus closed, the image $u_r(V(r)_j)$ is a codimension $\ge 1$ closed subset. Take a general point $y_0\in Y_0$ such that $y_0$ does not lie in the image of any $V(r)_j$ for $j\le s_r(1)$. Then one may easily get $V(r)_j|_{W_0}=0$. Since $\textup{Pic}(X_r/Y_0)=1$, any two contracted curves are proportional (under the numerical equivalence). Hence, for any curve $C$ lying in a fibre of $X_r\rightarrow Y_0$,  $(V(r)_j\cdot C)=0$. Since $V(r)_j$ is $\mathbb{Q}$-Cartier, we may take a suitable $k_j'\in\mathbb{N}$ such that $k_j' V(r)_j$ is Cartier.

By Cone Theorem (cf.~\cite[Lemma 3-2-5]{kawamata1987introduction} or \cite[Theorem 3.7 (4)]{kollar2008birational}), $k_j'V(r)_j=u_r^*G_j'$ for some effective Cartier divisor $G_j'$ on $Y_0$.
\setlength\parskip{0pt}Then the image $G_j:=u_r(V(r)_j)$, which is the support of $G_j'$, is a prime divisor on $Y_0$. Since $Y_0$ is normal, on the smooth locus of $G_j$, the pullback of $G_j$ is an integral divisor. Taking the closure, we get $u_r^*G_j=k_j V(r)_j$ for some $k_j\in\mathbb{N}$. 
Moreover, by the commutative diagram, $g_{Y_0}^{-1}(G_j)=G_j$ set-theoretically for each $j\le s_r(1)$. Applying the inductive hypothesis on $\dim Y_0$, the following holds:
\begin{equation}\label{mmmmwan1}
s_r(1)\le s_{Y_0}\le\dim Y_0+\rho(Y_0).
\end{equation}
Here,  the case when $s_r(1)=0$ is allowed during our discussion.
\setlength\parskip{8pt}

\textbf{Case (2)}: Suppose $u_r|_{V(r)_j}: V(r)_j\rightarrow Y_0~(s_r(1)<j\le s_r)$ is surjective. Note that the general fibre $W_0$ is not contained in any $V(r)_j$ for $s_r(1)<j\le s_r$. Let $s_r(2):=s_r-s_r(1)$ and fix an ample divisor $H_{X_r}$ on $X_r$, which does not lie in $u_r^*\,\textup{N}^1(Y_0)$. Since $\rho(X_r/Y_0)=1$, we have the following numerical property:\setlength\parskip{0pt}
$$V(r)_j\equiv t_j H_{X_r} \mod (u_r^*\,\textup{N}^1(Y_0))$$
for some $t_j>0$, and thus $V(r)_j|_{W_0}$ is ample on $W_0$.  Moreover, $V(r)_j|_{W_0}$ is $g_{W_0}^{-1}$-invariant on $W_0$ for each $s_r(1)<j\le s_r$. Note that we also allow the case $s_r(2)=0$.

By Claim \ref{claim000}, $g_{W_0}$ is polarized. If $\dim W_0\ge 2$, then by Proposition \ref{promm100}, we get  $s_r(2)\le \dim W_0+1$; if $\dim W_0=1$, then by Lemma \ref{lemdimx=1}, we get $s_r(2)\le 2=\dim W_0+1$. In any case, we get the following inequality:
\begin{equation}\label{eq17zzz}
	s_r(2)\le \dim W_0+1.
\end{equation}
\setlength\parskip{0pt}
Combining the inequality (\ref{eq17zzz}) with
 the first inequality of  (\ref{mmmmwan1}), the following holds:\setlength\parskip{0pt}
 \begin{align}\label{eq17777}
s_r=s_r(1)+s_r(2)\le s_{Y_0}+s_r(2)\le s_{Y_0}+\dim W_0+1=s_{Y_0}+\dim X_r-\dim Y_0+1.
 \end{align}
This completes the first part of our lemma. 

If $s_r\ge \dim X_r+\rho(X_r)-k$ for some integer $k$, then by Equation (\ref{eq17777}) and the fact that $\rho(Y_0)=\rho(X_r)-1$, we have  $s_{Y_0}\ge \dim Y_0+\rho(Y_0)-k$.
 In particular, if $k=0$, then combining   (\ref{eq17zzz}) with the second inequality of (\ref{mmmmwan1}), we see that,
 $$n+\rho(X_r)\le s_r=s_r(1)+s_r(2)\le \dim Y_0+\rho(Y_0)+\dim W_0+1=n+\rho(X_r).$$
Therefore, all the inequalities are equalities, and thus for each $i$, $s_i=n+\rho(X_i)$. This proves Lemma \ref{invariant1} and also Theorem \ref{thm1.1} (1) for the case when $\dim Y_0\ge 1$ since $c\le s_0$.
\end{proof}

At the end of this part, the inductive hypothesis on $Y_0$ implies the following:
\begin{equation}\label{eq111111}
	s_r(1)=\dim Y_0+\rho(Y_0),~~~K_{Y_0}+\sum\limits_{i=1}^{s_r(1)}G_i\sim_{\mathbb{Q}}0.
\end{equation}


\subsubsection{\textbf{The Proof of Theorem \ref{thm1.1} (2) and (3) for the Case When $\dim Y_0\ge 1$}}
In this part, we shall use Theorem \ref{thm1.10} to prove  Theorem  \ref{thm1.1} (2), (3). Recall that we have such a $g_0$-equivariant relative MMP over $Y$
\[
\xymatrix{
X=X_0\ar@{-->}[r]&X_1\ar@{-->}[r]&\cdots\ar@{-->}[r]&X_i\ar@{-->}[r]&\cdots\ar@{-->}[r]&X_r\ar@{-->}[r]&X_{r+1}=Y_0\ar@{-->}[r]&Y.}
\]
Here, we continue running MMP from $Y_0$ ($X_r\rightarrow Y_0$ is the first Fano contraction) and terminate with the end product $Y$. By the above discussion, we first consider the case when $Y=Y_0$, i.e.\,assume the MMP has only one Fano contraction. Suppose
$$s_0\ge c\ge \dim X+\rho(X)-2.$$
Then by Lemmas \ref{invariant} and \ref{invariant1}, $s_{Y_0}\ge\dim Y_0+\rho(Y_0)-2$. 
Similarly, if $c\ge\dim X+\rho(X)-1$, then  $s_{Y_0}\ge\dim Y_0+\rho(Y_0)-1$.
Now we may start from $Y_0$ and continuously run MMP as mentioned in Theorem \ref{thm1.10}.
\begin{claim}\label{ellip}
For the case when $s_0\ge\dim X+\rho(X)-2$, there are only two choices for the end product $Y$ of MMP: $Y$ is either an elliptic curve or a point.
\end{claim}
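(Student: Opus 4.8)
The plan is to sandwich the cardinality $s_Y := \#\mathrm{TI}_{g_Y}(Y)$ between a lower bound inherited from the hypothesis and the exact value $0$ forced by the geometry of the end product, and then read off the two possibilities for $Y$.

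\emph{Step 1: propagating the cardinality bound along the whole MMP.} By Theorem~\ref{thm1.10} we have a $g_0$-equivariant relative MMP $X=X_0\dashrightarrow\cdots\dashrightarrow X_l\dashrightarrow X_{l+1}=Y$ in which each step $u_i$ is a divisorial contraction, a flip, or a Fano contraction, and $g_0=f^t$ descends to an int-amplified endomorphism $g_i$ on every $X_i$ (in particular on $Y$). I would track the integer $\mu_i:=s_i-\dim X_i-\rho(X_i)$, where $s_i=\#\mathrm{TI}_{g_i}(X_i)$. Lemma~\ref{invariant} (whose statement and proof are insensitive to the position of the step in the MMP) gives $s_{i+1}=s_i-(\rho(X_i)-\rho(X_{i+1}))$ with $\dim X_{i+1}=\dim X_i$ at a birational step, so $\mu_{i+1}=\mu_i$ there; and Lemma~\ref{invariant1} (likewise applicable to every Fano contraction, not only $u_r$) gives $s_{i+1}\ge s_i-(\dim X_i-\dim X_{i+1})-1$ together with $\rho(X_{i+1})=\rho(X_i)-1$, so $\mu_{i+1}\ge\mu_i$ there. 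Hence $\mu_{l+1}\ge\mu_0=s_0-\dim X-\rho(X)\ge -2$, i.e. $s_Y\ge\dim Y+\rho(Y)-2$.

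\emph{Step 2: vanishing of $s_Y$.} By Theorem~\ref{thm1.10} the end product $Y$ is $Q$-abelian, so there is a quasi-\'etale finite cover $\pi:A\to Y$ with $A$ an abelian variety. As $\pi$ is \'etale in codimension one its ramification divisor is zero, so $\pi^*K_Y=K_A=0$; since $\pi^*$ is injective on $\mathrm{N}^1$ (because $\pi_*\pi^*=(\deg\pi)\,\mathrm{id}$) we get $K_Y\equiv 0$, and $K_Y$ is $\mathbb{Q}$-Cartier as $Y$ is $\mathbb{Q}$-factorial klt (being the end of the MMP). Thus $K_Y$ is pseudo-effective and $\mathbb{Q}$-Cartier, and $\deg g_Y>1$ since $g_Y$ is int-amplified; Lemma~\ref{psecod1} then shows $g_Y$ is \'etale in codimension one. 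If there were $V\in\mathrm{TI}_{g_Y}(Y)$, then on one hand $g_Y^*V=sV$ with $s>1$ (the remark following Lemma~\ref{prop3.3}), while on the other hand \'etaleness in codimension one forces the ramification index of $g_Y$ along $V$ to equal $1$, i.e. $s=1$ — a contradiction. Hence $\mathrm{TI}_{g_Y}(Y)=\emptyset$, that is $s_Y=0$.

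\emph{Step 3: conclusion.} Combining, $0=s_Y\ge\dim Y+\rho(Y)-2$, so $\dim Y+\rho(Y)\le 2$. If $\dim Y\ge 1$ then $\rho(Y)\ge 1$, forcing $\dim Y=\rho(Y)=1$; a one-dimensional $Q$-abelian variety is a smooth curve with $K_Y\equiv 0$, hence an elliptic curve. Otherwise $\dim Y=0$ and $Y$ is a point. The delicate part is not any single ingredient but the bookkeeping of Step~1 — checking that Lemmas~\ref{invariant} and \ref{invariant1} apply verbatim to \emph{every} step past the first Fano contraction and that $\mu_i=s_i-\dim X_i-\rho(X_i)$ is monotone — together with confirming that the descended $g_Y$ on $Y$ is genuinely int-amplified, since that is exactly what makes Lemma~\ref{psecod1} and the property ``$g_Y^*V=sV$ with $s>1$'' available in Step~2. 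Once these are in place, the geometric input that $Y$ is $Q$-abelian with $K_Y\equiv 0$ is immediate from Theorem~\ref{thm1.10}.
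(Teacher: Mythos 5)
Your proof is correct and follows essentially the same route as the paper's: propagate the cardinality bound through the whole MMP via Lemmas~\ref{invariant} and~\ref{invariant1}, and use the $Q$-abelianness of $Y$ together with Lemma~\ref{psecod1} to force $s_Y=0$. The only (welcome) cosmetic differences are that you make the bookkeeping explicit via the monotone quantity $\mu_i=s_i-\dim X_i-\rho(X_i)$ and derive $\dim Y+\rho(Y)\le 2$ directly from $s_Y=0$, whereas the paper argues by contradiction in the subcase $\dim Y\ge 2$ and, for $\dim Y=1$, separately rules out $Y\cong\mathbb{P}^1$ by continuing the MMP.
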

\begin{proof}
Suppose the contrary that the claim does not hold. If $\dim Y\ge 2$, then $s_Y\ge \dim Y+\rho(Y)-2\ge 1$, which means the int-amplified endomorphism $g_Y$ has ramification divisors. Since $g_Y$ is not \'etale in codimension one, $K_Y$ is not pseudo-effective by Lemma \ref{psecod1}. However, our end product $Y$ is $Q$-abelian and then $K_Y\sim_{\mathbb{Q}} 0$ (cf.~Theorem \ref{thm1.10}), a contradiction. Therefore, $\dim Y=1$ and by Lemma \ref{lemdimx=1}, $Y$ is either elliptic or rational. If $Y\cong\mathbb{P}^1$, then $K_Y$ is  not pseudo-effective.  By Theorem \ref{thm1.10} again, we can continue to contract $Y$ into a single point. This proves Claim \ref{ellip}.
\end{proof}
\setlength\parskip{8pt}

\textbf{The proof of Theorem \ref{thm1.1} (3).} If $Y$ is elliptic, then $s_Y=0$ (cf.~Lemma \ref{lemdimx=1}), contradicting the deduced result $s_Y\ge \dim Y+\rho(Y)-1$. Therefore, $Y$ is a point (cf.~Claim \ref{ellip}).  By Theorem \ref{thm1.10} (2),  $X$ is rationally connected since the whole $X$ is  a fibre. Then, by Proposition \ref{prou1}, $\textup{Pic}^\circ(X)=0$ and then we may identify the Picard group with the N\'eron--Severi group. Further, $X$ is simply connected with respect to complex topology and $\textup{Pic}(X)$ is torsion free.

\setlength\parskip{0pt}
 Theorem \ref{thm1.10} asserts that $g_0^*|_{\textup{Pic}(X)}$ is diagonalizable over $\mathbb{C}$ if and only if so is $g_Y^*|_{\textup{Pic}(Y)}$. Since $Y$ is a point, $g_0^*|_{\textup{Pic}(X)}$ is diagonalizable over $\mathbb{C}$. Furthermore, each eigenvalue $\lambda$ of $g_0^*|_{\textup{Pic}(X)}$ is either an eigenvalue of $g_Y^*|_{\textup{Pic}(Y)}$ or an eigenvalue of $g_i^*|_{\textup{Pic}(X_i)/u_i^*\textup{Pic}(X_{i+1})}$ for some $i$. Since $Y$ is a point, $\lambda$ is an eigenvalue of $g_i^*|_{\textup{Pic}(X_i)/u_i^*\textup{Pic}(X_{i+1})}$ for some $i$. Since $\rho(X_i/X_{i+1})=1$, $\lambda\in\mathbb{Q}$. Indeed, all these eigenvalues are positive integers (cf.~\cite[Lemmas 5.1 and 5.2]{meng2018semi}). Thus, $g_0^*|_{\textup{Pic}(X)}$ is diagonalizable over $\mathbb{Q}$. This completes the proof of Theorem \ref{thm1.1} (3).
\setlength\parskip{8pt}

\textbf{The proof of Theorem \ref{thm1.1} (2).} By Claim \ref{ellip}, there are two choices for the end product $Y$ of MMP. If $Y$ is a point, then similar to Theorem \ref{thm1.1} (3), $X$ is rationally connected and simply connected with respect to complex topology; if $Y$ is an elliptic curve, then this $Y$ is our $E$ in Theorem \ref{thm1.1} (2). By  Theorem \ref{thm1.10}, $\phi: X\rightarrow Y=E$ is holomorphic and equi-dimensional with every fibre irreducible. Then $\phi: X\rightarrow E$ is proper and surjective, the general fibre of which is connected, and thus $\phi$ is a fibration. Further, $g_0=f^t$  descends to $g_E: E\rightarrow E$ of degree $>1$ (cf.~Theorem \ref{thm1.10}). Furthermore, $g_0^*|_{\textup{NS}_{\mathbb{Q}}(X)}$ is diagonalizable over $\mathbb{C}$ if and only if $g_E^*|_{\textup{NS}_{\mathbb{Q}}(E)}$ is diagonalizable over $\mathbb{C}$ (cf.~\cite[Lemma 9.2]{meng2018building}). Since  we have proved $g_E^*|_{\textup{NS}_{\mathbb{Q}}(E)}$ is diagonalizable over $\mathbb{Q}$ by Lemma \ref{lemdimx=1}, $g_0^*|_{\textup{NS}_{\mathbb{Q}}(X)}$ is diagonalizable over $\mathbb{C}$. As in the case of Theorem \ref{thm1.1} (3), all the eigenvalues of $g_0^*|_{\textup{NS}_{\mathbb{Q}}(X)}$ are rational numbers. Thus, $g_0^*|_{\textup{NS}_{\mathbb{Q}}(X)}$ is diagonalizable over $\mathbb{Q}$.\setlength\parskip{0pt}

Finally, we prove that every fibre of $\phi: X\rightarrow E$ is normal (cf.~\cite[The proof of Theorem 1.3]{zhang2014invariant}).  Let $\Sigma=\{e\in E\mid \phi^*e=X_e\textup{~is not normal}\}$, a finite subset of $E$. By \cite[The proof of Lemma 4.7]{nakayama2010polarized}, $g_E^{-1}(\Sigma)\subseteq \Sigma$,
which  implies  $g_E^{-1}(\Sigma)=\Sigma$ since $\Sigma$ is a finite set.  However, $g_E$ is \'etale and it could not have any $g_E^{-1}$-invariant divisors (cf.~Lemma \ref{lemdimx=1}). As a result, $\Sigma=\emptyset$ and Theorem \ref{thm1.1} (2) holds.
\setlength\parskip{8pt}
\subsubsection{\textbf{The Proof of Theorem \ref{thm1.1} (4) for the case $\dim Y_0\ge 1$}}
~~~~We shall prove $K_X+\sum_{j=1}^{n+\rho}V_j\sim_{\mathbb{Q}}0$ and $f$ is \'etale outside $f^{-1}(\textup{Sing}\,X)\cup(\bigcup V_j)$ for the case when $c=\dim X+\rho(X)$. In this part, the first Fano contraction is enough for our proof. As we proved in Subsubsection \ref{part2}, \setlength\parskip{0pt}if $s_0\ge c\ge\dim X+\rho(X)$, then $s_0=c=\dim X+\rho(X)$ and $s_r(2)=\dim W_0+1$, where $W_0$ is a general fibre of $X_r\rightarrow Y_0$.

\begin{claim}
Suppose $c=\dim X+\rho(X)$. Then $K_{W_0}+\sum_{j=1}^{s_r} V(r)_j|_{W_0}\equiv 0$.
\end{claim}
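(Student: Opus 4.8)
The plan is to restrict the adjunction relation on $X_r$ to the general fibre $W_0$ and identify the restricted sum with the full boundary. Recall from Subsubsection~\ref{partdim0} that $\textup{Pic}^\circ(X_r)=0$, and from Equation~(\ref{111})-type reasoning (now applied one step later, via Lemma~\ref{invariant1} with $k=0$) that all inequalities are equalities; in particular $s_r = n+\rho(X_r)$ and, by Lemma~\ref{verykeylemma1}~(1) applied to the end product of the MMP, $K_{X_r}+\sum_{j=1}^{s_r}V(r)_j\equiv 0$. First I would pull this back to $\sim_{\mathbb{Q}}0$ using $\textup{Pic}^\circ(X_r)=0$ (as in Equation~(\ref{eq010})), obtaining $K_{X_r}+\sum_{j=1}^{s_r}V(r)_j\sim_{\mathbb{Q}}0$.

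Next I would restrict this $\mathbb{Q}$-linear equivalence to the general fibre $W_0 = u_r^{-1}(y_0)$. By adjunction for the fibre of an equi-dimensional Fano contraction with general (hence normal, klt) fibre, $K_{W_0} = (K_{X_r})|_{W_0}$, since $W_0$ moves in a base-point-free family over $Y_0$ and the different vanishes for a general member. Therefore restricting gives
\[
K_{W_0} + \sum_{j=1}^{s_r} V(r)_j|_{W_0} \sim_{\mathbb{Q}} 0.
\]
Now I would invoke the splitting $s_r = s_r(1)+s_r(2)$ from the proof of Lemma~\ref{invariant1}: for $j\le s_r(1)$ the divisor $V(r)_j$ maps to a proper closed subset of $Y_0$, and for a general choice of $y_0$ (avoiding all these finitely many images $G_j$) we have $V(r)_j|_{W_0}=0$; for $s_r(1)<j\le s_r$ the restriction $V(r)_j|_{W_0}$ is a nonzero (ample) divisor. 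Hence $\sum_{j=1}^{s_r}V(r)_j|_{W_0} = \sum_{j=s_r(1)+1}^{s_r}V(r)_j|_{W_0}$, which is exactly the claimed equality $K_{W_0}+\sum_{j=1}^{s_r}V(r)_j|_{W_0}\equiv 0$ once one weakens $\sim_{\mathbb{Q}}$ to $\equiv$.

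The main obstacle I anticipate is the adjunction step $K_{W_0}=(K_{X_r})|_{W_0}$ and the need to know $W_0$ is well-behaved: one must be careful that the relative MMP output $X_r\to Y_0$ is equi-dimensional with irreducible fibres (this is guaranteed by Theorem~\ref{thm1.10}~(2)), and that a general fibre is normal and klt so that adjunction holds with trivial different. A clean way around subtleties is to choose $y_0$ generic so that $W_0$ is a general complete-intersection-type member, avoiding $\textup{Sing}\,X_r$ in codimension issues; alternatively one cites the standard fact that for a fibration the canonical bundle formula degenerates to $K_{W_0}=K_{X_r}|_{W_0}$ on a general fibre. A secondary point to handle carefully is that ``$V(r)_j|_{W_0}=0$ for $j\le s_r(1)$'' requires $y_0$ to avoid finitely many proper closed subsets simultaneously, which is fine since $y_0$ ranges over a dense set of $g_{Y_0}$-periodic points (Lemma~\ref{densepoint}) and we may further shrink to a general such point.
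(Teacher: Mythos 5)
Your argument is circular. You propose to first establish $K_{X_r}+\sum_{j=1}^{s_r}V(r)_j\equiv 0$ (quoting Lemma~\ref{verykeylemma1}~(1) and an ``Equation~(\ref{111})-type'' argument) and then restrict to $W_0$. But Lemma~\ref{verykeylemma1}~(1) carries the hypothesis $\Delta(r)=0$, which is exactly what the surrounding proof of Theorem~\ref{thm1.1}~(4) for $\dim Y_0\ge 1$ has \emph{not yet} established at this point: the claim you are asked to prove is the key ingredient that yields $\Delta(r)|_{W_0}\equiv 0$, then that $\Delta(r)=u_r^*\Delta_{Y_0}$, and only after Claim~\ref{keyclaim} does one conclude $\Delta(r)=0$ and hence $K_{X_r}+\sum V(r)_j\equiv 0$. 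Invoking the conclusion to prove an intermediate step is not admissible. Moreover, the ``Equation~(\ref{111})-type'' argument cannot be run on $X_r$ when $\dim Y_0\ge 1$: Equation~(\ref{111}) came from Proposition~\ref{promm100}, whose hypothesis requires the totally invariant divisors to be ample, and the vertical divisors $V(r)_j$ with $j\le s_r(1)$ are not ample on $X_r$ (indeed $k_jV(r)_j=u_r^*G_j$), so Proposition~\ref{promm100} does not apply to $(X_r,g_r)$.

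The correct route (the paper's) is a further dimension reduction: restrict to the general fibre $W_0$ itself and apply Proposition~\ref{promm100} (or Lemma~\ref{lemdimx=1} if $\dim W_0=1$) to the pair $(W_0,g_{W_0})$. One already knows from Claim~\ref{claim000} that $g_{W_0}$ is polarized, from the proof of Lemma~\ref{invariant1} that $V(r)_j|_{W_0}$ is ample for each $j>s_r(1)$ and zero for $j\le s_r(1)$ (choosing $y_0$ general), and from the equality case $s_r(2)=\dim W_0+1$ that the ample divisors on $W_0$ attain the maximum. Proposition~\ref{promm100} then gives $K_{W_0}+\sum_{j>s_r(1)}V(r)_j|_{W_0}\equiv 0$, and the vanishing of the vertical restrictions converts this into the claimed identity. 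Your adjunction observation $K_{W_0}=(K_{X_r})|_{W_0}$ for a general fibre is fine and is indeed implicit in Equation~(\ref{eq4}), but it is not the engine of the proof; the engine is the application of Proposition~\ref{promm100} on $W_0$, which you did not make.
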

\begin{proof}
As we proved in Lemma \ref{invariant1},  $V(r)_j|_{W_0}$ is ample on $W_0$  for each $j>s_r(1)$. When $c=\dim X+\rho(X)$,  $s_r(2)=\dim W_0+1$.
Besides, by Claim \ref{claim000}, $g_{W_0}=g_r|_{W_0}$ is polarized by the restriction of an ample divisor on $X_r$.  If $\dim W_0=1$, then by Case (b) of Lemma \ref{lemdimx=1}, $K_{W_0}+\sum V(r)_j|_{W_0}\equiv 0$; if $\dim W_0\ge 2$, then applying Proposition \ref{promm100} to the pair $(W_0,g_{W_0})$ with $g_{W_0}^{-1}$-invariant ample divisors $V_j|_{W_0}(j>s_r(1))$, we have the following:
\begin{equation}\label{eq4}
(K_{X_r}+\sum\limits_{j=1}^{s_r}V(r)_j)|_{W_0}\equiv K_{W_0}+\sum\limits_{j=s_r(1)+1}^{s_r(1)+s_r(2)}V(r)_l|_{W_0}\equiv  0.
\end{equation}
Here, $V(r)_j|_{W_0}=0$ when $j\le s_r(1)$ (cf.~Case (1) in the proof of Lemma \ref{invariant1}). We have completed the proof of our claim.
\end{proof}
Recall Equation (\ref{eq2}) for the case when the index $i=r$ as follows:
\begin{equation}\label{mmwxnan}
K_{X_r}+\sum\limits_{j=1}^{s_r} V(r)_j=g_r^*(K_{X_r}+\sum\limits_{j=1}^{s_r}V(r)_j)+\Delta(r).
\end{equation}
Restricting Equation (\ref{mmwxnan}) to the general fibre $W_0$ and then comparing it with Equation (\ref{eq4}), we get $\Delta(r)|_{W_0}\equiv 0$. Since $\Delta(r)$ is effective in the log ramification divisor formula, by Cone Theorem, $\Delta(r)$ is the pullback of some effective $\mathbb{Q}$-divisor $\Delta_{Y_0}\subseteq Y_0$. 
\begin{claim}\label{keyclaim}
$\Delta_{Y_0}=0$, i.e.\,$\Delta(r)=0$.
\end{claim}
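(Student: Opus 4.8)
The plan is to descend the relation in Equation (\ref{mmwxnan}) to $Y_0$ and then invoke the inductive hypothesis there. First I would observe that, since $\Delta(r) = u_r^*\Delta_{Y_0}$ for an effective $\mathbb{Q}$-divisor $\Delta_{Y_0}$ on $Y_0$, and since $g_0$ descends compatibly to $g_r$ on $X_r$ and $g_{Y_0}$ on $Y_0$ (with $g_r \circ u_r = u_r \circ \dots$ — more precisely $u_r$ is $g_0$-equivariant), I can push the data through $u_r$. The key point is that the log ramification divisor formula Equation (\ref{mmwxnan}) together with $K_{Y_0} + \sum_{i=1}^{s_r(1)} G_i \sim_{\mathbb{Q}} 0$ from Equation (\ref{eq111111}) should force $\Delta_{Y_0}$ to be the log ramification divisor of $g_{Y_0}$ with respect to the pair $(Y_0, \sum G_i)$, modulo checking that the divisorial parts match up under the formula $u_r^* G_j = k_j V(r)_j$.

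Next I would compare the two ways of writing the adjunction-type relation. On one hand, restricting/descending Equation (\ref{mmwxnan}) and using $V(r)_j|_{W_0}$ is ample for $j > s_r(1)$ while $V(r)_j = \frac{1}{k_j}u_r^* G_j$ for $j \le s_r(1)$, I would extract the purely $Y_0$-level identity: something of the shape $K_{Y_0} + \sum_{i=1}^{s_r(1)} G_i = g_{Y_0}^*(K_{Y_0} + \sum_{i=1}^{s_r(1)} G_i) + \Delta_{Y_0}$ (possibly after accounting for the ramification of $u_r$ along the $G_i$, which is absorbed because $u_r^*G_j = k_j V(r)_j$ is already the reduced-times-multiplicity expression). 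On the other hand, the genuine log ramification divisor formula for $(Y_0, \sum_{i=1}^{s_r(1)} G_i)$ under $g_{Y_0}$ reads $K_{Y_0} + \sum G_i = g_{Y_0}^*(K_{Y_0} + \sum G_i) + \Delta(g_{Y_0})$ with $\Delta(g_{Y_0})$ effective and sharing no components with $\sum G_i$. Matching these two, and using that $K_{Y_0} + \sum G_i \sim_{\mathbb{Q}} 0$ so that $g_{Y_0}^*(K_{Y_0}+\sum G_i) \sim_{\mathbb{Q}} 0$ as well, gives $\Delta_{Y_0} \sim_{\mathbb{Q}} \Delta(g_{Y_0}) \sim_{\mathbb{Q}} 0$.

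Then I would finish by applying Lemma \ref{prop3.3} (or Lemma \ref{lemnumer}, since $g_{Y_0}$ is int-amplified and $\Delta_{Y_0}$ is $\mathbb{Q}$-Cartier with $g_{Y_0}^*\Delta_{Y_0} \equiv \Delta_{Y_0}$ up to the pseudo-effective error): an effective $\mathbb{Q}$-divisor $\Delta_{Y_0}$ satisfying $\Delta_{Y_0} \equiv_w g_{Y_0}^*\Delta_{Y_0} + (\text{effective})$ must vanish, by Lemma \ref{mmnxbxhwa}. Concretely, from $\Delta_{Y_0} \sim_{\mathbb{Q}} 0$ together with effectivity we immediately get $\Delta_{Y_0} = 0$, hence $\Delta(r) = u_r^*\Delta_{Y_0} = 0$. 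This is exactly the hypothesis needed to apply Lemma \ref{verykeylemma1}: with $\Delta(r) = 0$, $\textup{Pic}^\circ(X_r) = 0$ (because, as in Subsubsection \ref{partdim0}, $X_r$ is rationally connected — it has a rationally connected fibre $W_0$ over a $Q$-abelian $Y_0$ of dimension $\ge 1$... wait, that is not automatic; I would instead argue $\textup{Pic}^\circ(X_r)$ injects into $\textup{Pic}^\circ(Y_0)$ which is an abelian variety, and handle that point carefully), and every $\delta$-exceptional divisor lying in $\textup{TI}_f(X_0)$ because $s_i = n + \rho(X_i)$ for all $i$ forces the exceptional divisor of each divisorial contraction into $\sum_j V(i)_j$. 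Then Lemma \ref{verykeylemma1}(2) yields $K_X + \sum V_j \equiv 0$ and étaleness of $f$ outside $(\bigcup V_j) \cup f^{-1}(\textup{Sing}\,X)$, with the upgrade to $\sim_{\mathbb{Q}} 0$ coming from $\textup{Pic}^\circ(X) = 0$.

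The main obstacle I anticipate is the bookkeeping in the descent of the log ramification formula through $u_r$: I must verify that the ramification of $u_r$ itself contributes nothing spurious, i.e., that the relation $u_r^*G_j = k_j V(r)_j$ (with $G_j$ reduced) correctly threads the different/adjunction computation so that the $Y_0$-level formula I extract is genuinely the log ramification divisor formula for $(Y_0, \sum G_i)$ under $g_{Y_0}$ and not some perturbation of it. A secondary technical point is justifying $\textup{Pic}^\circ(X_r) = 0$ (equivalently $q(X_r) = 0$) in the case $\dim Y_0 \ge 1$, which I would get from the fact that $X_r$ is covered by rationally connected fibres over $Y_0$ so that $\textup{Pic}^\circ(X_r) \twoheadleftarrow \textup{Pic}^\circ(Y_0)$ is in fact an isomorphism, combined with the inductive structure — but since $K_X + \sum V_j \equiv 0$ and we only need $\sim_{\mathbb{Q}} 0$ when $c$ is maximal, I can alternatively route the torsion-freeness through whichever of $X$, $X_r$ is already known to be rationally connected in this maximal-$c$ regime.
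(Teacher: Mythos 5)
Your plan is a genuinely different route from the paper, and it has a gap at the crucial step. The paper does \emph{not} descend the log ramification formula from $X_r$ to $Y_0$ directly. Instead, in Steps 1--4 it constructs a generically finite dominant morphism $\tau_{k_0}\colon W(k_0)\to Y_0$ by iteratively normalizing irreducible components of the horizontal divisors $V(r)_{s_r},\dots,V(r)_{k_0}$ (those that dominate $Y_0$), cutting the dimension down until $\dim W(k_0)=\dim Y_0$; it then takes the Stein factorization $\tau_{k_0}=\gamma\circ\nu$ with $\gamma$ finite, pulls back the inductive identity $K_{Y_0}+\sum G_j\sim_{\mathbb{Q}}0$ via $\gamma$, compares against the adjunction-pulled-back log ramification formula on $W(k_0)$, and finishes by the intersection-number growth argument from Lemma \ref{mmnxbxhwa} applied on $S(k_0)$. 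The construction of this finite cover is the mechanism by which the ramification of $g_r$ is converted into ramification data on a variety that genuinely maps finitely onto $Y_0$; your proposal skips it entirely.

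The specific gap in your argument is the step where you claim to ``extract the purely $Y_0$-level identity'' $K_{Y_0}+\sum G_i = g_{Y_0}^*(K_{Y_0}+\sum G_i)+\Delta_{Y_0}$ from Equation (\ref{mmwxnan}). The Fano contraction $u_r\colon X_r\to Y_0$ has positive relative dimension, so there is no push-forward/descent that turns the $X_r$-level log ramification formula into a $Y_0$-level one: the horizontal divisors $V(r)_j$ for $j>s_r(1)$ have no divisorial image on $Y_0$, and $u_{r*}$ kills all horizontal divisor classes. In fact, even after one knows $K_{Y_0}+\sum G_i\sim_{\mathbb{Q}}0$ (which does force the \emph{intrinsic} log ramification divisor $\Delta(g_{Y_0})$ of $g_{Y_0}$ to vanish), there is no a priori identification $\Delta_{Y_0}=\Delta(g_{Y_0})$: the former is defined by $\Delta(r)=u_r^*\Delta_{Y_0}$ and records ramification of $g_r$ along vertical divisors of $X_r$, while the latter records ramification of $g_{Y_0}$ on $Y_0$; these are not in lockstep for a Mori fibre space that is not a product, and relating them is precisely what the paper's iterated-normalization-plus-Stein-factorization machinery accomplishes. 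Without that bridge, the conclusion $\Delta_{Y_0}\sim_{\mathbb{Q}}\Delta(g_{Y_0})\sim_{\mathbb{Q}}0$ is unjustified. Your observations about $\textup{Pic}^\circ(X_r)=0$ and the subsequent invocation of Lemma \ref{verykeylemma1} are tangential to this claim and are handled separately in the paper; the gap is entirely in the descent step.
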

Suppose Claim \ref{keyclaim} holds for the time being. Then by Equation (\ref{mmwxnan}), we have
\begin{equation}\label{mmmmwan}
K_{X_r}+\sum_{j=1}^{s_r} V(r)_j=g_r^*(K_{X_r}+\sum_{j=1}^{s_r} V(r)_j).
\end{equation}
Suppose $K_{X_r}+\sum_j V(r)_j\not\equiv 0$. Then from Equation (\ref{mmmmwan}), $1$ is an eigenvalue for the operator $g_r^*|_{\textup{NS}_{\mathbb{Q}}(X_r)}$, a contradiction  (cf.~Lemma \ref{prop3.3}). This forces $K_{X_r}+\sum V(r)_j\equiv 0$ and thus $K_{X_r}+\sum V(r)_j\sim_\mathbb{Q} 0$, since $\textup{Pic}^\circ(X_r)=\textup{Pic}^\circ(X_0)=0$ (cf.~Proposition \ref{prou1} and the proof of Theorem \ref{thm1.1} (3)).
Then Theorem \ref{thm1.1} (4) follows from Lemma \ref{verykeylemma1}.

Now, the only thing we need to do is to prove Claim \ref{keyclaim}.
\setlength\parskip{8pt}

\textit{Proof of Claim \ref{keyclaim}}. We shall construct a generically finite and surjective morphism to $Y_0$. Let $W(s_r+1):=X_r$.  We take four steps to prove the claim. \setlength\parskip{0pt}

\textbf{Step 1.} First, let $\sigma(s_r): W(s_r)\rightarrow X_r$ be the normalization $W(s_r)\rightarrow V(r)_{s_r}$ followed by an inclusion map $i:V(r)_{s_r}\hookrightarrow X_r$. Then $\dim W(s_r)=\dim X_r-1$. It is easy to get the following claim by considering the commutative diagram and applying Lemma \ref{lem3.6}.
\begin{claim}\label{claim6}
With the same symbols given above, there exists an int-amplified endomorphism $g(s_r)$ of $W(s_r)$ such that $\sigma(s_r)\circ g(s_r)=g_r\circ \sigma(s_r)$.
\end{claim}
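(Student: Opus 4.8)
The plan is to build $g(s_r)$ in two moves — first descend $g_r$ to an endomorphism of the prime divisor $V(r)_{s_r}$, then lift that endomorphism to the normalization $W(s_r)$ — and finally invoke Lemma \ref{lem3.6} to promote int-amplifiedness along the normalization map.

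First I would use total invariance. Since $V(r)_{s_r}\in\textup{TI}_{g_r}(X_r)$ means $g_r^{-1}(V(r)_{s_r})=V(r)_{s_r}$ set-theoretically and $g_r$ is finite and surjective, we get $g_r(V(r)_{s_r})=g_r(g_r^{-1}(V(r)_{s_r}))=V(r)_{s_r}$, so $g_r$ restricts to a finite surjective endomorphism $h:=g_r|_{V(r)_{s_r}}$ of $V(r)_{s_r}$, compatible with the inclusion $\iota:V(r)_{s_r}\hookrightarrow X_r$ in the sense that $g_r\circ\iota=\iota\circ h$. Writing $\nu:W(s_r)\to V(r)_{s_r}$ for the normalization, the composite $h\circ\nu$ is a dominant morphism from the normal, irreducible variety $W(s_r)$ (it is irreducible because $V(r)_{s_r}$ is a prime divisor); by the universal property of normalization it factors uniquely through $\nu$, giving a morphism $g(s_r):W(s_r)\to W(s_r)$ with $\nu\circ g(s_r)=h\circ\nu$, and $g(s_r)$ is then automatically finite and surjective (it is so after composition with the finite surjective $\nu$, and $W(s_r)$ is irreducible and proper). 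Composing further with $\iota$ yields $\sigma(s_r)\circ g(s_r)=g_r\circ\sigma(s_r)$, which is exactly the commutative diagram asserted in the claim.

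Next I would check that $h$ is int-amplified. By Definition \ref{defiample}(3) choose ample line bundles $L,M$ on $X_r$ with $g_r^*L\cong L\otimes M$. Restricting along $\iota$ and using $g_r\circ\iota=\iota\circ h$ gives $h^*(L|_{V(r)_{s_r}})\cong (L|_{V(r)_{s_r}})\otimes(M|_{V(r)_{s_r}})$, and both $L|_{V(r)_{s_r}}$ and $M|_{V(r)_{s_r}}$ are ample because the restriction of an ample line bundle to a closed subvariety is ample; hence $h$ is int-amplified. Finally, $\nu:W(s_r)\to V(r)_{s_r}$ is a finite (in particular generically finite) surjective morphism of projective varieties satisfying $\nu\circ g(s_r)=h\circ\nu$, so Lemma \ref{lem3.6} applies and gives that $g(s_r)$ is int-amplified precisely because $h$ is.

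I do not expect a genuine obstacle here; the proof is essentially a diagram chase plus one restriction argument. The one point requiring care is that $V(r)_{s_r}$ may fail to be normal — this is exactly why the construction is routed through the normalization $W(s_r)$ and why Lemma \ref{lem3.6} (rather than a direct restriction to $X_r$, which is not a surjective morphism) is the appropriate tool; one should also be sure to apply the universal property of normalization with the normal variety $W(s_r)$ as the source, so that the lift $g(s_r)$ genuinely exists and is unique.
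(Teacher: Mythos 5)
Your proof is correct and supplies exactly the details that the paper compresses into a single sentence (``consider the commutative diagram and apply Lemma~\ref{lem3.6}''): restrict $g_r$ to the totally invariant prime divisor $V(r)_{s_r}$ to get a finite surjective endomorphism $h$, lift $h$ through the normalization $\nu$ via the universal property to obtain $g(s_r)$, check $h$ is int-amplified by restricting the ample Cartier classes witnessing int-amplifiedness of $g_r$, and then transport that property across the finite surjective $\nu$ using Lemma~\ref{lem3.6}. This is the intended argument; no gaps.
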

We return back to Step 1 of our proof for Claim \ref{keyclaim}. Pulling back Equation (\ref{mmwxnan}) along the morphism $\sigma(s_r)$,  we get the following (cf.~Claim \ref{claim6}):
\begin{equation*}
K_{W(s_r)}+D(s_r):=\sigma(s_r)^*(K_{X_r}+\sum V(r)_j)=g(s_r)^*(K_{W(s_r)}+D(s_r))+\Delta(r)_{s_r}.
\end{equation*}
Here, $\Delta(r)_{s_r}$ is the pullback of $\Delta(r)$. Note that each $V(r)_j(j\le s_r(1))$ intersects $V(r)_{s_r}$ (cf.~the proof of Lemma \ref{invariant1}). Indeed, since $k_jV(r)_j=u_r^*G_j$ for each $j\le s_r(1)$, $V(r)_j$ contains all the fibres over $G_j$. Also, $V(r)_{s_r}$ dominates $Y_0$ and hence $V(r)_j$ must intersect $V(r)_{s_r}$. Then the pullback of each $V(r)_j\,(j\le s_r(1))$ survives in $W(s_r)$. The next  claim follows from \cite[Corollary 3.11]{shokurov91} and \cite[Corollary 16.7]{kollar1992flips} (also cf.~\cite[Proposition 2.5]{kollar2013singularity}).

\begin{claim}\label{cliam3.10}
The pair $(W(s_r),D(s_r))$ is log canonical and the support of the pullback  $\sum_{j=1}^{s_r(1)}\sigma(s_r)^*V(r)_{j}$ (as a reduced divisor) is contained in $D(s_r)$, i.e.\,
$$D(s_r)\ge\sum_{j=1}^{s_r(1)}\textup{Supp}\,\sigma(s_r)^*V(r)_j.$$ 
\end{claim}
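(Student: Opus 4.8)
The plan is to obtain Claim \ref{cliam3.10} from inversion of adjunction together with the standard description of the different, in complete analogy with Equation (\ref{mm2}) in the proof of Proposition \ref{promm100}.

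First I would record that the pair $(X_r,\sum_{j=1}^{s_r}V(r)_j)$ is log canonical: the variety $X_r$ is $\mathbb{Q}$-factorial and klt, hence log canonical; each $V(r)_j$ is $\mathbb{Q}$-Cartier; and $g_r$ is an int-amplified endomorphism of degree $>1$ which ramifies around every $V(r)_j$, since each totally invariant prime divisor occurs in the ramification divisor (Lemma \ref{prop3.3}). Thus Lemma \ref{lem1} applies. Set $S:=V(r)_{s_r}$ and $B:=\sum_{j\ne s_r}V(r)_j$; the prime divisors $S$ and the components of $B$ are pairwise distinct, so $(X_r,S+B)$ is a log pair with coefficients in $[0,1]$, log canonical near $S$, and the divisor $D(s_r)$ fixed by $K_{W(s_r)}+D(s_r)=\sigma(s_r)^*(K_{X_r}+S+B)$ is exactly the different $\textup{Diff}_{W(s_r)}(B)$ of $B$ on the normalization of $S$. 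Proposition \ref{normallc} then yields at once that $(W(s_r),D(s_r))$ is log canonical; in particular $D(s_r)\ge 0$.

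For the inclusion it is enough to show that every prime divisor $Z$ occurring in some $\sigma(s_r)^*V(r)_j$ with $j\le s_r(1)$ appears in $D(s_r)$ with coefficient $\ge 1$ (hence exactly $1$, since the pair is log canonical), which forces $D(s_r)\ge\sum_{j=1}^{s_r(1)}\textup{Supp}\,\sigma(s_r)^*V(r)_j$. We already know that $\sigma(s_r)^*V(r)_j\ne 0$ for $j\le s_r(1)$: the relation $k_jV(r)_j=u_r^*G_j$ forces $V(r)_j$ to contain every fibre of $u_r$ over $G_j$, while $S=V(r)_{s_r}$ dominates $Y_0$, so $S$ meets $V(r)_j$ in codimension one on $S$. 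Such a component $Z$ is mapped by $\sigma(s_r)$ into $S\cap V(r)_j$, along which $(X_r,\sum_j V(r)_j)$ fails to be klt, two reduced boundary components meeting there. Passing to the normalization $S^{\nu}=W(s_r)$, the divisorial strata lying over such centres are (divisorial) non-klt centres of $(W(s_r),D(s_r))$ by adjunction, and a divisorial non-klt centre of a log canonical pair has coefficient $1$ in the boundary; equivalently, this is the content of \cite[Corollary 3.11]{shokurov91} and \cite[Corollary 16.7]{kollar1992flips}, applied exactly as for Equation (\ref{mm2}). Running over all $j\le s_r(1)$ completes the claim.

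The log canonicity is routine once the ambient pair $(X_r,\sum_j V(r)_j)$ is recognized as log canonical. The subtle point, and the one I would be most careful about, is the coefficient bound: the multiplicity of $Z$ in $\sigma(s_r)^*V(r)_j$ taken alone may be a proper fraction, and what pushes the coefficient of $Z$ in $D(s_r)$ up to $1$ is the extra contribution of $\textup{Diff}_{W(s_r)}(0)$ along these strata, dictated by log canonicity of $(X_r,S+B)$. For this reason I would argue via the compatibility of non-klt centres under adjunction (the cited corollaries of Shokurov and Kollár), rather than by attempting a direct computation of $\sigma(s_r)^*V(r)_j$.
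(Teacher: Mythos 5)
Your argument is correct and follows essentially the same route as the paper: Kawakita inversion of adjunction (Proposition~\ref{normallc}) gives log canonicity of $(W(s_r),D(s_r))$, and the Shokurov/Koll\'ar description of the different pushes the coefficient of each component of $\operatorname{Supp}\sigma(s_r)^*V(r)_j$ up to exactly $1$. Where the paper writes out the explicit coefficient $1-\tfrac1m+\sum\tfrac{r_j}{m}$ and bounds $r_j\ge 1$, you phrase the same ingredient abstractly as ``divisorial non-klt centres of an lc pair have coefficient $1$''; the two formulations amount to the same appeal to the cited corollaries, and you correctly flag and resolve the same subtlety about the fractional multiplicity of $\sigma(s_r)^*V(r)_j$ alone.
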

\begin{proof}
	The first part follows from Proposition \ref{normallc}. For the second part, on the one hand, since $(W(s_r),D(s_r))$ is log canonical, the coefficient of each component in $D(s_r)$ cannot exceed $1$. 
	On the other hand, for each irreducible component $P(j)_i$ of the support of the intersection $V(r)_{s_r}\cap V(r)_j$, note that $(X_r,V(r)_{s_r})$ should be log terminal around $P(j)_i$ (cf.~\cite[Lemma 2.27]{kollar2008birational}). Hence, the coefficient of $P(j)_i$ in $D(s_r)$ is given by (cf.~\cite[Corollaries  3.10 and 3.11]{shokurov91})
	$$1-\frac{1}{m}+\sum\frac{r_j}{m},$$ 	
	where $r_j/m$ is the multiplicity of $\sigma(s_r)^*V(r)_j$ around $P(j)_i$ and $r_j$ is a natural number. 
	Since $V(r)_j\,(j\le s_r(1))$ intersects $V(r)_{s_r}$ as we discussed above, $r_j\ge 1$ and hence the coefficient of $P(j)_i$ in $D(s_r)$ is no less than $1$.
	In conclusion, the total coefficient of each component of $\textup{Supp}\,\sigma(s_r)^*V(r)_j\,(j\le s_r(1))$ in $D(s_r)$  is $1$ and our claim holds.
\end{proof}

We come back to the proof of Claim \ref{keyclaim}. Suppose $\dim X_r-\dim Y_0=1$. Then we go directly to Step 4 with $k_0=s_r$.

\textbf{Step 2.} Suppose $\dim X_r-\dim Y_0\ge 2$. Then $\dim W_0\ge 2$. Therefore, $V(r)_{s_r}|_{W_0}$ is connected since it is ample on $W_0$ (cf.~\cite[Corollary 7.9]{hartshorne2013algebraic}). Further, for each $s_r(1)<j<s_r$, $V(r)_j|_{W_0}$ is also ample and thus nonzero on $W_0$. 

Since $V(r)_{s_r}$ also intersects $V(r)_j$ for each $s_r(1)<j<s_r$ by the ampleness of $V(r)_j$. With  the same proof of Claim \ref{cliam3.10}, the support of $\sigma(s_r)^*V(r)_j(s_r(1)<j<s_r)$, which is a reduced divisor, is contained in $D(s_r)$.

We claim that each $\textup{Supp}\,\sigma(s_r)^*V(r)_j\, (s_r(1)<j<s_r)$ still dominates $Y_0$. Indeed, a general fibre of $V(r)_{s_r}\hookrightarrow X_r\rightarrow Y_0$  
is as the form of $W_0\cap V(r)_{s_r}$. Since each $V(r)_j|_{W_0}\,(s_r(1)<j<s_r)$ is ample on $W_0$,  the restriction $V(r)_j|_{W_0\cap V(r)_{s_r}}$ is still ample on $W_0\cap V(r)_{s_r}$ for $s_r(1)<j<s_r$. Note that this can only be obtained under the condition that $\dim W_0\ge 2$. Therefore, the nonzero divisor $V(r)_j|_{V(r)_{s_r}}$ on $V(r)_{s_r}$ intersects  the general fibre of $V(r)_{s_r}\rightarrow Y_0$ and thus its support dominates $Y_0$. Moreover, normalization is  finite, and hence $\sigma(s_r)^*V(r)_j$ is nonzero and ample when restricted to any general fibre of $W(s_r)\rightarrow Y_0$ for any $s_r(1)<j<s_r$. As in the proof of  Proposition \ref{promm100}, we fix an (integral) irreducible component $\textup{Supp}\,B_{s_r-1}~(\le D(s_r))$ of $\textup{Supp}\,\sigma(s_r)^*V(r)_{s_r-1}$ dominating $Y_0$ and then take the normalization $W(s_r-1)$ of $B_{s_r-1}$ followed by the inclusion to get the next morphism $\sigma(s_r-1):W(s_r-1)\rightarrow W(s_r)$.

\textbf{Step 3.} In general, for each $s_r\ge k\ge s_r-\dim W_0+1=s_r(1)+2$, let $\sigma(k): W(k)\rightarrow W(k+1)$ be the normalization of a fixed (integral) irreducible component $B_k$ of $\textup{Supp}\,V(r)_k|_{W(k+1)}$ dominating $Y_0$, followed by inclusion. Such $B_k$ exists by the induction and the following condition. Since the dimension of the fibre of $W(k+2)\rightarrow Y_0$ is $\dim W_0-s_r+k+1$ which is at least $2$ by  assumption, $(\sigma(s_r)\circ\cdots\circ\sigma(k+2))^*V(r)_k$ intersects $B_{k+1}$ and hence the support of the pullback  $V(r)_k|_{W(k+1)}$  dominates $Y_0$ with the same proof as in Step 2. 

Moreover, for each $k\ge s_r(1)+2$, $B_k$ intersects  $V(r)_j|_{W(k+1)}\,(j\le s_r(1))$ and hence the support of the pullback of $V(r)_j(j\le s_r(1))$ to $W(k)$ (as a reduced divisor) is contained in $D(k)$ (cf.~Claim \ref{cliam3.10}).
Now, $\dim W(k)=\dim X_r-(s_r-k)-1$ and similar to Claim \ref{claim6}, we get an int-amplified endomorphism $g(k)$ of $W(k)$, commuting with each  $g(l)$ for $l>k$. Therefore, we get the following equations:
\begin{equation}\label{eq5}
K_{W(k)}+D(k):=\sigma(k)^*(K_{W(k+1)}+D(k+1))=g(k)^*(K_{W(k)}+D(k))+\Delta(r)_k
\end{equation}
with $k=s_r,s_{r-1},\cdots, k_0:=s_r(1)+2$. For each  $k$, $\Delta(r)_k$ is the pullback of $\Delta(r)$; the pair $(W(k), D(k))$ is log canonical with $D(k)\ge\sum_{j=1}^{s_r(1)}\textup{Supp}\,V(r)_j|_{W(k)}$.

\textbf{Step 4.} Let $R_j\,(j\le s_r(1))$ denote the support of the pullback of $V(r)_j$ to $W(k_0)$, which are all reduced integral divisors (may not be irreducible). Then $D(k_0)\ge \sum_{j =1}^{s_r(1)} R_j$. Let
$$\tau_{k_0}:=u_r\circ\sigma(s_r)\cdots\circ\sigma(k_0): W(k_0)\rightarrow\cdots\rightarrow X_r\rightarrow Y_0.$$
By construction, $\tau_{k_0}$ is dominant and projective. Further, $\dim W(k_0)=\dim Y_0$, and thus $\tau_{k_0}$ is generically finite. By the commutative diagram, we have $\tau_{k_0}\circ g(k_0)=g_{Y_0}\circ \tau_{k_0}$. Also,  taking the Stein factorization of  $\tau_{k_0}$, we get a birational morphism $\nu:W(k_0)\rightarrow S(k_0)$ with connected fibres and a finite morphism $\gamma:S(k_0)\rightarrow Y_0$ such that $\tau_{k_0}=\gamma\circ\nu$. 

On the one hand, since $\gamma$ is finite, by the log ramification divisor formula, we have:
\begin{equation}\label{eq23}
K_{S(k_0)}+\sum\textup{Supp}~\gamma^* G_j-R_\gamma=\gamma^*(K_{Y_0}+\sum G_j),
\end{equation}
where $R_{\gamma}$ is  effective, having no common components with $\sum\textup{Supp}\,\gamma^*G_j$. Since the right-hand side of Equation (\ref{eq23}) is $\mathbb{Q}$-Cartier, we can pull back Equation (\ref{eq23}) to $W(k_0)$ under the birational morphism $\nu$ (cf.~\cite[Lemma 2.7]{horing2014singularities}): 
\begin{align}\label{eq24}
	K_{W(k_0)}+\nu_*^{-1}(\sum_{j=1}^{s_r(1)}\textup{Supp}~\gamma^*G_j-R_{\gamma})-F=
	\tau_{k_0}^*(K_{Y_0}+\sum_{j=1}^{s_r(1)} G_j).
\end{align}
Here, $F$ is $\nu$-exceptional, which may not be effective. 
On the other hand, we recall the following equations (cf.~Equation (\ref{eq5}) and the proof of Lemma \ref{invariant1}):
\begin{align}
&K_{W(k_0)}+D(k_0)=g(k_0)^*(K_{W(k_0)}+D(k_0))+\Delta(r)_{k_0}, \label{eqq7}\\
&k_jV(r)_j=u_r^*G_j(1\le j\le s_r(1)),~~~\Delta(r)=u_r^*\Delta_{Y_0}.\label{eqe9}
\end{align}
 Putting Equation (\ref{eq24}), (\ref{eqq7}) and (\ref{eqe9})  together, we get
\begin{equation}\label{mmwmhjbjl}
	\tau_{k_0}^*(K_{Y_0}+\sum G_j)+M+F=g(k_0)^*(\tau_{k_0}^*(K_{Y_0}+\sum G_j)+M+F)+\tau_{k_0}^*\Delta_{Y_0}.
\end{equation}
Here, $M:=D(k_0)-\nu_*^{-1}(\sum_{j=1}^{s_r(1)}\textup{Supp}~\gamma^*G_j)+\nu_*^{-1}R_{\gamma}$ and we claim that $M$ is effective. Indeed, according to Equation (\ref{eqe9}) and  the notation at the beginning of Step 4, we see that,
$$\nu_*^{-1}\sum_{j=1}^{s_r(1)}\textup{Supp}~\gamma^*G_j\le \sum_{j=1}^{s_r(1)} \textup{Supp}~\tau_{k_0}^*G_j=\sum_{j=1}^{s_r(1)}R_j.$$ 
Therefore $M\ge D(k_0)-\sum R_j+\nu_*^{-1}R_\gamma\ge0$ (cf.~Claim \ref{cliam3.10}), which completes the proof of our claim. Further, applying the inductive hypothesis on $Y_0$ (cf.~Equation (\ref{eq111111})) to Equation (\ref{mmwmhjbjl}), we get
\begin{equation}\label{mmwmzybj}
M+F\equiv g(k_0)^*(M+F)+\tau_{k_0}^*\Delta_{Y_0}.
\end{equation}

Now, suppose that $\Delta_{Y_0}\neq 0$. Then $N:=\tau_{k_0}^*\Delta_{Y_0}$ is some nonzero effective $\mathbb{Q}$-divisor, which is not $\nu$-exceptional on $W(k_0)$. We shall apply the method in the proof of Lemma \ref{mmnxbxhwa} to find the contradiction. Multiplying Equation (\ref{mmwmzybj}) by a positive integer, we may assume $N$ is integral.
Then  substituting the expression of $M+F$  to the right-hand side $(m-1)$-times, we get
$$M+F=(g(k_0)^m)^* (M+F)+\sum_{j=0}^{m-1}(g(k_0)^j)^*N.$$

According to the commutative diagram, $(g(k_0)^m)^*F$ is also $\nu$-exceptional for each natural number $m$.  Fixing an ample Cartier divisor $H$ on $S(k_0)$ and pulling back to $W(k_0)$ along the birational morphism $\nu$, we get the following inequality  by projection formula:
$$(M\cdot \nu^*H^{\dim W(k_0)-1})\ge \sum_{j=0}^{m-1}\Big(\nu_*(g(k_0)^j)^*N\cdot H^{\dim S(k_0)-1}\Big).$$
Note that  $N$ is integral and not $\nu$-exceptional, and thus $\nu_*(g(k_0)^j)^*N$ is an effective integral divisor on $S(k_0)$. So the right-hand side tends to infinity if we let $m\rightarrow \infty$, a contradiction. This in turn completes the proof of Claim \ref{keyclaim} and also Theorem \ref{thm1.1} (4).
\setlength\parskip{8pt}

We give the following remark at the end of the proof of Theorem \ref{thm1.1}.\setlength\parskip{0pt}
\begin{remark}
\textup{When extending the results in \cite[Theorem 1.3]{zhang2014invariant}, we improve the original proof from the following points. First, when considering the Fano contraction $u_r:X_r\rightarrow Y_0$, the pullback of $G_j\,(j\le s_r(1))$ is a multiple of $V(r)_j$ and thus may not be reduced, since $V(r)_j$ is only $\mathbb{Q}$-Cartier; Second, when proving Proposition \ref{promm100}, $D_i$ may not be reduced while it holds for the case when $s=n+1$ (cf.~the proof of Remark \ref{mm2.24} and \cite[Proposition 2.12]{zhang2014invariant}); Finally, for each $V(r)_j$, the restriction to general fibre $W_0$ may be reducible (but still reduced) even though $V(r)_j$ is irreducible, and thus we need to                                                                                                                                                                                                                                                                                                                                                                                                                                                                                                                                                                                                                                                                                                                                                                                                                                                                                                                                                                                                                                                                                                                                                                                                                                                                                                                                                                                                                                                                                                                                                                                                                                                                                                                                                                                                                                                                                                                                                                                                                                                                                                                                                                                                                                                                                                                                                                                                                                                                                                                                                                                                                                                                                                                                                                                                                                                                                                                                                                                                                                                                                                                                                                                                                                                                                                                                                                                                                                                                                                                                                                                                                                                                                                                                                                                                                                                                                                                                                                                                                                                                                                                                                                                                                                                                                                                                                                                                                                                                                                                                                                                                                                                                                                                                                                                                                                                                                                                                                                                                                                                                                                                                                                                                                                                                                                                                                                                                                                                                                                                                                                                                                                                                                                                                                                                                                                                                                                                                                                                                                                                                                                                                                                                                                                                                                                                                                                                                                                                                                                                                                                                                                                                                                                                                                                                                                                                                                                                                                                                                                                                                                                                                                                                                                                                                                                                                                                                                                                                                                                                                                                                                                                                                                                                                                                                                                                                                                                                                                                                                                                                                                                                                                                                                                                                                                                                                                                                                                                                                                                                                                                                                                                                                                                                                                                                                                                                                                                                                                                                                                                                                                                                                                                                                                                                                                                                                                                                                                                                                                                                                                                                                                                                                                                                                                                                                                                                                                                                                                                                                                                                                                                                                                                                                                                                                                                                                                                                                                                                                                                                                                                                                                                                                                                                                                                                                                                                                                                                                                                                                                                                                                                                                                                                                                                                                                                                                                                                                                                                                                                                                                                                                                                                                                                                                                                                                                                                                                                                                                                                                                                                                                                                                                                                                                                                                                                                                                                                                                                                                                                                                                                                                                                                                                                                                                                                                                                                                                                                                                                                                                                                                                                                                                                                                                                                                                                                                                                                                                                                                                                                                                                                                                                                                                                                                                                                                                                                                                                                                                                                                                                                                                                                                                                                                                                                                                                                                                                                                                                                                                                                                                                                                                                                                                                                                                                                                                                                                                                                                                                                                                                                                                                                                                                                                                                                                                                                                                                                                                                                                                                                                                                                                                                                                                                                                                                                                                                                                                                                                                                                                                                                                                                                                                                                                                                                                                                                                                                                                                                                                                                                                                                                                                                                                                                                                                                                                                                                                                                                                                                                                                                                                                                                                                                                                                                                                                                                                                                                                                                                                                                                                                                                                                                                                                                                                                                                                                                                                                                                                                                                                                                                                                                                                                                                                                                                                                                                                                                                                                                                                                                                                                                                                                                                                                                                                                                                                                                                                                                                                                                                                                                                                                                                                                                                                                                                                                                                                                                                                                                                                                                                                                                                                                            extend the result of [Ibid.] to the case when $V_j$ are reduced (cf.~Proposition \ref{promm100}).}
\end{remark}







\begin{thebibliography}{99}

\bibitem{berthelot2006theorie}
P. Berthelot et al,
  {\em Theorie des Intersections et Theoreme de Riemann-Roch: Seminaire
  de Geometrie Algebrique du Bois Marie 1966/67 (SGA 6)}, Lecture Notes Math., 
  Vol. 225, Springer, New York (2006).
  
  


\bibitem{boucksom2004pseudo}
S. Boucksom, et al.:
 The pseudo-effective cone of a compact K\"ahler manifold and varieties of negative Kodaira dimension,
 {\em J. Algebr. Geom.}, Vol. \textbf{22}(2013), no. 2, 201-248, \href{https://arxiv.org/abs/math/0405285}{ arXiv:0405285}.
  



\bibitem{brown2016geometric}
M. Brown, J. McKernan, R. Svaldi, and H. Zong,
  A geometric characterisation of toric varieties,
  {\em Duke Math. J.}, Vol. \textbf{167}(2018), no. 5, 923-968, \href{https://arxiv.org/abs/1605.08911}{ arXiv:1605.08911}.
  
  
\bibitem{horing2014singularities}
A. Broustet and A. H\"oring, Singularities of varieties admitting an endomorphism, \textit{Math. Ann.}, Vol. \textbf{360}(2014), no. 1-2, 439--456.
  
\bibitem{campana1991twistor}
F. Campana,
  On twistor spaces of the class $\mathscr{C}$,
{\em  J. Differ. Geom.}, Vol. \textbf{33}(1991), no. 2, 541--549.
  

\bibitem{cascini2017polarized}
P. Cascini, S. Meng and  D.-Q. Zhang, Polarized endomorphisms of normal projective threefolds in arbitrary characteristic, \textit{Math. Ann.} (to appear),  \href{https://arxiv.org/abs/1710.01903}{ arXiv:1710.01903}, 2017.




\bibitem{fakhruddin2002questions}
N. Fakhruddin,
  Questions on self maps of algebraic varieties,
{\em J. Ramanujan Math. Soc.}, Vol. \textbf{18}(2003), no. 2, 109–122,
 \href{https://arxiv.org/abs/math/0212208}{ arXiv:math/0212208}.









\bibitem{fulger2016volume}
M. Fulger, J. Koll\'ar, and B. Lehmann, Volume and {H}ilbert function of {$\Bbb R$}-divisors, {\em Mich. Math. J.}, Vol. \textbf{65}(2016), no. 2, 371--387.

\bibitem{fulton2013intersection}
W. Fulton, {\em Intersection Theory},  Ergeb. Math. Grenzgeb. (3), Vol. 2, Springer-Verlag, 1984.


\bibitem{hartshorne2013algebraic}
R. Hartshorne,
  {\em Algebraic Geometry}, Graduate Texts Math., Vol. \textbf{52},
  New York: Springer-Verlag, 2013.



\bibitem{hatcher2002algebraic}
A. Hatcher,
  {\em Algebraic Topology}, Cambridge: Cambridge University Press, 2000.

\bibitem{iitaka1982algebraic}
S. Iitaka,
  {\em Algebraic Geometry--An introduction to Birational Geometry of
  Varieties},
  Graduate Texts Math., Vol. \textbf{76}, New York: Springer-Verlag, 1982.
  
\bibitem{kawakita2007inversion}
M. Kawakita,
  Inversion of adjunction on log canonicity,
  {\em Invent. Math.}, Vol. \textbf{167}(2007), no. 1, 129--133, \href{https://arxiv.org/abs/math/0511254}{ arXiv:0511254}.

\bibitem{kawamata1987introduction}
Y. Kawamata, K. Matsuda and K. Matsuki,
  Introduction to the minimal model problem,
  {\em Adv. Stud. Pure Math.}, Vol. \textbf{10}(1987), 283--360.

\bibitem{kollar1992flips}
J. Koll{\'a}r et al,
  Flips and Abundance for Algebraic Threefolds,
  {\em Lectures in Utah Summer Seminar 1991, Asterisque}, Vol. 211, Societe  Mathematique de France, Paris (1992).


\bibitem{kollar1992rational}
J. Koll{\'a}r, Y. Miyaoka and S. Mori,
  Rational connectedness and boundedness of fano manifolds,
  {\em J. Differ. Geom.}, Vol. \textbf{36}(1992), no.3, 765--779.

\bibitem{kollar2008birational}
J. Koll{\'a}r and S. Mori,
  {\em Birational geometry of algebraic varieties}, Cambridge Tracts in Math.,  Vol. \textbf{134},  Cambridge: Cambridge Univ. Press, 1998.

\bibitem{kollar1999rational}
J. Koll{\'a}r,
  {\em Rational curves on algebraic varieties}, Ergeb. Math. Grenzgeb. (3), Vol. \textbf{32},
New York: Springer-Verlag, 1999.


\bibitem{kollar2013singularity}
J. Koll\'ar, {\em Singularities of the minimal model program}, Cambridge Tracts in Mathematics, Vol. \textbf{200}, With a collaboration of S\'{a}ndor Kov\'{a}cs, Cambridge University Press, Cambridge, 2013.



\bibitem{krieger2015cohomological}
H. Krieger and P. Reschke,
  Cohomological conditions on endomorphisms of projective varieties,
 {\em Bull. Soc. Math. Fr.}, Vol. \textbf{145}(2017), no.3, 449--468, \href{https://arxiv.org/abs/1505.07088}{ arXiv:1505.07088}.



\bibitem{lazarsfeld2017positivity}
R.K. Lazarsfeld,
  {\em Positivity in Algebraic Geometry I: Classical Setting: Line
  Bundles and Linear Series}, Ergeb. Math. Grenzgeb. (3), Vol. \textbf{48}, Springer, New York (2004).


\bibitem{meng2017building}
S. Meng,
  Building blocks of amplified endomorphisms of normal projective
  varieties, {\em Math. Z.} (to appear), \href{https://arxiv.org/abs/1712.08995}{ arXiv:1712.08995}, 2017.


\bibitem{meng2018building}
S. Meng and D.-Q. Zhang,
  Building blocks of polarized endomorphisms of normal projective
  varieties,
  {\em Adv. Math.}, Vol. \textbf{325}(2018), 243--273, \href{https://arxiv.org/abs/1606.01345v4}{ arXiv:1606.01345v4}.




\bibitem{meng2018semi}
S. Meng and D.-Q. Zhang,
  Semi-group structure of all endomorphisms of a projective variety
  admitting a polarized endomorphism,
 {\em Math. Res. Lett.} (to appear), \href{https://arxiv.org/abs/1806.05828}{ arXiv:1806.05828}, 2018.
 
\bibitem{mumford2008abelian}
D. Mumford, {\em Abelian Varieties}, With Appendices by C. P. Ramanujam and Yuri Manin, Corrected
Reprint of the Second (1974) Edition, Tata Institute of Fundamental Research Studies in  Mathematics,
Vol. \textbf{5}, Published for the Tata Institute of Fundamental Research, Bombay, 2008.

\bibitem{nakayama2004zariski}
N. Nakayama,
  {\em Zariski-Decomposition and Abundance}, MSJ Mem., Vol. \textbf{14}, Jpn, Tokyo (2004).

\bibitem{nakayama2008complex}
N. Nakayama,
  On complex normal projective surfaces admitting non-isomorphic
  surjective endomorphisms, Preprint 2 September 2008.

\bibitem{nakayama2010polarized}
N. Nakayama and D.-Q. Zhang,
  Polarized endomorphisms of complex normal varieties,
  {\em Math. Ann.}, Vol. \textbf{346}(2010), no.4, 991--1018, \href{https://arxiv.org/abs/0908.1688v1}{ arXiv:0909.1688v1}.
  

\bibitem{poonen2017rational}
B. Poonen,
  {\em Rational Points on Varieties}, Graduate Studies in Mathematics, Vol. 186, American Mathmatical Society, Providence (2017).


\bibitem{shokurov91}
V.V. Shokurov, $3$-fold log flips,  Izn. A. N. Ser. Mat. Vol. \textbf{56}(1992), 105-203 (in Russian).

\bibitem{takayama2003local}
S. Takayama,
  Local simple connectedness of resolutions of log-terminal
  singularities,
  {\em Int. J. Math.}, Vol. \textbf{14}(2003), no.8, 825--836.





\bibitem{zhang2014invariant}
D.-Q. Zhang,
Invariant hypersurfaces of endomorphisms of projective varieties,
 {\em Adv. Math.}, Vol. \textbf{252}(2014), 185--203, \href{https://arxiv.org/abs/1310.5944}{ arXiv:1310.5944}.


\bibitem{zhang2016action}
D.-Q. Zhang, $n$-dimensional projective varieties with the action of an abelian group of rank $n-1$, {\em Trans. Amer. Math. Soc.}, Vol. \textbf{368}(2016), no.12, 8849-8872.
\href{https://arxiv.org/abs/1412.5779}{ arXiv:1412.5779}.

\bibitem{zhang2006rational}
Q. Zhang, Rational connectedness of log $q$-fano varieties,
 {\em J. Reine Angew. Math.}, Vol. \textbf{590}(2006), 131--142, \href{https://arxiv.org/abs/math/0408301}{ arXiv:0408301}.






\end{thebibliography}
\end{document}